\newtheorem{theorem}{Theorem}[section]
\newtheorem{lemma}[theorem]{Lemma}
\newtheorem*{lemma*}{Lemma}
\newtheorem{proposition}[theorem]{Proposition}
\newtheorem{corollary}[theorem]{Corollary}
\theoremstyle{definition}
\newtheorem{definition}[theorem]{Definition}
\newtheorem{question}[theorem]{Question}
\theoremstyle{remark}
\newtheorem{remark}[theorem]{Remark}
\numberwithin{equation}{section}
\newcommand{\abs}[1]{\lvert#1\rvert}
\newcommand{\C}{\mathbb{C}}
\newcommand{\W}{\mathscr{W}}
\newcommand{\onto}{\xrightarrow[]{{}_{\!\!\textnormal{onto\,\,}\!\!}}}
\newcommand{\into}{\xrightarrow[]{{}_{\!\!\textnormal{into\,\,}\!\!}}}
\newcommand{\bydef}{\stackrel {\textnormal{def}}{=\!\!=} }
\DeclareMathOperator{\re}{Re}
\DeclareMathOperator{\im}{Im}
\begin{document}

\title{The Dirichlet Principle for Inner Variations}

\author[T. Iwaniec]{ Tadeusz Iwaniec}
\address{Department of Mathematics, Syracuse University, Syracuse,
NY 13244, USA}
\email{tiwaniec@syr.edu}

\author[J. Onninen]{Jani Onninen}
\address{Department of Mathematics, Syracuse University, Syracuse,
NY 13244, USA and Department of Mathematics and Statistics, P.O.Box 35 (MaD) FI-40014 University of Jyv\"askyl\"a, Finland}
\email{jkonnine@syr.edu}
\thanks{ T. Iwaniec was supported by the NSF grant DMS-1802107.
J. Onninen was supported by the NSF grant DMS-1700274.}

\subjclass[2010]{Primary 31A05; Secondary  30G20, 35J25}


\keywords{Hopf-Laplace equation, Holomorphic quadratic differentials, Monotone mappings, harmonic mappings, the principle of non-interpenetration of matter}

\maketitle

\begin{abstract}
 We are  concerned with the Dirichlet energy of mappings defined on domains in the complex plane. The motivation behind our questions, however, comes from more general energy integrals of mathematical models of {\it Hyperelasticity}. The  Dirichlet Principle, the name coined by Riemann,  tells us that the \textit{outer variation} of a harmonic mapping increases its energy. Surprisingly, when one jumps into details about \textit{inner variations}, which are just a change  of independent variables, new equations and related questions start to matter. The inner variational equation, called the \textit{Hopf Laplace equation},  is no longer the Laplace equation. Its  solutions are generally not harmonic; we refer to them as \textit{Hopf harmonics}. The natural question that arises is how does a change of variables in the domain of a Hopf harmonic map affect its energy? We show, among other results,  that in case of a simply connected domain the energy increases. This should be viewed as Riemann's Dirichlet Principle for Hopf harmonics.

 The Dirichlet Principle for Hopf harmonics in domains of higher connectivity is not completely solved.   What complicates the matter is the insufficient knowledge of  global structure of trajectories of the associated Hopf quadratic differentials, mainly because of the presence of recurrent trajectories.  Nevertheless, we have established the Dirichlet Principle whenever the Hopf differential admits closed trajectories and crosscuts. Regardless of these assumptions, we established
 the so-called \textit{Infinitesimal Dirichlet Principle} for  all domains and all Hopf harmonics. Precisely, the second order term of inner variation of a Hopf harmonic map is always nonnegative.

 The topics presented in this paper open new directions toward mathematical foundations of Hyperelasticity. In particular, the use of quadratic differentials in the context of hyperelasticity should appeal to both mathematical analysts and researchers in the engineering fields.

\end{abstract}


\section{Introduction}

\subsection{Motivation} Before embarking upon the results, let us consider arbitrary bounded domains  $\mathbb X \,$ and $\,\mathbb Y\,$ in $\,\mathbb R^n\,$. We shall actually investigate  in detail only the case $\,n= 2\,$. Although the $\,n\,$-dimensional Riemannian manifolds are not in the center of our investigation, the ideas really crystalize in a diferential-geometric setting. Thus we suggest, as a possibility, to think of $\mathbb X\,$ and $\,\mathbb Y\,$ as Riemannian $\,n\,$-manifolds or surfaces when $\,n=2\,$.
The subject matter is about Sobolev mappings $\, h\colon \mathbb X\,\rightarrow \mathbb Y\,$ of class $\,\mathscr W^{1,p}_{\textnormal{loc}}(\mathbb X, \mathbb R^n)\,,\, 1 \leqslant p \leqslant \infty\,$. The chief part of this paper is highly motivated by the mathematical models of Nonlinear Elasticity (NE) originated in \cite{Antman, Ball1, Ball2, Ciarlet, MarsdenHughes}.  The reference configuration $\,\mathbb X\,$, the deformed configuration $\,\mathbb Y\,$,  and the elastic deformation, usually a homeomorphism $\,h \colon  \mathbb X  \onto \mathbb Y\,$, thus named,  have a well defined linear tangent map $\,Dh : \textnormal T_x\mathbb X  \rightarrow \textnormal T_y\mathbb Y\, ,\, y = h(x)\,$, at almost every point $\,x \in \mathbb X\,$, called a \textit{deformation gradient}. In the Euclidean setting $\,Dh\,$ is just a measurable function on $\,\mathbb X\,$ whose values are $\, n \times n\,$-matrices, so we write $\,Dh(x)  \in \mathbb R^{n\times n}\,$. The adjoint differential $\,D^*h(x) : \textnormal T_y\mathbb Y  \rightarrow \textnormal T_x\mathbb X\,$, represented by the transpose matrix  of $Dh(x)$, gives rise to the Hilbert-Schmidt norm $\, |Dh| \bydef \sqrt{\textnormal{Tr} (D^*h\cdot Dh)} = \sqrt{\langle Dh | Dh\rangle} \,$.

The theory of hyperelasticity is concerned with the  \textit{stored energy}, usually defined for Sobolev homeomorphisms $\,h : \mathbb X \onto \mathbb Y\,$ and their weak limits:
\begin{equation}\label{EnergyIntegral}
\mathscr E[h]  =  \int_\mathbb X \textbf{E}(x,h, Dh) \, \textbf{d} x \; < \infty\,,
\end{equation}
for the purpose of determining its infimum.  The major player is the Jacobian determinant $\,J_h(x) = J(x,h) = \textnormal{det}\,Dh(x)\,$ which is often assumed to be nonnegative in order to comply with so-called  \textit{Principle of Non-Interpenetration of Matter}~\cite{Ball2, Ball3, BPO1, Ciarlet,  CL, IOhy, IOinv}. Accordingly, it is energetically impossible to compress part of the hyperelastic body to zero volume; the Jacobian must be positive.

It is a persistent misconception that the energy-minimal homeomorphisms  must satisfy the Lagrange-Euler equation. Whereas, upon a little reflection on the \textbf{outer variation}
$$\,h_\varepsilon(x) \bydef  h(x) + \varepsilon \,\eta(x) \;, \; \textnormal{with} \;\,\eta \in \mathscr C^\infty_0 (\mathbb X, \mathbb R^n), \,$$  such a view becomes well out of reality. The variations $\,h_\varepsilon\,$ are generally not  homeomorphisms of $\,\mathbb X \onto\,\mathbb Y\,$ and, even more,  the Jacobian may change sign. This being so, one quickly runs into
serious difficulty when trying to apply the \textit{Direct Method in the Calculus of Variations} by passing to a weak limit of an
energy-minimizing sequence of Sobolev homeomorphisms; injectivity is lost. That is why, one must accept limits of homeomorphisms as legitimate hyperelastic deformations~\cite{IOmono, IOws}. Besides these concerns, even if such a limit possesses the least energy it is not generally possible to write down a Lagrange-Euler equation for the minimal mapping. An immediate example is the \textit{Neo-Hookean energy}:
\begin{equation}
\mathsf E_q^p[h] = \int_\mathbb X \left (| Dh(x) |^p  \,+\; \left [J_h(x)\right]^{-q} \right )\; \textnormal d x\;,\;\; 1< p < \infty\;,\; q >0 .
\end{equation}
which does not authorize to use outer variations. But it  allows for the inner variations.
\begin{definition}
 By the (total) \textit{inner variation}  of $\,h : \mathbb X \to \mathbb R^n\,$ we mean a family of mapping $\,h_\phi: \mathbb X \into \mathbb R^n\,$,  $\,h_\phi(x) \bydef h(\phi(x))\,$,  in which $\,\phi : \overline{\mathbb X }\onto \overline{\mathbb X}\,$ are $\,\mathscr C^\infty\,$-diffeomorphisms, referred sometimes as change of variables in $\, \mathbb X\,$.
 \end{definition}

One of the reasons why the inner variations are  advantageous over outer variations is that $\,h_\phi(\mathbb X) = h(\mathbb X)\,$.
 Although in this most general setting we do not prescribe the boundary values of $\,h\,$, its boundary behavior is still involved via the assumption $\, h(\mathbb X) = \mathbb Y\,$. In nonlinear elasticity \cite{Ball1, Ball3, Ball4, Ciarlet, CN} this is called  \textit{frictionless problem} as it allows for ``tangential slipping" along the boundary.  One can realize it physically by deforming an incompressible material confined in a box.
 In the \textit{Geometric Function Theory} (GFT)~\cite{AIMb, CIKO, Ge, GMPb, HKb, IMb, IOne, IOan, IOlatem}, on the other hand, the frictionless deformations naturally occur  in generalizing Riemann's Mapping Theorem,  where prescribing the boundary values of $\,h\,$ is an ill posted problem.

 Minimization of the energy (\ref{EnergyIntegral}), subject to frictionless deformations,  leads to a \textit{variational equation} on $\,\mathbb X\,$ and additional equations on $\,\partial \mathbb X\,$, see e.g.~\cite{IKOLip, IOan}. In order to cover the boundary value problems as well, we shall confine ourselves to diffeomorphisms $\,\phi : \overline{\mathbb X }\onto \overline{\mathbb X}\,$ that are equal to the identity map on $\,\partial \mathbb X\,$. It will simplify the arguments and cause no loss of generality to assume that $\,\phi(x) \equiv x\,$ near $\,\partial \mathbb X\,$.
  Thus, we choose and fix a test function $\,\eta \in \mathscr C^\infty_0(\mathbb X, \mathbb R^n)\,$. For all sufficiently small $\varepsilon \in \mathbb R\,$ the mappings $\,  \phi(x) \bydef  x + \varepsilon \eta(x)\,$ are diffeomorphisms of $\,\overline{\mathbb X}\,$ onto itself.
 \begin{definition} \label{DefinitionInnerVariation}The (internal or local) inner variation of $\,h \,$  is defined by
  \begin{equation}\label{NotationInnerVariation}\,h^\varepsilon(x)  \bydef h(x + \varepsilon \eta(x))\,,\; \textnormal{where}\;\eta\in \mathscr C^\infty_0(\Omega)\;\;\textnormal{and} \;\varepsilon \in \mathbb R .
  \end{equation}
  Here the parameter $\,\varepsilon\,$ is small enough to ensure the Jacobian condition:
  \begin{equation}\label{JacobianCondition}\, \det [ I \,+\, \varepsilon\, D\eta ] \,> 0\,,\; \textnormal{everywhere in}\,\,\Omega .
  \end{equation}
  \end{definition}
Clearly,  if $\,h\,$ is an energy-minimal deformation among all inner variations, then it satisfies the  so-called \textit{inner variational equation}:
 \begin{equation}\label{InnerEquation}
 \frac{\textnormal d}{\textnormal {d} \varepsilon}\,\mathscr E[h^\varepsilon] \,\Big |_{\varepsilon = 0\,} \, =  \,0 \,, \,\; \textnormal{for all} \;\,\eta \in \mathscr C^\infty_0(\mathbb X, \mathbb R^n) .
 \end{equation}

It is generally a highly nontrivial question whether the converse holds; and this is our primary question that we address in this paper.
\vskip0.2cm
\begin{tabular}{|p{11.3cm}|}
\hline \vskip0.1cm
\begin{question}[General Dirichlet Principle]\label{GeneralDirichletPrinciple}
Suppose that a mapping $\,h :\mathbb X \onto \mathbb Y\,$ of finite energy  at \eqref{EnergyIntegral} solves the equation (\ref{InnerEquation}). Does every inner variation of $\,h\,$ increase its energy? Precisely, is it true that $\,\mathscr E[h] \leqslant \mathscr E[h^\varepsilon]$?
\end{question}
$\;$
  \\\hline
\end{tabular}\\

Inner-variational equations are also known as  \textit{energy-momentum} or \textit{equilibrium} equations, etc~\cite{Cob, SSe, Ta}. In recent studies there has been an intense exploration of the inner variations. Applications are plentiful and quite significant. For example, in the study of the regularity of energy-minimal mappings the unavailability of the Lagrange-Euler equation is a major source of difficulties. Such a difficulty is well recognized in the theory of nonlinear elasticity~\cite{Ba13, Ba12, SS}. In different circumstances,  a deeper understanding of the \textit{Hopf-Laplace equation}, see formula (\ref{Hopf-Laplace-Equation}) below, helped us to gain Lipschitz regularity of solutions (not necessarily energy-minimal) of a wide class of conformally invariant equations\,\cite{IKOLip}.\\
 Question \ref{GeneralDirichletPrinciple},  as posed in such a generality,  seems to be over-committed at the current stage of developments. That is why in this paper we  undertake a detailed study of the Dirichlet energy in the planar domains. The use of complex methods (quadratic differentials in particular) are encouraging enough to merit such investigation.

\subsection{Planar Dirichlet Energy}
 From now on  $\,h \colon \Omega \rightarrow \mathbb C\,$ is a Sobolev mapping of class  $\,\mathscr W^{1,2}(\Omega)\,$ defined on a domain $\,\Omega \subset \mathbb C\,$ in the complex plane $\,\mathbb C = \{\, z = x + i y \colon  x,y \in \,\mathbb R\}\,$, which we dress   with d'Alambert's complex derivatives.
$$
 h_z  = \frac{\partial h}{\partial z} = \frac{1}{2} \left( \frac{\partial}{\partial x} - i \frac{\partial}{\partial y} \right)h \;\;\;\textnormal {and}\;\;\;
h_{\bar z} = \frac{\partial h}{\partial \bar z}  = \frac{1}{2} \left( \frac{\partial}{\partial x} + i \frac{\partial}{\partial y} \right)h \, .
$$
In this notation the Dirichlet energy takes the form:
$$
\mathscr E[h] \; \bydef  \,\frac{1}{2}\,\int_\Omega \abs{Dh(x,y)}^2\, \textnormal dx\, \textnormal d y   \,=\,  \int_\Omega \left (|\,h_z(z) \,|^2 \;+\, |h_{\bar z} (z)\,|^2\,\right) \, \textbf{d} z \, .
$$
 Hereafter $\,\textbf{d} z\,$ stands for the area element in $\,\mathbb C\,$, $\,\textbf{d} z = \textnormal dx\, \textnormal d y \, =  \frac{i}{2} \,\textnormal d z \wedge \textnormal d\bar{z}$.

\subsection{Dirichlet Principle}
  Historically, the existence of the energy-minimal solutions was hinged on physical interpretations. This was taken for granted (until Karl Weierstrass' constructed a counter-example) by numerous eminent mathematicians, including Bernhard Riemann who actually coined the term \textit{Dirichlet's Principle}.  Let us encapsulate this principle as:
  \vskip0.2cm
  \begin{tabular}{|p{11.3cm}|}
\hline \vskip0.1cm
\begin{center} \textbf{Riemann's Dirichlet Principle} \end{center}
  A function $\,h\in \mathscr W^{1,2}(\Omega) \,$ solves the Laplace equation
  \begin{equation} \label{LaplaceEquation}
  h_{z\bar{z}} \;= \,\frac{\partial h_z}{\partial \bar z}\,  \equiv 0\,\;\;\,\;\; \textnormal{(in the sense of distributions)}
  \end{equation}
  if and only if  its \textit{outer variations} increase the energy.  \\
$\;$
  \\\hline
\end{tabular}\\

\subsubsection{Outer Variation}

 Recall that the term outer variation of $\, h : \Omega   \rightarrow \mathbb C\,$ refers to a one parameter family $\,\{h^\varepsilon\,\}_{\varepsilon \in \mathbb R}\,$ of mappings $\,h^\varepsilon : \Omega   \rightarrow \mathbb C\,$  defined by the rule:
 \begin{equation}\label{OuterVariation}
 h^\varepsilon (z) = h(z) \,+ \,\varepsilon \,\eta(z)\, ,\;\;\textnormal {where}\;\;\eta \in \mathcal C^\infty_0 (\Omega).
 \end{equation}
 The energy of $\,h^\varepsilon\,$ is a quadratic polynomial in $\,\varepsilon$.
\begin{equation}\label{OuterVariationOfEnergies}
\mathscr E[h^\varepsilon] \; = \; \mathscr E[h]  - 4 \varepsilon \re \int\; \overline{\eta}\, \frac{\partial  h_z}{\partial \bar z}\, \textbf d z\,  \, \; +\; \varepsilon^2 \int \left ( | \eta_z |^2 \,+ | \eta_{\bar z} |^2  \right )\,\textbf d z \, .
\end{equation}
The Dirichlet Principle is now readily inferred from the first order power term by letting $\,\varepsilon\,$ go to zero. Since the test functions $\,\eta\,$ assume complex values, we conclude that  $\,h\,$ is harmonic if and only if

\begin{itemize}
\item[(A)]\quad\quad\quad$\,\frac{\textnormal d}{\textnormal {d} \varepsilon}\,\mathscr E[h^\varepsilon] \,\Big |_{\varepsilon = 0\,} \, =  \,0 \,$, \quad \textnormal {for all$\,\;\eta \in\mathscr C^\infty_0(\Omega)\,$}.
\end{itemize}

Now, having this equation, the second order power term (named \textit{second outer variation}) turns out to be nonnegative,
\begin{itemize}
\item [(B)] \quad\quad\quad $\,\frac{\textnormal d^2}{\textnormal {d} \varepsilon^2}\,\mathscr E[h^\varepsilon] \,\Big |_{\varepsilon = 0\,}\; \geqslant  \;0$\, , \quad for all $\,\eta \in\mathscr C^\infty_0(\Omega)\,$
    \end{itemize}

This inequality actually holds for every parameter $\,\varepsilon\,$, so we have
\begin{itemize}
    \item [(C)]\quad\quad\quad $\mathscr E[h] \, \leqslant \mathscr E[h + \eta]\,\,, \quad \textnormal{for all}\, \,\eta \in \mathscr C^\infty_0 (\Omega)\,.$
\end{itemize}
The equality occurs if and only if $\,\eta \equiv 0\,$.

\vskip0.1cm
\subsubsection{Inner Variation}

Let  $\,h \colon \Omega \to \C\,$ be a mapping of Sobolev class $\W^{1,2} (\Omega)\,$ in a domain $\,\Omega \subset \C\,$. Where it is important to distinguish different meanings of $\,\Omega\,$, one as the domain of definition of the variation and the other as its image,  we designate different letters $\,z\,$ and  $\xi\,$ for the notation of the variables in $\,\Omega\,$. Accordingly,  $\,h= h(\xi)\,$, where   $\,\xi\,$ can also be viewed as $\,\mathscr C^\infty\,$-smooth diffeomorphism of $\,\xi : \Omega \onto \Omega\,$. Precisely $\, \xi(z) = z + \psi(z)\,$, where $\, \psi \in \mathscr C_\circ^\infty (\Omega)$ satisfies the positive Jacobian condition:
\begin{equation} \label{admissibleDeformations}
J_\xi (z)= \abs{\xi_z(z)}^2 -  \abs{\xi_{\bar z}(z)}^2 >0 \qquad \textnormal{for all } z \in \Omega \, .
\end{equation}

Implicit Function Theorem and topological degree arguments combined reveal that $\,\xi : \Omega \onto \Omega\,$ has an inverse, also denoted by $\, z \colon \Omega \onto \Omega\,$, thus  $\,z = z(\xi)\,$.  Both diffeomorphisms $\,\xi \colon \Omega \onto \Omega\,$ and $\,z \colon \Omega \onto \Omega\,$ are understood as change of variables in $\,\Omega\,$.
\begin{definition}[The Total Inner Variation]\label{InnerVariationDefinition} Recall that the term total inner variation of a function $\,h : \Omega \to \mathbb C\,$ refers to any function $\,H = H(z)\,$ defined by the rule:
\begin{equation}\label{InnerVariation}
H(z) = h(\xi(z)) \,\,,\;\; \textnormal{for}\; \,z \in \Omega\,
\end{equation}
 where $\;\xi = \xi(z)\,$ is any diffeomorphism of $\,\Omega\,$ onto itself.
 \end{definition}
In Section \ref{Section3} we inaugurate the following general formula:
\begin{equation}\label{ExpansionFormula}
\begin{split}
&\mathscr E [H] \,-\, \mathscr E [h]= \\&  \int_\Omega \left (|\,H_z(z) \,|^2 \,+\, |H_{\bar z} (z)\,|^2\,\right)  \textbf{d} z
 \;- \int_\Omega \left (|\,h_\xi(\xi) \,|^2 \,+\, |h_{\bar \xi} (\xi)\,|^2\,\right)  \textbf{d} \xi\\& = 2 \int_\Omega \left (|\,h_\xi(\xi) \,|^2 \,+\, |h_{\bar \xi} (\xi)\,|^2\,\right) \frac{|z_{\bar{\xi}}|^2}{|z_\xi|^2 - |z_{\bar{\xi}}|^2}\, \textbf{d} \xi\\&
  -4  \re \int_\Omega h_\xi \overline{h_{\bar \xi}} \;\; \frac{z_{\bar{\xi}}\,\, \overline{z_\xi}}{|z_\xi|^2 - |z_{\bar{\xi}}|^2}\, \textbf{d} \xi
\end{split}
\end{equation}
Hereafter, the differential expression
\begin{equation}
\mathcal  H = \mathcal H(\xi) \;=\; h_\xi \overline{h_{\bar \xi}}\;\;\;\; \textnormal{is called  \textit{Hopf product.}}
\end{equation}
This name is given in recognition of Heinz Hopf's work, see his book \cite[Ch. VI]{HopfBook}.

It is immediate from (\ref{ExpansionFormula}) that:
\begin{corollary}[The borderline Case]\label{theSingularCase}
If $\,\mathcal H \equiv 0\,$ almost everywhere, then no inner variation of $\,h\,$ decreases its energy; in symbols,\;$\,\mathscr E[h] \leqslant \mathscr E[H]\,$.
\end{corollary}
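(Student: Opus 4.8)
The plan is to read the inequality straight off the expansion formula (\ref{ExpansionFormula}), which we take as already established in Section \ref{Section3}. Setting the Hopf product $\,\mathcal H = h_\xi\overline{h_{\bar\xi}} \equiv 0\,$ almost everywhere, I first observe that the second integral on the right-hand side of (\ref{ExpansionFormula}) carries $\,h_\xi\overline{h_{\bar\xi}}\,$ as a factor in its integrand, and therefore vanishes identically. This collapses the energy difference to the single surviving term
\[
\mathscr E[H] - \mathscr E[h] \;=\; 2\int_\Omega \left(|h_\xi(\xi)|^2 + |h_{\bar\xi}(\xi)|^2\right)\frac{|z_{\bar\xi}|^2}{|z_\xi|^2 - |z_{\bar\xi}|^2}\,\textbf{d}\xi .
\]

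Next I would check that this remaining integrand is pointwise nonnegative. The factor $\,|h_\xi|^2 + |h_{\bar\xi}|^2\,$ and the numerator $\,|z_{\bar\xi}|^2\,$ are both manifestly $\,\ge 0\,$. For the denominator, recall that $\,z\colon\Omega\onto\Omega\,$ is the inverse of the admissible diffeomorphism $\,\xi\,$, which satisfies the positive Jacobian condition (\ref{admissibleDeformations}); since the inverse of an orientation-preserving diffeomorphism is again orientation-preserving, the Jacobian $\,J_z = |z_\xi|^2 - |z_{\bar\xi}|^2\,$ stays strictly positive throughout $\,\Omega\,$. Hence the whole integrand is $\,\ge 0\,$ almost everywhere, the integral is $\,\ge 0\,$, and we conclude $\,\mathscr E[h] \le \mathscr E[H]\,$.

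The only point that calls for a word of justification is the strict positivity $\,J_z > 0\,$, and even this is immediate: differentiating the identity $\,\xi(z(\xi)) = \xi\,$ and passing to Jacobians yields $\,J_z(\xi)\,J_\xi(z(\xi)) = 1\,$, so that positivity of $\,J_\xi\,$ forces positivity of $\,J_z\,$. Beyond this, no genuine obstacle arises; no boundary analysis, no regularity theory, and no structure of the quadratic differential enter the argument, which is exactly why the vanishing of $\,\mathcal H\,$ makes the conclusion a direct consequence of (\ref{ExpansionFormula}).
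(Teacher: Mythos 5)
Your argument is exactly the paper's: the authors also specialize formula (\ref{ExpansionFormula}) to $\,\mathcal H\equiv 0\,$, observe that only the first integral survives, and note its integrand is nonnegative (see the identity (\ref{ReducedIdentity}) in Section \ref{SingularCase}). Your extra remark justifying $\,J_z>0\,$ via $\,J_z(\xi)\,J_\xi(z(\xi))=1\,$ is a correct elaboration of a point the paper already takes for granted in (\ref{admissibleDeformations}) and the surrounding discussion.
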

 For the equality  and for further discussion of this case  see Section \ref{SingularCase}.
\subsection{First and Second Order Terms of the Inner Variations}
 Choose and fix  an arbitrary complex valued function $\,\eta = \eta(\xi)\,$ of class $\,\mathscr C^\infty_0(\Omega)\,$. For sufficiently small $\,\varepsilon \in \mathbb R\,$ the mapping $\, z = z(\xi) = \xi + \varepsilon \eta(\xi)\,$ represents a diffeomorphic change of variables in $\, \Omega\,$.
 \begin{definition}[The Range of $\,\varepsilon\,$]\label{RangeOfEpsilon}
 Given $\,\eta = \eta(\xi)\,\in \mathscr C^\infty_0(\Omega)\,$, the largest positive number $\, \varepsilon_{\textnormal{max}}\,$ for which
 the mappings $\, z = z(\xi) = \xi \pm \varepsilon \eta(\xi)\,$, with $\,0 < \varepsilon < \varepsilon_{\textnormal{max}}\,$ are diffeomorphisms will hereafter be referred to as the maximal variational parameter. Certainly, $\,\varepsilon_{\textnormal{max}}\,$  depends on the choice of the test function $\,\eta \,\in \mathscr C^\infty_0(\Omega)\,$; for convenience we ignore this dependence.
 \end{definition}
 This just amounts to the inequality
 \begin{equation}
 J_z(\xi)  = | 1\, \pm \,\varepsilon \,\eta_{_\xi} (\xi)\,| ^2 \; -  \varepsilon^2\, | \eta_{_{\bar \xi}} (\xi)\,|^2 \, > 0 \;,\;\; \textnormal{for all}\; \xi \in \Omega
 \end{equation}
 whenever $\,0 < \varepsilon < \varepsilon_{\textnormal{max}}\,$.

 Our ultimate goal is to expand formula (\ref{ExpansionFormula}) in powers of $\,\varepsilon\,$.  Therefore, we consider a one parameter family of inner variations of $h$,  defined for sufficiently small $\,\varepsilon \in \mathbb R\,$ by formula  (\ref{InnerVariation}). Equivalently,
 \begin{equation}
 H_\varepsilon(z)  \bydef  h(\xi) \;, \;\; \textnormal{where}\; z = z(\xi) \bydef \xi + \,\varepsilon \,\eta(\xi)\;\;\textnormal{with}\;\; \xi \in \Omega .
 \end{equation}
 This will bring us to an analogue of formula (\ref{OuterVariationOfEnergies}).

\begin{theorem}[Power Type Expansion] \label{Expansion}  The following expansion  in powers of  $\,\varepsilon\,\approx 0\,$  is in effect.
\begin{equation}\label{ExpansionInEpsilon}
\begin{split}
\mathscr E [H_\varepsilon] &=   \mathscr E [h]\;+ 4 \varepsilon \re \int_\Omega h_\xi \overline{h_{\bar \xi}} \;\eta_{\bar \xi}\;\textnormal{\textbf{d}} \xi\\ &  + 4\varepsilon^2 \left(\frac{1}{2}\int_\Omega \left(   \abs{h_\xi}^2 + \abs{h_{\bar \xi}}^2       \right) \abs{\eta_{\bar \xi}}^2 \,\textbf{d} \xi \,+\, \re \int_\Omega h_\xi \overline{h_{\bar\xi}} \;\;\eta_\xi \eta_{\bar \xi} \;\textnormal{\textbf{d}} \xi\;\right)\\
&+ \;\textnormal{terms with higher powers of}  \;\;\varepsilon\,.
\end{split}
\end{equation}
\end{theorem}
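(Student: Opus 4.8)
The plan is to take the exact identity \eqref{ExpansionFormula} as the starting point and simply insert the explicit change of variables $\,z=z(\xi)=\xi+\varepsilon\,\eta(\xi)\,$, then read off the coefficients of $\,\varepsilon^0,\varepsilon^1,\varepsilon^2\,$. First I would record the elementary derivatives $\,z_\xi=1+\varepsilon\,\eta_\xi\,$ and $\,z_{\bar\xi}=\varepsilon\,\eta_{\bar\xi}\,$, so that the Jacobian occurring in the denominators becomes the real polynomial $\,J\bydef|z_\xi|^2-|z_{\bar\xi}|^2=1+2\varepsilon\,\re\eta_\xi+\varepsilon^2\bigl(|\eta_\xi|^2-|\eta_{\bar\xi}|^2\bigr)\,$. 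The two numerators are likewise polynomial in $\,\varepsilon\,$: one has $\,|z_{\bar\xi}|^2=\varepsilon^2|\eta_{\bar\xi}|^2\,$ and $\,z_{\bar\xi}\,\overline{z_\xi}=\varepsilon\,\eta_{\bar\xi}+\varepsilon^2\,\eta_{\bar\xi}\,\overline{\eta_\xi}\,$.

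Next I would expand $\,1/J=1-2\varepsilon\,\re\eta_\xi+O(\varepsilon^2)\,$ and multiply it against these two numerators, letting the orders already built into the numerators guide the bookkeeping. The first integrand in \eqref{ExpansionFormula} carries the prefactor $\,|z_{\bar\xi}|^2=\varepsilon^2|\eta_{\bar\xi}|^2\,$, so that whole integral is $\,O(\varepsilon^2)\,$ and, to the required accuracy, contributes only $\,2\varepsilon^2\int_\Omega(|h_\xi|^2+|h_{\bar\xi}|^2)\,|\eta_{\bar\xi}|^2\,\mathbf{d}\xi\,$, which is the first half of the second-order term. The second integrand carries $\,z_{\bar\xi}\,\overline{z_\xi}\,$, which begins at order $\,\varepsilon\,$: its $\,\varepsilon\,$-coefficient produces the first-variation (Hopf-product) term $\,\re\int_\Omega h_\xi\overline{h_{\bar\xi}}\,\eta_{\bar\xi}\,\mathbf{d}\xi\,$, while its $\,\varepsilon^2\,$-coefficient, after substituting $\,\re\eta_\xi=\tfrac12(\eta_\xi+\overline{\eta_\xi})\,$, collapses to $\,-\eta_\xi\eta_{\bar\xi}\,$ and thereby supplies the remaining second-order term $\,\re\int_\Omega h_\xi\overline{h_{\bar\xi}}\,\eta_\xi\eta_{\bar\xi}\,\mathbf{d}\xi\,$. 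Collecting the three orders then yields \eqref{ExpansionInEpsilon}.

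The one genuinely delicate point—and where I expect the \emph{main obstacle} to lie—is justifying that the discarded tail really consists of higher powers of $\,\varepsilon\,$, given that $\,h\,$ is merely of class $\,\W^{1,2}(\Omega)\,$. The densities multiplying the $\,\varepsilon\,$-polynomials are only integrable: $\,|h_\xi|^2+|h_{\bar\xi}|^2\in L^1(\Omega)\,$ by finiteness of the energy, and $\,h_\xi\overline{h_{\bar\xi}}\in L^1(\Omega)\,$ by Cauchy–Schwarz. The saving features are that $\,\eta\in\mathscr C^\infty_0(\Omega)\,$ makes $\,\eta_\xi,\eta_{\bar\xi}\,$ bounded and compactly supported, that $\,z\equiv\xi\,$ (hence $\,J\equiv1\,$) off $\,\supp\eta\,$, and that for $\,|\varepsilon|\,$ below the threshold $\,\varepsilon_{\textnormal{max}}\,$ of Definition~\ref{RangeOfEpsilon} one has $\,J\ge c>0\,$ uniformly on $\,\supp\eta\,$. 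Consequently every $\,\varepsilon\,$-dependent factor in the integrands is smooth in $\,\varepsilon\,$ and uniformly bounded on the support, and Taylor's theorem with integral remainder—applied pointwise in $\,\xi\,$ and then integrated against the fixed $\,L^1\,$ dominating density—shows the remainder is $\,O(\varepsilon^3)\,$. Once this integrability bookkeeping is secured, the remaining algebra is routine and the claimed expansion follows.
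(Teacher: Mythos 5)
Your proposal follows essentially the same route as the paper's proof in Section 4: substitute $z_\xi=1+\varepsilon\eta_\xi$, $z_{\bar\xi}=\varepsilon\eta_{\bar\xi}$ into the exact identity \eqref{ExpansionFormula}, expand $1/J$ to the needed order, and collect the $\varepsilon^0,\varepsilon^1,\varepsilon^2$ coefficients. Your additional care in justifying the $O(\varepsilon^3)$ remainder (uniform lower bound on $J$ over $\supp\eta$ and an $L^1$ dominating density) is a point the paper passes over silently, but it does not change the argument.
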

The $\,\varepsilon\,$-term is called the first (inner)  variation of $\,h\,$. This term  vanishes if and only if $\,\re \int_\Omega h_\xi \overline{h_{\bar \xi}} \;\eta_{\bar \xi}\;\textbf{d} \xi = 0\,$, for every test function $\, \eta \in \mathscr C^\infty_0(\Omega)\,$. However, since $\,\eta\,$ is complex -valued this equation also holds when  "$\re\,$" is dropped.
\subsection{Hopf Harmonics} We have the  following equation parallel to (A).
\begin{proposition} The equation

$$(A^{'}) \;\;\quad\quad\quad \,\frac{\textnormal d}{\textnormal {d} \varepsilon}\,\mathscr E[H_\varepsilon] \,\Big |_{\varepsilon = 0\,} \, =  \,0 \, \,\; \textit{holds for all} \;\,\eta \in \mathscr C^\infty_0(\Omega)\,$$ \\
if and only if $\,h\,$ satisfies the so-called \textbf{Hopf-Laplace equation}:
\begin{equation}\label{Hopf-Laplace-Equation}
\frac{\partial}{\partial\bar{\xi}}\, \left (h_\xi \, \overline{h_{\bar \xi}}\right)    \; =\; 0 \; \; \; \textnormal{(\textit{in the sense of distributions})}
\end{equation}
\end{proposition}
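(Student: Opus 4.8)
The plan is to read the first-order term straight off the Power Type Expansion (Theorem~\ref{Expansion}) and then convert the resulting variational identity into the distributional Hopf--Laplace equation. First I would record a preliminary integrability remark: since $\,h \in \mathscr W^{1,2}(\Omega)\,$, both $\,h_\xi\,$ and $\,h_{\bar\xi}\,$ lie in $\,L^2(\Omega)\,$, so by the Cauchy--Schwarz inequality the Hopf product $\,\mathcal H = h_\xi\overline{h_{\bar\xi}}\,$ belongs to $\,L^1(\Omega)\,$. In particular $\,\mathcal H\,$ is locally integrable and its distributional $\,\bar\xi\,$-derivative is well defined, so that the statement of the Hopf--Laplace equation makes sense.

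Next, I would extract the linear coefficient from \eqref{ExpansionInEpsilon}. There the power of $\,\varepsilon^1\,$ carries the factor $\,4\re\int_\Omega \mathcal H\,\eta_{\bar\xi}\,\textbf{d}\xi\,$, while every remaining summand is $\,O(\varepsilon^2)\,$. Differentiating in $\,\varepsilon\,$ and setting $\,\varepsilon = 0\,$ therefore gives
$$
\frac{\textnormal d}{\textnormal d\varepsilon}\,\mathscr E[H_\varepsilon]\,\Big|_{\varepsilon=0} \;=\; 4\re\int_\Omega \mathcal H\,\eta_{\bar\xi}\,\textbf{d}\xi .
$$
Consequently $\,(A^{'})\,$ is equivalent to the requirement that $\,\re\int_\Omega \mathcal H\,\eta_{\bar\xi}\,\textbf{d}\xi = 0\,$ for every $\,\eta\in\mathscr C^\infty_0(\Omega)\,$.

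The key step is to remove the real part, exploiting that the test functions are complex-valued. Since $\,\mathscr C^\infty_0(\Omega)\,$ is closed under multiplication by $\,i\,$, I would replace $\,\eta\,$ by $\,i\eta\,$; using $\,\re(i\,w) = -\im w\,$ this produces $\,\im\int_\Omega \mathcal H\,\eta_{\bar\xi}\,\textbf{d}\xi = 0\,$ as well. Combining the two vanishings yields the complex identity $\,\int_\Omega \mathcal H\,\eta_{\bar\xi}\,\textbf{d}\xi = 0\,$ for all $\,\eta\in\mathscr C^\infty_0(\Omega)\,$. By the very definition of the distributional derivative, $\,\int_\Omega \mathcal H\,\eta_{\bar\xi}\,\textbf{d}\xi = -\bigl\langle \tfrac{\partial}{\partial\bar\xi}\mathcal H,\,\eta\bigr\rangle\,$, so this is precisely the assertion $\,\frac{\partial}{\partial\bar\xi}\!\left(h_\xi\overline{h_{\bar\xi}}\right) = 0\,$ in the sense of distributions. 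The converse direction needs no new argument: reading the same chain of equalities backwards, the Hopf--Laplace equation forces the linear coefficient, hence the first derivative at $\,\varepsilon=0\,$, to vanish for every test function.

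I do not anticipate a serious obstacle, as the proposition is essentially a corollary of Theorem~\ref{Expansion}. The only points requiring genuine care are the complexification trick, which relies on the complex-valuedness of the test functions exactly as flagged in the sentence preceding the proposition, and the bookkeeping of sign and conjugation conventions in the distributional pairing; since both the real and the imaginary parts of the integral vanish, however, the conclusion is insensitive to the particular pairing convention chosen.
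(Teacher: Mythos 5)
Your proposal is correct and follows essentially the same route as the paper: the paper also reads the coefficient $4\re\int_\Omega h_\xi\overline{h_{\bar\xi}}\,\eta_{\bar\xi}\,\textbf{d}\xi$ off the first-order term of the expansion in Theorem \ref{Expansion} and then drops the real part by exploiting that the test functions $\eta$ are complex-valued, exactly as in your $\eta\mapsto i\eta$ substitution. The extra details you supply (the $L^1$ integrability of the Hopf product via Cauchy--Schwarz and the explicit distributional pairing) are harmless elaborations of the same argument.
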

In other words,  the \textit{Hopf product}\, $\, \mathcal H(\xi) \bydef  h_\xi \, \overline{h_{\bar \xi}} \in \mathscr L^1(\Omega)\,$ is a holomorphic function in  $\,\Omega\,$.
\begin{definition}[Hopf Harmonics]
The term \textit{Hopf harmonics} refers to $\,\mathscr W^{1,2}_{\textnormal{loc}}(\Omega)\,$-solutions of (\ref{Hopf-Laplace-Equation}).
\end{definition}
\subsection{Infinitesimal Dirichlet Principle}
We shall show that the second order variation; that is, the $\,\varepsilon^2\,$-term in (\ref{ExpansionInEpsilon}) is nonnegative. Thus the condition parallel to (B)  reads as,
\begin{itemize}
\item [($B^{'}$)] \quad\quad\quad\quad\quad $\,\frac{\textnormal d^2}{\textnormal {d} \varepsilon^2}\,\mathscr E[h^\varepsilon] \,\Big |_{\varepsilon = 0\,}\; \geqslant  \;0 \,$\,,\;  for all $\,\eta \in\mathscr C^\infty_0(\Omega)\,$.
    \end{itemize}
Precisely, we shall prove the following:
\begin{theorem} [Infinitesimal Dirichlet Principle] \label{SecOrdInequality}Let $\,h \in \mathscr W^{1,2}_{\textnormal{loc}}(\Omega)\,$ be Hopf harmonic. Then for every $\,\eta \in \mathscr C^\infty_0(\Omega)\,$ it holds
\begin{equation}\label{QuadraticInequality}  \frac{1}{2}\int_\Omega \left(   \abs{h_\xi}^2 + \abs{h_{\bar \xi}}^2       \right) \abs{\eta_{\bar \xi}}^2 \,\textnormal{\textbf{d}} \xi \,\;+\;
 \re \int_\Omega h_\xi \overline{h_{\bar\xi}} \;\;\eta_\xi \eta_{\bar \xi} \;\textnormal{\textbf{d}} \xi\;\geqslant 0
\end{equation}
\end{theorem}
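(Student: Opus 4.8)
The plan is to exploit the fact that, although the integrand in \eqref{QuadraticInequality} need not be nonnegative pointwise (the cross term $\re(\mathcal{H}\,\eta_\xi\eta_{\bar\xi})$ can dominate when $\abs{\eta_\xi}>\abs{\eta_{\bar\xi}}$), the integral is controlled once the holomorphy of the Hopf product $\mathcal{H}=h_\xi\overline{h_{\bar\xi}}$ is combined with the vanishing of Jacobian integrals of compactly supported maps. First I would dispose of two reductions. We may assume $\mathcal{H}\not\equiv 0$, since otherwise the second integral vanishes and the first is manifestly nonnegative. A mollification of $h$ reduces us to smooth data, making the integrations by parts below legitimate; here I use that $\mathcal{H}$, being a distributional solution of $\mathcal{H}_{\bar\xi}=0$, is a genuine holomorphic, hence $\mathscr{C}^\infty$, function, and so is $\mathcal{H}'=\mathcal{H}_\xi$. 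By the arithmetic--geometric mean inequality $\tfrac12(\abs{h_\xi}^2+\abs{h_{\bar\xi}}^2)\ge\abs{h_\xi}\,\abs{h_{\bar\xi}}=\abs{\mathcal{H}}$, so the left side of \eqref{QuadraticInequality} splits as
\[
\frac12\int_\Omega\bigl(\abs{h_\xi}-\abs{h_{\bar\xi}}\bigr)^2\abs{\eta_{\bar\xi}}^2\,\textbf{d}\xi\;+\;\Bigl(\int_\Omega\abs{\mathcal{H}}\,\abs{\eta_{\bar\xi}}^2\,\textbf{d}\xi+\re\int_\Omega\mathcal{H}\,\eta_\xi\eta_{\bar\xi}\,\textbf{d}\xi\Bigr).
\]
The first term is nonnegative, so it suffices to prove that the bracketed quantity, call it $\mathcal{I}[\eta]$, is nonnegative.

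The heart of the argument is a passage to the natural parameter of the quadratic differential $\mathcal{H}\,\textnormal{d}\xi^2$. Off the isolated zeros of $\mathcal{H}$ I would choose a local holomorphic square root $\omega$ with $\omega^2=\mathcal{H}$ and set $w=\omega\eta$. Although $\omega$, hence $w$, is defined only up to sign, every quantity entering the estimate is invariant under $\omega\mapsto-\omega$, and therefore globally and unambiguously defined on $\Omega$ (with only integrable singularities at the zeros of $\mathcal{H}$): namely $\abs{w_{\bar\xi}}^2=\abs{\mathcal{H}}\abs{\eta_{\bar\xi}}^2$, the product $w_\xi w_{\bar\xi}=\mathcal{H}\,\eta_\xi\eta_{\bar\xi}+\tfrac12\mathcal{H}'\eta\,\eta_{\bar\xi}$, the Jacobian $J_w=\abs{w_\xi}^2-\abs{w_{\bar\xi}}^2$, and $\abs{\nabla\re w}^2$. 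Since $\eta\,\eta_{\bar\xi}=\tfrac12(\eta^2)_{\bar\xi}$ and $\mathcal{H}'$ is holomorphic, one integration by parts gives $\re\int_\Omega\mathcal{H}'\eta\,\eta_{\bar\xi}\,\textbf{d}\xi=0$, so that $\mathcal{I}[\eta]=\int_\Omega\bigl(\abs{w_{\bar\xi}}^2+\re(w_\xi w_{\bar\xi})\bigr)\,\textbf{d}\xi$. A direct computation in real coordinates, writing $w=u+iv$, then yields the pointwise identity
\[
\abs{w_{\bar\xi}}^2+\re\bigl(w_\xi w_{\bar\xi}\bigr)=\tfrac12\abs{\nabla u}^2-\tfrac12 J_w,
\]
and hence $\mathcal{I}[\eta]=\tfrac12\int_\Omega\abs{\nabla\re w}^2\,\textbf{d}\xi-\tfrac12\int_\Omega J_w\,\textbf{d}\xi$.

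Finally I would show the Jacobian integral vanishes. Locally $J_w\,\textbf{d}\xi=\textnormal{d}u\wedge\textnormal{d}v=\textnormal{d}(u\,\textnormal{d}v)$ is exact, and the one-form $u\,\textnormal{d}v$ is itself invariant under the simultaneous sign change of $u,v$, hence globally defined on $\Omega$ minus the zero set of $\mathcal{H}$. Excising small disks $D_\varepsilon(\xi_j)$ about the zeros and applying Stokes' theorem, the boundary $\partial\Omega$ contributes nothing because $\eta$ is compactly supported, so the only possible residue comes from the circles $\partial D_\varepsilon(\xi_j)$; a local expansion $\omega\sim c\,(\xi-\xi_j)^{m/2}$ at a zero of order $m\ge1$ shows that both these boundary integrals and the solid integrals of $\abs{J_w}$ over $D_\varepsilon(\xi_j)$ tend to $0$ as $\varepsilon\to0$. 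Therefore $\int_\Omega J_w\,\textbf{d}\xi=0$, leaving $\mathcal{I}[\eta]=\tfrac12\int_\Omega\abs{\nabla\re w}^2\,\textbf{d}\xi\ge0$, which together with the first reduction proves \eqref{QuadraticInequality}.

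The hard part will be exactly the behaviour at the zeros of the Hopf differential: the square root $\omega=\sqrt{\mathcal{H}}$ branches there, so one must carefully check that the genuinely global quantities $\abs{w_{\bar\xi}}^2$, $w_\xi w_{\bar\xi}$, $J_w$, $\abs{\nabla\re w}^2$ carry only integrable singularities and that the excision argument produces no surviving boundary term. A second, more routine, technical point is the preliminary regularization required to justify the integrations by parts for merely $\mathscr{W}^{1,2}_{\textnormal{loc}}$ data with $\mathcal{H}\in\mathscr{L}^1(\Omega)$.
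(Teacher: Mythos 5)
Your proposal is correct, and its core is genuinely different from the paper's argument, so a comparison is in order. The paper proves the theorem by reducing it, via the same AM--GM step you use, to Lemma \ref{InequalityForH}, $\int_\Omega|\mathcal H|\,|\eta_{\bar\xi}|^2\ge\bigl|\int_\Omega\mathcal H\,\eta_\xi\eta_{\bar\xi}\bigr|$, which is then established by partitioning a neighborhood of $\supp\eta$ into rectangles $R_\alpha$ having the zeros of $\mathcal H$ as corners, choosing a branch of $\sqrt{\mathcal H}$ on each, and running the chain $\tfrac12\sum\int(|f^\alpha_\xi|^2+|f^\alpha_{\bar\xi}|^2)\ge\sum\int|f^\alpha_\xi f^\alpha_{\bar\xi}|\ge|\sum\int f^\alpha_\xi f^\alpha_{\bar\xi}|$, where the first equality rests on $\sum_\alpha\int_{R_\alpha}J_{f^\alpha}=0$ (Lemma \ref{IntegralsOfJacobians}, proved by cancellation of the boundary integrals of the sign-invariant $1$-form $\bar f\,\textnormal{d}f$). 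You share the two essential ingredients --- the identity $\int\mathcal H'\eta\,\eta_{\bar\xi}=\tfrac14\int(\mathcal H'\eta^2)_{\bar\xi}=0$ and the vanishing of the Jacobian integral of the two-valued map $w=\sqrt{\mathcal H}\,\eta$ --- but you handle the branching by excising disks about the zeros and exploiting the sign-invariance of $u\,\textnormal{d}v$ together with the local order $\omega\sim c(\xi-\xi_j)^{m/2}$, rather than by a rectangular partition; and in place of the paper's chain of inequalities you use the exact pointwise identity $|w_{\bar\xi}|^2+\re(w_\xi w_{\bar\xi})=\tfrac12|\nabla\re w|^2-\tfrac12J_w$ (which checks out). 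This buys you an exact formula $\mathcal I[\eta]=\tfrac12\int_\Omega|\nabla\re(\sqrt{\mathcal H}\,\eta)|^2$, which makes the equality analysis of Section \ref{Section8} essentially immediate; what you prove directly is only the $\re$-version of Lemma \ref{InequalityForH}, which suffices for the theorem, and the full absolute-value version is recovered by applying your inequality to $e^{i\theta}\eta$ and optimizing over $\theta$. Two small points: the proposed mollification of $h$ is neither needed nor innocuous (smoothing $h$ does not preserve the Hopf--Laplace equation), but it is also never used, since after the AM--GM split every object in sight depends only on the holomorphic, hence $\mathscr C^\infty$, function $\mathcal H$ and on $\eta\in\mathscr C^\infty_0(\Omega)$; and the Stokes argument should be run on a smooth subdomain compactly containing $\supp\eta$ so that the outer boundary term visibly vanishes, with only the finitely many zeros of $\mathcal H$ in $\supp\eta$ requiring excision.
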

The proof of this theorem needs considerable work, see Sections \ref{Section5}, \ref{RectangularPartition}, \ref{Section7} and \ref{Section8}. \\

There are computational tricks that enable us to prove even more general estimate than (\ref{QuadraticInequality}). Namely, we have

\begin{lemma} \label{InequalityForH} Let $\,\mathcal H \,$ be a holomorphic function in $\,\Omega\,$. Then

\begin{equation} \label{GeneralHolomorphicInequality}
 \int_\Omega   |\mathcal H(\xi)|  \; |\eta_{\bar \xi} |^2 \;  \;\textnormal{\textbf{d}} \xi\; \geqslant \; \left |\,\int_\Omega \mathcal H(\xi)  \;\eta_\xi\, \eta_{\bar \xi}  \;\textnormal{\textbf{d}} \xi\;\right |
\end{equation}
for all test functions $\,0 \not\equiv\eta \in \mathscr C^\infty_0 (\Omega)\,$, see Theorem \ref{CriticalDirection} for an equality.
\end{lemma}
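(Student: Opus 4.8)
The plan is to eliminate the weight $|\mathcal H|$ by absorbing a holomorphic square root of $\mathcal H$ into the test function. Assume $\mathcal H\not\equiv 0$ (otherwise both sides vanish), so its zeros are isolated. On any simply connected piece avoiding these zeros I write $\mathcal H=\Phi^2$ with $\Phi$ holomorphic and set $u=\Phi\,\eta$. Since $\Phi_{\bar\xi}=0$, we get $u_{\bar\xi}=\Phi\,\eta_{\bar\xi}$, hence the pointwise identity $|u_{\bar\xi}|^2=|\mathcal H|\,|\eta_{\bar\xi}|^2$, so the left-hand side of \eqref{GeneralHolomorphicInequality} equals $\int_\Omega |u_{\bar\xi}|^2\,\textbf{d}\xi$. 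For the right-hand side I use $u_\xi=\Phi_\xi\eta+\Phi\eta_\xi$ together with $\Phi\Phi_\xi=\tfrac12\mathcal H_\xi$ to obtain the algebraic identity
\[
u_\xi\,u_{\bar\xi}=\mathcal H\,\eta_\xi\eta_{\bar\xi}+\tfrac12\,\mathcal H_\xi\,\eta\,\eta_{\bar\xi},
\]
in which \emph{every} term is single-valued, even though $\Phi$ itself need not be.

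The correction term integrates away: because $\eta\,\eta_{\bar\xi}=\tfrac12(\eta^2)_{\bar\xi}$ and $\mathcal H$ is holomorphic,
\[
\int_\Omega \mathcal H_\xi\,\eta\,\eta_{\bar\xi}\,\textbf{d}\xi=\tfrac12\int_\Omega \mathcal H_\xi\,(\eta^2)_{\bar\xi}\,\textbf{d}\xi=-\tfrac12\int_\Omega \mathcal H_{\xi\bar\xi}\,\eta^2\,\textbf{d}\xi=0 .
\]
Consequently $\int_\Omega \mathcal H\,\eta_\xi\eta_{\bar\xi}\,\textbf{d}\xi=\int_\Omega u_\xi\,u_{\bar\xi}\,\textbf{d}\xi$, and the entire lemma collapses to the flat inequality $\bigl|\int_\Omega u_\xi u_{\bar\xi}\,\textbf{d}\xi\bigr|\le\int_\Omega|u_{\bar\xi}|^2\,\textbf{d}\xi$. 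This I would deduce from Cauchy–Schwarz combined with the null-Lagrangian identity $\int_\Omega\bigl(|u_\xi|^2-|u_{\bar\xi}|^2\bigr)\textbf{d}\xi=0$: the Jacobian of a compactly supported map integrates to zero, which forces $\int|u_\xi|^2=\int|u_{\bar\xi}|^2$ and hence $\bigl|\int u_\xi u_{\bar\xi}\bigr|\le\bigl(\int|u_\xi|^2\bigr)^{1/2}\bigl(\int|u_{\bar\xi}|^2\bigr)^{1/2}=\int|u_{\bar\xi}|^2$.

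The hard part will be that $\Phi=\sqrt{\mathcal H}$ is genuinely double-valued around the odd-order zeros of $\mathcal H$, so $u$ is not a bona fide global function and $u_\xi$ develops an integrable singularity, $|u_\xi|^2\sim|\xi-\xi_0|^{m-2}$ near a zero of order $m$. I would dispose of this as follows. First, $\int_\Omega|u_\xi|^2<\infty$ since $m-2>-2$, so the Cauchy–Schwarz step is legitimate; and $u_{\bar\xi}=\Phi\eta_{\bar\xi}$ is bounded with compact support. Second, the null-Lagrangian identity survives the branching: one has $J_u\,\textbf{d}\xi=\tfrac{i}{2}\,d\bigl(u\,d\bar u\bigr)$, and the $1$-form $u\,d\bar u$ is invariant under the monodromy $\Phi\mapsto-\Phi$, hence single-valued on all of $\Omega$; moreover it is bounded and tends to $0$ at each zero. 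Applying Stokes on $\Omega$ with small disks of radius $\varepsilon$ excised around the (finitely many) zeros inside $\supp\eta$, the boundary term on $\partial\Omega$ vanishes by compact support of $\eta$, while each circle contribution is $o(1)$ as $\varepsilon\to0$. This delivers $\int_\Omega J_u=0$ and completes the proof.

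Finally, tracking the equality case: equality in Cauchy–Schwarz holds precisely when $u_\xi=\lambda\,u_{\bar\xi}$ almost everywhere for a unimodular constant $\lambda$, which upon dividing by $\Phi$ reads $\eta_\xi+\tfrac{\mathcal H_\xi}{2\mathcal H}\,\eta=\lambda\,\eta_{\bar\xi}$. This rigidity is what should match the extremal configuration recorded in Theorem \ref{CriticalDirection}, identifying the critical test functions as those adapted to the distinguished directions of the quadratic differential $\mathcal H\,d\xi^2$.
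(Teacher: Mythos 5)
Your proof is correct, and its analytic core coincides with the paper's: write $\sqrt{\mathcal H}=\Phi$ locally, absorb it into the test function via $u=\Phi\eta$, kill the correction term through $\int_\Omega \mathcal H_\xi\,\eta\,\eta_{\bar\xi}\,\textbf{d}\xi=0$ (this is exactly the paper's identity in Section \ref{Section5}), invoke the null\--Lagrangian property $\int_\Omega J_u=0$ to equate $\int|u_\xi|^2$ with $\int|u_{\bar\xi}|^2$, and finish with Cauchy--Schwarz (the paper uses pointwise AM--GM plus the triangle inequality, which yields the same bound). Where you genuinely diverge is in the treatment of the odd\--order zeros of $\mathcal H$: the paper builds a rectangular partition $\mathscr F=\{R_1,\dots,R_N\}$ with the zeros placed at corners, chooses an independent branch of $\sqrt{\mathcal H}$ on each rectangle, and obtains $\sum_\alpha\int_{R_\alpha}J_{f^\alpha}=0$ by pairwise cancellation of the boundary integrals of the single\--valued $1$-form $\omega=\bar f\,df$ along shared sides (Lemma \ref{IntegralsOfJacobians}); you instead observe that $u\,d\bar u$ is invariant under the monodromy $\Phi\mapsto-\Phi$, hence globally defined on $\Omega\setminus\mathcal Z$, and run Stokes on $\Omega$ with small disks excised. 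Your version is shorter and avoids the combinatorics of the mesh; the paper's partition is heavier but sets up machinery it reuses for the length\--area arguments with quadratic differentials later on. Two small corrections: near a \emph{simple} zero the form $u\,d\bar u$ is bounded but does not tend to $0$ (it is comparable to $|\xi-\xi_0|^{m-1}$), which is still enough since the circle of radius $\varepsilon$ contributes $O(\varepsilon)$; and the equality case in your chain forces $u_\xi=\lambda\,\overline{u_{\bar\xi}}$ with $|\lambda|=1$ (note the conjugation), in agreement with the relations $f^\alpha_{\bar\xi}=c_\alpha\overline{f^\alpha_\xi}$ derived in the paper's backwards analysis, not $u_\xi=\lambda\,u_{\bar\xi}$ as written.
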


It should be noted that establishing a strict inequality in  (\ref{GeneralHolomorphicInequality}) would imply that
\begin{equation}\label{Stronginequality}
\mathscr E[H_\varepsilon] \,>\, \mathscr E[h]\, , \;  \textnormal{for sufficiently small}\; \varepsilon \not =  0\;
\end{equation}
This case actually arises when $\,J_h \not = 0\,$ a.e. in $\,\Omega\,$, see Theorem \ref{StrictInequality}.\\

Proceeding in this direction  to the higher order variations does not look promising. Instead, we shall explore the length-area method for the Hopf differential $\,\mathcal H(z)\,\textnormal d z\otimes \textnormal d z. $
This will lead us to the exact analogue of Dirichlet Principle for simply connected domains:
\begin{theorem} \label{FullDirichletPrinciple} Let $\,\Omega \subset \mathbb C\,$ be simply connected domain  and $\,h : \Omega \rightarrow \mathbb C\,$ a Hopf harmonic mapping. Then no inner variation of $\,h\,$ decreases its energy. Precisely,
\begin{equation}
\mathscr E[h] \; \leqslant \, \mathscr E[H] \, , \qquad   H(z) = h(\xi(z))
\end{equation}
whenever $\,\xi : \Omega \onto \Omega\,$ is a diffeomorphism equal to the identity near $\,\partial \Omega\,$ .
\end{theorem}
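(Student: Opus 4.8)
The plan is to exploit the natural geometry of the Hopf quadratic differential $\mathcal{H}(\xi)\,\textnormal{d}\xi\otimes\textnormal{d}\xi$ and to reduce the inequality, via a conformal change of the domain, to two elementary variational estimates. First, the borderline case $\mathcal{H}\equiv 0$ is already settled by Corollary \ref{theSingularCase}, so I assume $\mathcal{H}\not\equiv 0$; its zeros are then isolated. Away from these zeros the \emph{natural parameter} $w=u+iv=\Phi(\xi)$, defined locally by $\textnormal{d}w=\sqrt{\mathcal{H}}\,\textnormal{d}\xi$, is a conformal coordinate in which $\mathcal{H}\,\textnormal{d}\xi^2=\textnormal{d}w^2$. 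Since the Dirichlet energy is conformally invariant, writing $\tilde h=h\circ\Phi^{-1}$ and $g=\Phi\circ\xi\circ\Phi^{-1}$ turns the claim $\mathscr{E}[h]\le\mathscr{E}[H]$ into
$$\mathscr{E}[\tilde h]\;\le\;\mathscr{E}[\tilde h\circ g],$$
where $g$ is a self-diffeomorphism of the $w$-image of $\Omega$ equal to the identity near its boundary (because $\xi$ is near $\partial\Omega$). The gain is that in the natural parameter the Hopf product of $\tilde h$ is identically $1$.

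The structural consequence of $\tilde h_w\,\overline{\tilde h_{\bar w}}\equiv 1$ is that, viewing $\tilde h$ as an $\mathbb{R}^2$-valued map, $\langle\tilde h_u,\tilde h_v\rangle=0$ and $|\tilde h_u|^2-|\tilde h_v|^2=4$. Setting $t=|\tilde h_v|\ge 0$ gives $|\tilde h_u|^2=t^2+4$ and
$$\mathscr{E}[\tilde h]=\int\left(t^2+2\right)\textnormal{d}u\,\textnormal{d}v=2\,\mathrm{Area}+\int t^2\,\textnormal{d}u\,\textnormal{d}v.$$
For the competitor $G=\tilde h\circ g$, with $g=(g^1,g^2)$, the orthogonality $\langle\tilde h_u,\tilde h_v\rangle=0$ kills every cross term in the chain rule, and I obtain the clean identity
$$\tfrac{1}{2}\!\left(|G_u|^2+|G_v|^2\right)=2\,|\nabla g^1|^2+\tfrac12\,(t\circ g)^2\,\norm{Dg}^2.$$

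It then remains to integrate and bound the two summands separately. For the first, $g^1-u$ has compact support and $u=\re w$ is harmonic, so the Dirichlet principle gives $\int 2|\nabla g^1|^2\ge\int 2|\nabla u|^2=2\,\mathrm{Area}$. For the second, the pointwise inequality $\tfrac12\norm{Dg}^2\ge\det Dg=J_g$ together with the change of variables $p=g(u,v)$ (legitimate since $g$ maps the domain onto itself) yields
$$\int\tfrac12(t\circ g)^2\norm{Dg}^2\;\ge\;\int(t\circ g)^2 J_g\,\textnormal{d}u\,\textnormal{d}v=\int t^2\,\textnormal{d}p.$$
Adding the two estimates returns exactly $\mathscr{E}[\tilde h\circ g]\ge 2\,\mathrm{Area}+\int t^2=\mathscr{E}[\tilde h]$, which is the assertion; equality forces $g^1=u$ and $g$ conformal wherever $t\ne 0$.

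The genuine difficulty is global: $\Phi$ is only locally single-valued, since $\sqrt{\mathcal{H}}$ has branch points at the zeros of $\mathcal{H}$, so the natural parameter endows $\Omega$ with a half-translation (multivalued) structure rather than a single conformal chart. To make the reduction rigorous I would cut $\Omega$ along the critical graph (the trajectories issuing from the zeros) and run the computation on the resulting strip/rectangle pieces, then reassemble. Here the hypothesis of simple connectivity is decisive: a closed horizontal trajectory would bound a disk carrying negative total cone-curvature, which a Gauss--Bonnet count forbids, so every regular trajectory is a crosscut and a decomposition into trajectory rectangles is available. Controlling the critical graph and the behavior of both $\tilde h$ and $g$ near the zeros and across the cuts --- so that no spurious boundary contributions appear --- is the main obstacle, and is precisely what the length-area analysis of Sections \ref{RectangularPartition}, \ref{Section7} and \ref{Section8} is designed to supply.
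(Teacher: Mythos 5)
Your local computation in the natural parameter is correct and attractive: with $\mathcal H\,\textnormal{d}\xi\otimes\textnormal{d}\xi=\textnormal{d}w\otimes\textnormal{d}w$ one indeed gets $\langle \tilde h_u,\tilde h_v\rangle=0$ and $|\tilde h_u|^2-|\tilde h_v|^2=4$, the cross terms cancel, and the two elementary estimates (classical Dirichlet principle for $g^1-u$, and $\tfrac12\norm{Dg}^2\geqslant J_g$ plus change of variables for the $t$-term) close the argument \emph{provided} $g=\Phi\circ\xi\circ\Phi^{-1}$ is a genuine self-diffeomorphism of a single chart, equal to the identity near its boundary. That proviso is where the proof breaks, and the fix you sketch does not repair it. The inner variation $\xi$ is only the identity near $\partial\Omega$; it does not preserve the critical graph $\mathcal C$ nor the individual strip domains $\Omega_\alpha$ of the decomposition in Theorem \ref{StripPartition}. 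Consequently, after cutting along $\mathcal C$: (i) $g$ is not a map of $\Phi(\Omega_\alpha)$ onto itself, and since distinct strips carry distinct (and non-matching) branches of $\Phi$, the composite $g$ is not even single-valued or continuous across the $\xi$-preimages of the cuts; (ii) $g^1-u$ does not have compact support in $\Phi(\Omega_\alpha)$, because part of $\partial\Omega_\alpha$ consists of critical trajectories interior to $\Omega$ where $\xi\neq\mathrm{id}$, so the harmonic-competitor step acquires uncontrolled boundary terms there; (iii) the identity $\int (t\circ g)^2 J_g=\int t^2$ requires $g$ to be a bijection of the piece onto itself, which fails. These are not technicalities to be "supplied by Sections \ref{RectangularPartition}--\ref{Section8}": those sections prove the \emph{infinitesimal} statement (Theorem \ref{SecOrdInequality}) by a partition adapted to integration by parts of an exact $2$-form, and they do not address a finite diffeomorphism scrambling the strips.

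The paper's proof is built precisely to be insensitive to this scrambling. It never pulls the image side back through $\Phi$: after Lemma \ref{MainInequality} and the H\"older step \eqref{LowerBound}, the key input is Lemma \ref{ChangeInLineIntegral} together with Strebel's Theorem 16.1, which says that for each vertical crosscut $\gamma$ the image curve $f(\gamma)$ --- an arbitrary curve in $\Omega$ joining the \emph{same boundary endpoints}, since $f=\mathrm{id}$ near $\partial\Omega$ --- has $\mathcal H$-length at least $|\gamma|_{\mathcal H}$, no matter which strips it wanders through. The length--area inequality (Proposition \ref{LengthAreaIneq}) then integrates these line inequalities over the strip decomposition of the \emph{source} only. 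If you want to rescue your route, you must replace your two pointwise/chartwise estimates by a comparison of this global, curve-by-curve type (or prove directly that $\sum_\alpha$ of your boundary terms along $\mathcal C$ cancel); as written, the globalization step is a genuine gap.
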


\begin{question}\label{Question}
The question arises whether Theorem  \ref{FullDirichletPrinciple} is still valid for multiple connected domains,  so as to complete Riemann's Dirichlet Priciple for all Hopf harmonics.
\end{question}
Our partial answers to this question are furnished  with a number of examples based on the additional assumptions about trajectories of the Hopf differential $\,\mathcal H(z) \,\textnormal d z \otimes \textnormal d z\,$. Precisely, we shall consider the Strebel type differentials with leminiscate type trajectory structure,  see Theorem \ref{MultiplyConnectedPrinciple}.

\begin{remark}
To make this text available to readers whose knowledge about quadratic differentials may be limited,  we provide definitions and include some computational details when clarity requires it. A standard reference to quadratic differentials is  the book of K. Strebel \cite{Strebel}.
\end{remark}

\section{Outer Variation versus Inner Variation}
By way of illustration consider a  map $\, h(z) = \frac{z}{|z |}\,$ defined in an annulus $\,\Omega \bydef \{ z \colon  0 < r < |z| < 1\;\}\,$. The reader may wish to verify that it  satisfies the Hopf-Laplace equation
$$\;
h_z \overline{h_{\bar z}}  = \frac{-1}{4 z^2}
$$
 Therefore, by~\cite{IOho} and by Theorem \ref{MultiplyConnectedPrinciple} herein,  its (nontrivial)  inner variations increase the energy. On the other hand, there are outer variations which decrease the energy. For, consider the following variation  of $\,h\,$,
 $$
 h(z) + \eta(z) =  \frac{1}{1+r} \left(z + \frac{r}{\bar z} \right)\;,\;\;\textnormal{where}\;\; \eta(z) \bydef \frac{1}{1+r} \left(z + \frac{r}{\bar z} \right)\; -\;\frac{z}{|z |}
 $$
   Note that the function $\,\eta \in \mathscr C^\infty(\Omega) \,$ vanishes on  $\,\partial \Omega\,$.  Since  $\, h + \eta\,$ is a harmonic function with the same boundary values as  $\,h\,$ its energy is smaller than that of $\,h\,$. Of course one may modify slightly $\,\eta\,$ near $\,\partial \Omega\,$ to become a test function of class $\,\mathscr C^\infty_0 (\Omega)\,$. This does not affect the energy of $\, h + \eta\,$ to the extent that it will remain smaller than that of $\,h\,$.

\section{Proof of Formula (\ref{ExpansionFormula}) }\label{Section3}
We start with a derivation of Formula (\ref{ExpansionFormula}). For, recall that $\,H(z)  =  h(\xi(z))\,$. The chain rule yields:
\[
\begin{split}
H_z (z) &= h_\xi \,\xi_z + h_{\bar \xi} \,\overline{\xi_{\bar z}} \\
H_{\bar z} (z) &= h_\xi\, \xi_{\bar z} + h_{\bar \xi}\, \overline{\xi_{z}} \, .
\end{split}
\]
Hence
\[
\begin{split}
\abs{H_z (z)}^2 & = \abs{h_\xi}^2 \abs{\xi_z}^2 + \abs{h_{\bar \xi}}^2 \abs{\xi_{\bar z}}^2 + 2 \re \big(h_\xi \overline{h_{\bar \xi}} \,\, \xi_z \xi_{\bar z}\big) \\
\abs{H_{\bar z} (z)}^2 & = \abs{h_\xi}^2 \abs{\xi_{\bar z}}^2 + \abs{h_{\bar \xi}}^2 \abs{\xi_{z}}^2 + 2 \re \big(h_\xi \overline{h_{\bar \xi}} \,\, \xi_z \xi_{\bar z}\big)
\end{split}
\]
and
\[
\abs{H_z (z)}^2 + \abs{H_{\bar z} (z)}^2  = \left(   \abs{h_\xi}^2 + \abs{h_{\bar \xi}}^2 \right) \left(    \abs{\xi_z}^2+ \abs{\xi_{\bar z}}^2\right) + \,4\,\re\left( h_\xi \overline{h_{\bar \xi}} \,\,\xi_z \xi_{\bar z} \right)
\]

Here both the left and the right hand side are functions in the $\,z\,$-variable. Thus we integrate both sides with respect to the area element \,$\,\textbf{d} z\,$. However, in the integral of the right hand side we make change of variable $\, z = z(\xi)\,$.  The transformation rule of the area element takes the form:  \,
$$\,\textbf{d} z = \frac{\textbf{d} \xi}{ J_\xi(z)} = \frac{\textbf{d} \xi}{ |\xi_z|^2 \,-\, |\,\xi_{\bar{z}}|^2}\,$$
Hence
\[
\begin{split}
& \int_\Omega\abs{H_z (z)}^2 + \abs{H_{\bar z} (z)}^2 \textbf{d} z\, = \\&  \int_\Omega\left(   \abs{h_\xi}^2 + \abs{h_{\bar \xi}}^2 \right) \left( \frac{   \abs{\xi_z}^2+ \abs{\xi_{\bar z}}^2}{|\xi_z|^2 \,-\, |\xi_{\bar{z}}|^2}\right) \textbf{d} \xi + \,4\,\re\int_\Omega\left( h_\xi \overline{h_{\bar \xi}}\right) \left ( \,\frac{\,\xi_z \xi_{\bar z}}{|\xi_z|^2 \,-\, |\xi_{\bar{z}}|^2} \right) \textbf{d} \xi
\end{split}
\]

  We need to express $\,\xi_z(z)\,$ and $\,\xi_{\bar z}(z)\,$ as functions of the $\,\xi\,$-variable. For,  we compute  $\frac{\partial \xi}{\partial z}= \xi_z (z)$ and $\frac{\partial \xi}{\partial \bar z}= \xi_{\bar z} (z)$ by means  of the derivatives of the inverse map $\,z=z(\xi)\,$
$$
\xi_z \,=\, \frac{\overline{z_\xi(\xi)}}{J_z(\xi)}\;,\; \xi_{\bar z} \;=\; \frac{- z_{\bar \xi}(\xi)}{J_z(\xi)}\;\;,\;\; \textnormal{where}\; J_z(\xi) = |z_\xi|^2\,-\,|z_{\bar \xi}|^2  \; > 0
$$
Now, formula (\ref{ExpansionFormula}) is readily inferred  from these equations.
\section{Proof of   Theorem \ref{Expansion}} We take  $\,z = z(\xi) \bydef \xi + \,\varepsilon \,\eta(\xi)\,$ in  (\ref{ExpansionFormula}), where $\,\eta = \eta(\xi)\,$ can be an arbitrary function of class $\,\mathscr C^\infty_0(\Omega)\,$, provided $\,\varepsilon\,$ is small enough. We substitute the derivatives $\,z_\xi = 1 + \,\varepsilon \,\eta_\xi\,$ and $\,z_{\bar \xi} = \varepsilon\,\eta_{\bar \xi} \,$  into  formula (\ref{ExpansionFormula}) to obtain,

\begin{equation}\label{powerExpansion}
\begin{split}
\mathscr E [H] \,-\, \mathscr E [h]&= \\& = 2 \varepsilon ^2 \int_\Omega \left (|\,h_\xi(\xi) \,|^2 \,+\, |h_{\bar \xi} (\xi)\,|^2\,\right) \frac{|\eta_{\bar{\xi}}|^2}{|1 + \varepsilon \eta_\xi|^2 - |\varepsilon\eta_{\bar{\xi}}|^2}\, \textbf{d} \xi\\&
  -4 \varepsilon \re \int_\Omega h_\xi \overline{h_{\bar \xi}} \;\; \frac{\eta_{\bar{\xi}}\,\, \overline{1 +\varepsilon \eta_\xi}}{|1 + \varepsilon \eta_\xi|^2 - |\varepsilon \eta_{\bar{\xi}}|^2}\, \textbf{d} \xi
\end{split}
\end{equation}
Since we are interested only in terms up to order $\,\varepsilon ^2\,$ we only need to take into account the following expansions
$$\,\frac{1}{|1 + \varepsilon \eta_\xi|^2 - |\varepsilon\eta_{\bar{\xi}}|^2}\, \approx 1\,\; \textnormal{(in the first integral)} $$
 $$\,\frac{1}{|1 + \varepsilon \eta_\xi|^2 - |\varepsilon\eta_{\bar{\xi}}|^2}\, \approx \frac{1}{1 + 2 \varepsilon \re \eta_\xi} \, \approx 1\,- 2\varepsilon \re \eta_\xi \,\; \textnormal{(in the second integral).} $$

  Also observe that $\, (\overline{1 +\varepsilon \eta_\xi} )\, (1\,- 2\varepsilon \re \eta_\xi) \,=\, 1 - \varepsilon \,\eta_\xi \;+\;\textnormal{higher powers of}\,\varepsilon\,  $. Substituting these equations into (\ref{powerExpansion}), in view of $\,\int_\Omega h_\xi \overline{h_{\bar \xi}} \;\; \eta_{\bar{\xi}}\, \textbf{d} \xi \;=\;0\,$,  we conclude with  formula  (\ref{ExpansionInEpsilon}), as desired by Theorem \ref{Expansion}.

\section{The Case $\,\mathcal H  = h_\xi \overline{h_{\bar{\xi}}}\,\equiv\,0\,$}\label{SingularCase}

On the key issue of  Dirichlet Principle the following equation
 \begin{equation}\label{BorderlineEquation1}\,h_\xi\, \overline{h_{\bar\xi}}\, \equiv 0\,\;\;\;\;\textnormal{(homogeneous Hopf product)}
  \end{equation}
  lies around the borderline of behavior with respect to the inner variations, see Remark \ref{RemarknHarmonics}
  on $\,n\,$-dimensional variant of (\ref{BorderlineEquation1}). We call such solutions the \textit{singular Hopt harmonics}.
\subsection{Proof of Corollary \ref{theSingularCase}}\begin{proof}
 In the singular case formula  (\ref{ExpansionFormula}) simplifies as,
\begin{equation}\label{ReducedIdentity}
\mathcal E [H] - \mathcal E[h] \;=\; \int_{\Omega}  \frac{ 2 \, ( |h_\xi|^2 \,+\, | h_{\bar \xi} |^2 ) \;\abs{z_{\bar \xi}    }^2 \, \;\textbf d \xi   }{    \abs{z_\xi }^2 -  \abs{z_{\bar \xi}    }^2       }  \; \geqslant 0\,.
\end{equation}
 Hence Corollary \ref{theSingularCase} is immediate.
 \end{proof}

  The identity \ref{ReducedIdentity} also tells us that we have equality $\,\mathcal E(H) = \mathcal E(h)\,$  iff $\, z_{\bar \xi}(\xi) \cdot Dh(\xi) \equiv 0\,$. In terms of the inverse map $\,\xi = \xi(z)\,$ this condition reads as $\, \xi_{\bar z}(z) \cdot Dh(z) \equiv 0\,$.
Suppose for the moment  that
 $\,Dh(z) \not = 0\,$ almost everywhere.  Then all inner variations ($\xi \not \equiv 0\,$) strictly increase the energy. Indeed, otherwise we would have $\,\xi_{\bar z} (z) \equiv 0\,$ so the function $\,\xi = \xi(z)\,$ would be holomorphic and, being equal to $\,z\,$ near the boundary of $\,\Omega\,$, would be identically equal to $\,z\,$, resulting in no change of variables.\\
 Examples abound in which the Hopf product vanishes.

  \subsection{Origami Folding}
  Surprisingly, in  \cite{IVV} there has been constructed a Lipschitz map  $\,h : \mathbb C \into \mathbb C\,$ which vanishes in the lower-half plane $\,\mathbb C_- \bydef\{ \xi : \im \xi \leqslant 0\,\}\,$ and is a piecewise linear isometry on the upper-half plane $\,\mathbb C_+ \bydef\{ \xi : \im \xi \geqslant 0\,\}\,$. Precisely,  $\,\mathbb C_+\,$ has been triangulated so that on each of its triangles the differential $\,\textnormal d h   =  h_\xi \,\textnormal d \xi \,+ \,h_{\bar \xi} \, \textnormal d {\bar \xi}\,$  assumes one of the following six constant values.

  \begin{equation}\label{boundcond}
\begin{split}
\textnormal d h = \,\begin{cases}\textnormal d \xi\,,\,\;\,\,i \,\textnormal d \xi\,,\,\,\; -i\,\textnormal d \xi \,\qquad \mbox{(in which case $\,h\,$ is orientation preserving)}  &  \\
- \textnormal d \bar\xi\,,\, - i\,\textnormal d \bar\xi\,\,,\,\, i\,\textnormal d \bar\xi\,\, \;\;\mbox{( where $\,h\,$ is orientation reversing, foldings)}
\end{cases}\end{split}
\end{equation}
An interested reader is referred to an explicit construction by Formula (1.7) in Proposition 1 of \cite{IVV}.  Thus, at almost every $\,\xi\in \mathbb C\,$, we have either $\,h_\xi = 0\,$ or $\,h_{\bar\xi}  = 0\,$. Therefore, the Hopf product  vanishes  almost everywhere in $\,\mathbb C\,$. The change of orientation  of $\,h\,$ in $\,\mathbb C_+\,$ occurs more and more frequently when one approaches the common boundary $\,\partial \mathbb C_+ = \partial \mathbb C_- = \mathbb R\,$, \\
We have the following formulas:
 \begin{equation}
 \begin{split}
|\,h_\xi(\xi) \,|^2 \;+\, |h_{\bar \xi} (\xi)\,|^2\,\equiv  \; |\,J_h(\xi)\,| \,=\, \, \begin{cases}  1\;\;\textnormal{in}\;\;\mathbb C_+  &  \\
0\;\;\textnormal{in}\;\;\mathbb C_-
\end{cases} \end{split}
\end{equation}

 $$
\mathscr E[h] \;   \,=\,  \int_{\Omega_+} \left (|\,h_\xi(\xi) \,|^2 \;+\, |h_{\bar \xi} (\xi)\,|^2\,\right) \, \textbf{d} \xi\;\;=\;\; |\,\Omega_+ |\,,\;\;\textnormal{where}\;\; \Omega_+  \bydef \, \Omega \cap \mathbb C_+
$$

Consider an arbitrary  change of variables    $\,\xi = \xi(z)\,$  in $\,\Omega\,$  that equals $\,z\,$ near $\, \partial \Omega\,$.     Formula  (\ref{ExpansionFormula}), with $\,H(z) \bydef h(\xi(z))\,$,  cuts down considerably to:

\begin{equation}
\mathscr E [H] - \mathscr E[h] \;=\; \int_{\Omega_+}  \frac{ 2 \, \abs{z_{\bar \xi}    }^2 \, \;\textbf d \xi   }{    \abs{z_\xi }^2 -  \abs{z_{\bar \xi}    }^2} \geqslant  0.
\end{equation}
Equality occurs if and only if $\, z_{\bar \xi} \equiv 0\,$ on $\,\Omega_+\,$, meaning that $\,z = z(\xi) \,$ is holomorphic on $\,\Omega_+\,$. It then follows (by unique continuation property) that $\,z(\xi) \equiv \xi\,$ on $\,\Omega_+\,$.
\begin{remark} A natural question to ask is whether it is possible that, in spite of a  change of variables in $\,\Omega\,$,  the equation  $\,\mathscr E [H] \,=\,  \mathscr E[h]\,$ forces  $\,H\equiv  h \,$ on $\,\Omega\,$  .  The answer is "yes". To see such a possibility look at $\,H(z(\xi))  \equiv h(\xi)\,$ where the change of variables is given by the rule $\,z(\xi) = \xi\,$ for $\,\xi \in \mathbb C_+\,$,  so $\,H(\xi) = h(\xi)\,$, and  $\,z(\xi) = \xi\, +\,\eta(\xi)\,$ for $\,\xi \in \mathbb C_-\,$ with $\,\eta \in \mathscr C^\infty_0(\Omega_-)\,$. In this latter case, regardless of the choice of $\, \eta\,$,  both functions $\,H(\xi)\,$ and $\,h(\xi) \,$ vanish on $\,\Omega_-\,$.
\end{remark}
There is quite a general way to construct singular Hopf harmonics; typically, these are  piecewise holomorphic/antiholomorphic functions. The orientation of $\,h\,$ changes when passing through the   adjacent pieces of $\,\Omega\,$.

\subsection{Reflections about Circles}  Consider  a multiply connected  domain $\,\mathbb U\,$ with $\,(n-1)\,$ discs as bounded components of its complement, and the unit circle as its outer boundary, see Figure  \ref{CircReflections} on the left.  Reflect $\,\mathbb U\,$ about its inner boundary circles. This gives us $\,(n-1) \,$ circular domains $\,\mathbb U_1\,, ... ,\, \mathbb U_{n-1}\,$, each of connectivity $\,n\,$. Their outer boundaries are just the inner boundary circles of $\,\mathbb U\,$.
Next we reflect each $\,\mathbb U_1\,, ... ,\, \mathbb U_{n-1}\,$
about its own inner boundary circles. This gives us  $\,(n-1)^2\,$ circular domains of connectivity $\,n\,$, say $\,\mathbb U_{ij}\,$ with $ \, i, j = 1,2 ,..., n-1\,$, see Figure \ref{CircReflections} on the right.
 Continuing this process indefinitely, we cover the entire unit disc $\,\mathbb D\,$, except for a Cantor type limit set  $\,\mathcal C\,$, most desirably of zero measure.  Precisely, we have
$$\,\mathbb D \setminus\mathcal C = \bigcup_{i=1}^{n-1} \mathbb U_i \,\cup\, \bigcup_{i,j = 1}^{n-1} \mathbb U_{ij} \,\cup\, \bigcup_{i,j,k = 1}^{n-1} \mathbb U_{ijk}\,\cup ... \, $$
Our construction of the vanishing Hopf product $\,h_z \overline{h_{\bar z}} \equiv 0\,$ begins with an antiholomorphic function $\,\varphi(z) = \bar z\,$ in $\,\mathbb U\,$. We extend $\, \bar z\,$  to $\,\mathbb U_1\,\cup \, ... \,\cup \, \mathbb U_{n-1}\,$ by orientation reversing inversions  about the inner boundary circles of $\,\mathbb U\,$, respectively. This gives us orientation preserving linear fractional functions $\,\varphi_1 :\mathbb U_1 \onto \mathbb U\,, ... ,\, \varphi_{n-1} :\mathbb U_{n-1} \onto \mathbb U\,$. They  admit further circular inversions. Accordingly, we extend each $\,\varphi_i, \; i = 1,2, ... , n-1\,$ to $\,\mathbb U_{i1}\,\cup\,\mathbb U_{i2}\,\cup \, ... \,\cup  \mathbb U_{i, n-1}\,$ via the inversions about inner boundaries of $\,\mathbb U_i .\,$

Next, for every $\,i\,$,  we perform inversions about each inner boundary circle of $\,\mathbb U_i\,$. This gives us orientation reversing linear fractional transformations $\, \varphi _{i1} : \mathbb U_{i1} \onto \mathbb U_i\,,\; ...\; \varphi_{i,n-1} \onto \mathbb U_i\,$. Continuing this process indefinitely, we arrive at a map $\, h : \mathbb D \setminus \mathcal C  \onto \mathbb D \setminus \mathcal C \,$ with vanishing Hopf product, see also Figure \ref{MoreReflections} for more reflections.  The change of orientation  of $\,h\,$  occurs more and more frequently once we approach the limit set $\,\mathcal C\,$.
However, in general the energy of $\,h\,$ need not be finite.
\begin{figure}[!h]
\begin{center}
\includegraphics*[height=2.5in]{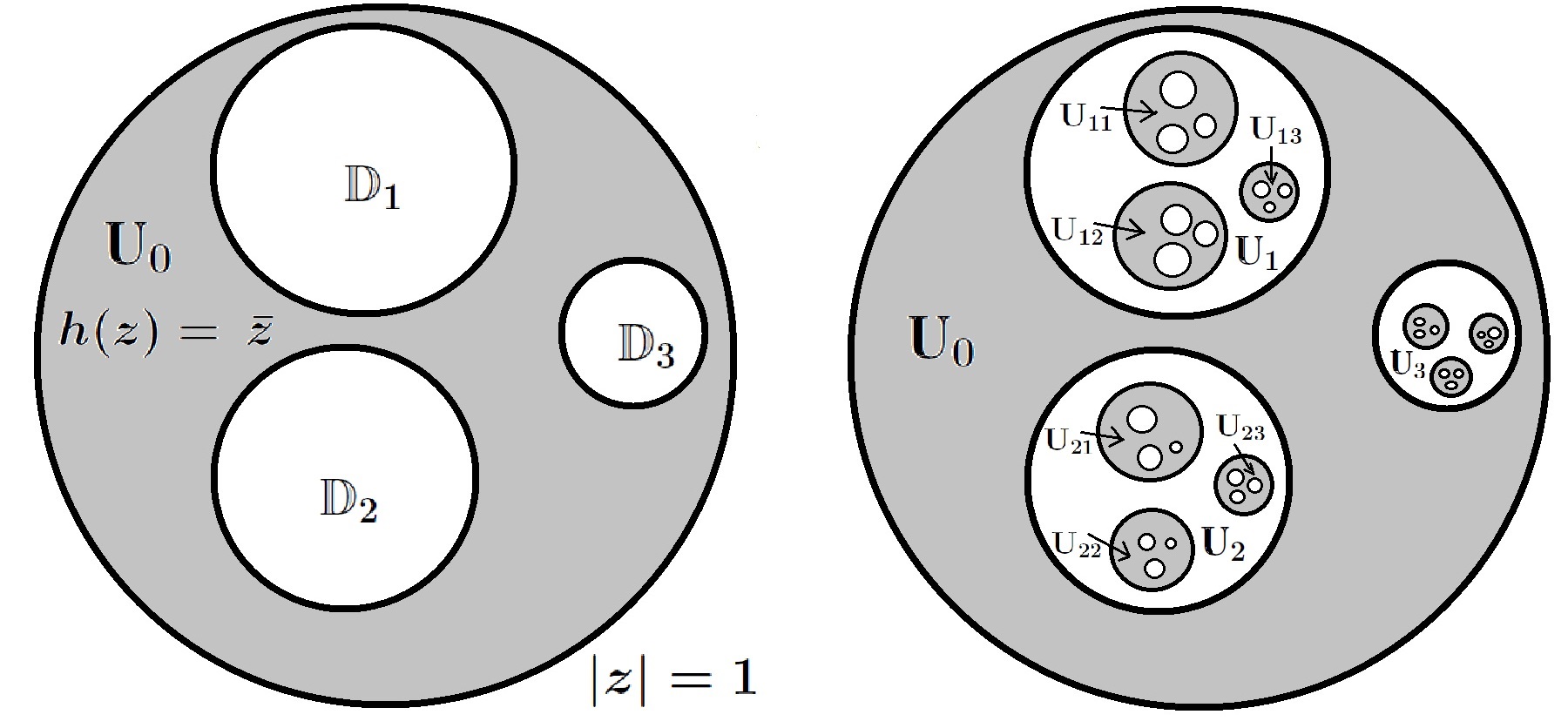}
\caption{Circular reflections result in a vanishing Hopf product.} \label{CircReflections}
\end{center}
\end{figure}

\begin{figure}[!h]
\begin{center}
\includegraphics*[height=2.5in]{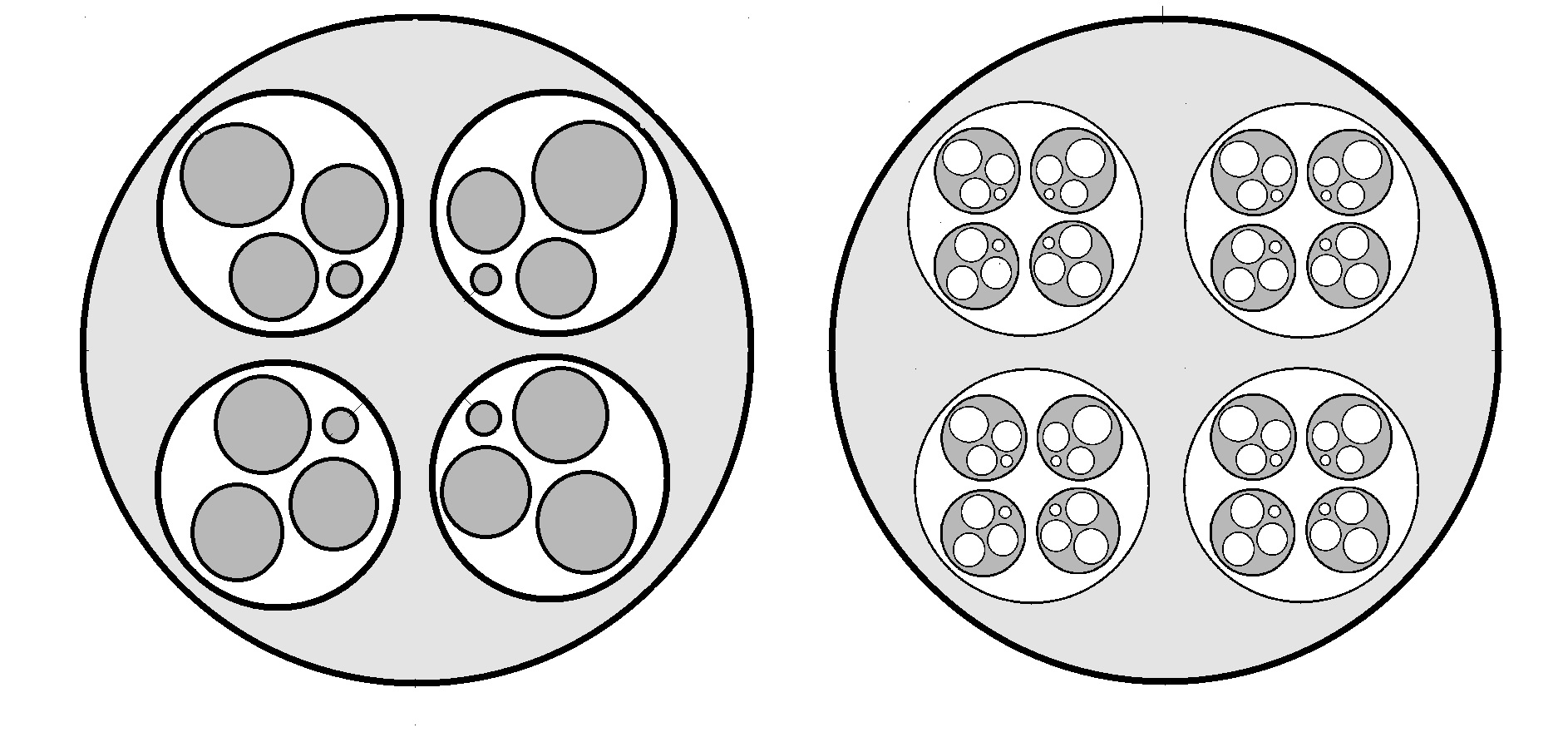}
\caption{Circular reflections of higher connectivity.}\label{MoreReflections}
\end{center}
\end{figure}

The simplest such a construction of finite energy  can be furnished via reflections about concentric circles, see Figure \ref{CircularReflections}.
For this purpose, we decompose the punctured disk into annuli $\, \mathbb D \setminus\{0\}\, =\, \mathbb A_1 \,\cup\, \mathbb A_2\,\cup\, ...\, \cup \mathbb A_n\,\cup\; \cdots\,$, where   $\,\mathbb A_n = \{ z \in \mathbb C : r_{n+1} \leqslant |z| < r_n\,\}\,$, with $\, r_n = n^{-2}\, $, for $\, n = 1, 2, ...\,$. We define the map $\,h = h(z)\,$ in the annulus $\,\mathbb A_n\,$ by the rule
 \begin{equation}
\begin{split}
h(z) = \,\begin{cases} n\bar z \,,\;\; \textnormal {for}\; \rho_n \leqslant |z| \leqslant r_n \;, \textnormal{where} \; \rho_n = \sqrt{\frac{1}{n(n+1)^3 }}\;\;\;\; ( \textnormal{thus}\;\; h_z \equiv 0 )  &  \\
\frac{1}{(n+1)^3 \;z} \;,\;\;\textnormal {for}\;\; r_{n+1} \leqslant |z| \leqslant \rho_n\, \;\;\;(\textnormal{thus}\; h_{\bar z} \equiv 0\;)
\end{cases}\end{split}
\end{equation}
The energy of $\,h\,$ in the annulus $\,\mathbb A_n\,$ is estimated as follows
\begin{equation}
\begin{split}
&\quad\quad\quad\quad\quad\mathscr E_{\mathbb A_n} [h] = \\ & \int_{r_{n+1} \leqslant |z| \leqslant \rho_n} |h_{z}|^2 \, \textnormal{\textbf{d}}z\;+\int_{\rho_n \leqslant |z| \leqslant r_n} |h_{\bar z}|^2 \, \textnormal{\textbf{d}}z \;\; \; \leqslant \frac{\pi}{(n+1)^2}  + \frac{\pi}{n^2} \,, \;\textnormal{respectively.}
\end{split}
\end{equation}
Summing up these estimates , we obtain $\,\mathscr E_\mathbb D [h]   < 2 \pi \sum_1^\infty \frac{1}{n^2} = \frac{\pi^3}{3}.\,$

\begin{figure}[!h]
\begin{center}
\includegraphics*[height=2.5in]{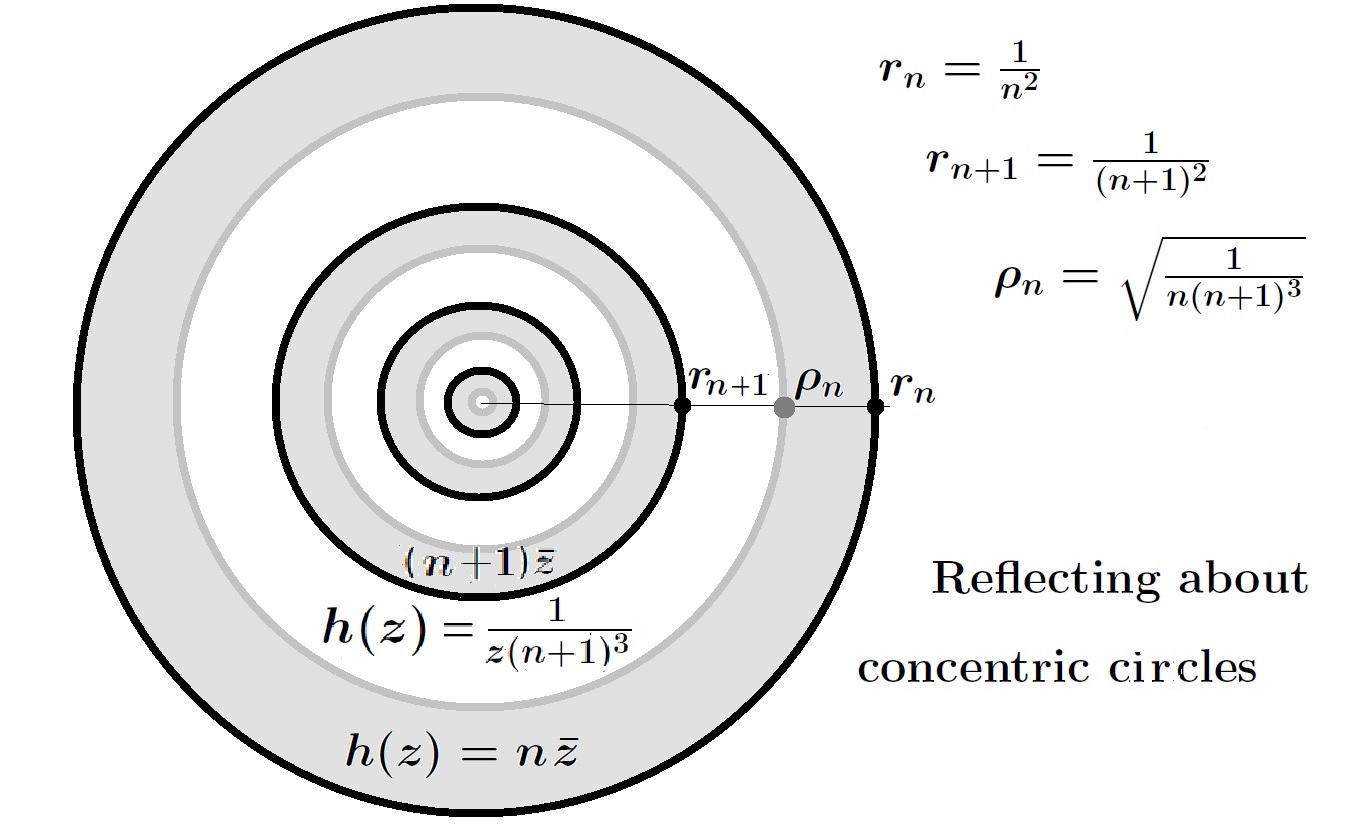}
\caption{Construction of a vanishing Hopf product for a map of finite energy.}\label{CircularReflections}
\end{center}
\end{figure}
\begin{remark} \label{RemarknHarmonics}
The theory of $\,n\,$-dimensional quasiconformal mappings is concerned with mappings $\,h : \mathbb X   \onto \mathbb Y\,$ of finite \textit{$\,n\,$-harmonic energy}, also called conformal energy.
\begin{equation}
\mathscr E[h] \;\bydef \int_\mathbb X  |Dh(x)|^n \,\textnormal d x     \; < \infty.
\end{equation}
Let us see how the associated variations might look like by analogy with the complex case.
The outer variation results in the \textit{$\,n\,$-harmonic equation}
\begin{equation}
\textnormal {div}\,  | Dh |^{n-2} Dh  \equiv 0
\end{equation}
The inner variations bring us to what we call \textit{Hopf $\,n\,$-harmonics} ~\cite{IOan}. These are  $\,\mathscr W^{1,n}_{\textnormal{loc}}\,$- solutions of the equation
\begin{equation}
\textnormal {div}\,  \left( | Dh |^{n-2} \Big[ \;D^*h\cdot Dh \; -\;\frac{1}{n}\, |Dh|^2\,\textbf{I} \;\Big]  \right)  \;=    0
\end{equation}
The radial squeezing $\,h(x) = \frac{x}{|x|}\,$ turns out to be Hopf $\,n\,$-harmonic in an annulus, but not $\,n\,$-harmonic.
An exact analogue of the singular Hopf harmonic equation (\ref{BorderlineEquation1}) \, takes the form,
\begin{equation}
 D^*h\cdot Dh \; -\;\frac{1}{n}\, |Dh|^2\,\textbf{I} = 0.
\end{equation}
Among other solutions are the conformal inversions about the $\,(n-1)\,$-spheres, both orientation reversing and orientation preserving.  All our   planar constructions presented above can be carried over to singular Hopf $\,n\,$-harmonics as well.
\end{remark}

\subsection{Solutions that are nowhere holomorphic and nowhere antiholomorphic}
We shall construct a Lipschitz solution to the equation $\,h_z \overline{h_{\bar z}}  =  0\,$ in $\,\mathbb C \,$ which is neither holomorphic nor antiholomorphic   in any open subset of $\,\mathbb C\,$. When constructing $\, h : \mathbb C \rightarrow \mathbb C\,$ we shall be dealing with two measurable sets \\

$
\mathbb F_+ \bydef \{ z ;\; \frac{\partial h}{\partial z} =  0 \} \;,\;\textnormal{and}\;\; \mathbb F_- \bydef \{ z ;\; \frac{\partial h}{\partial \bar z} =  0 \} \;\;\;
$\\

Here $\,z\,$ runs over the points of differentiability of $\,h\,$; thus a set of full measure in $\,\mathbb C\,$.
\begin{proposition} \label{StrangeHarmonic}The construction reveals the following properties.
\begin{itemize}
\item  The sets  $\,\mathbb F_+\,$ and $\,\mathbb F_- \,$ are "measure disjoint"; that is, $\,\mathbb F_+ \cap \mathbb F_-\,$ has zero measure.
    \item The union $\,\mathbb F_+ \cup \mathbb F_-\,$ has full measure on $\,\mathbb C\,$.
    \item Both $\,\mathbb F_+\,$ and $\,\mathbb F_-\,$ are "measure dense" in $\,\mathbb C\,$; meaning that,\\

    $  |\mathbb F_+ \cap \Omega |  > 0 \,\;\textnormal{and}\; |\mathbb F_- \cap \Omega |  > 0 \,, \;\textnormal{for every open set}\;\Omega \subset \mathbb C. $
\end{itemize}
\end{proposition}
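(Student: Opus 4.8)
The plan is to build $\,h\,$ by an explicit self-similar \emph{origami refinement}, realizing $\,h\,$ as a laminate between the two linear ``wells''
\[
\mathcal{K}_- = \{\,Dh : h_{\bar z} = 0\,\}\ \text{(conformal matrices)},\qquad
\mathcal{K}_+ = \{\,Dh : h_z = 0\,\}\ \text{(anticonformal matrices)}.
\]
The elementary building block is a fold across a segment: a Lipschitz map that equals a similarity $\,e^{i\alpha}z + a\,$ on one side and an orientation-reversing similarity $\,e^{i\beta}\bar z + b\,$ on the other. Such a continuous gluing is possible precisely because an orientation preserving and an orientation reversing similarity of equal magnitude are rank-one connected: for $\,C \in \mathcal{K}_-\,$ and $\,A \in \mathcal{K}_+\,$ one computes $\,\det(C-A) = \tfrac12\abs{C}^2 - \tfrac12\abs{A}^2\,$, which vanishes exactly when the two differentials have equal Hilbert–Schmidt norm. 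Along the fold line the two pieces then agree and $\,\abs{Dh}\,$ stays continuous, so the block is globally Lipschitz and solves $\,h_z\overline{h_{\bar z}} \equiv 0\,$ while genuinely reversing orientation on one piece. I would take for this block the piecewise isometric folding map of \cite{IVV} recalled above, whose differential already assumes three orientation preserving and three orientation reversing values, so that a single insertion seeds \emph{both} $\mathbb{F}_+$ and $\mathbb{F}_-$ with positive measure.

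Starting from $\,h_0(z)=z\,$, I refine iteratively. At stage $\,k\,$ the map $\,h_k\,$ is, on each cell of a partition, a fixed similarity (conformal or anticonformal); inside these cells I select a family of disjoint subsquares, dense in $\,\mathbb C\,$ at scale $\,2^{-k}\,$, and replace $\,h_k\,$ there by a rescaled copy of the building block matching the boundary values on each subsquare frame. Choosing the refined subsquares to occupy a fixed fraction $\,\lambda < 1\,$ of each cell, and placing their union densely, yields two competing effects: every open set eventually receives cells of both orientations of positive measure, while the set of points refined at infinitely many stages has measure $\,\le \prod_k \lambda = 0\,$, so almost every point is eventually \emph{frozen} in a cell on which $\,h\,$ is one fixed similarity. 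Since each fold leaves $\,\abs{Dh}\,$ unchanged (isometric pieces), the maps $\,h_k\,$ converge uniformly with a uniform Lipschitz bound, giving $\,h\in\Lip(\mathbb C)\,$.

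From this the four assertions follow. At almost every (frozen) point $\,h\,$ is differentiable with $\,Dh\in\mathcal{K}_-\cup\mathcal{K}_+\,$, hence $\,h_z\overline{h_{\bar z}}=0\,$ a.e. and $\,\mathbb{F}_+\cup\mathbb{F}_-\,$ has full measure; because every frozen similarity is nontrivial ($Dh\neq 0$), the overlap $\,\mathbb{F}_+\cap\mathbb{F}_- = \{Dh=0\}\,$ sits inside the fold lines and is null. The dense insertion of both orientations makes each of $\,\mathbb{F}_+,\mathbb{F}_-\,$ measure dense, and this same density gives the nowhere-holomorphy: were $\,h\,$ holomorphic on an open $\,U\,$ we would have $\,\abs{\mathbb{F}_+\cap U}=0\,$, contradicting measure density, and symmetrically for antiholomorphy. \textbf{The main obstacle} I anticipate is the joint bookkeeping in the iteration, namely reconciling \emph{dense} refinement (needed for measure density and for the nowhere-(anti)holomorphic conclusion) with \emph{almost-everywhere stabilization} (needed so the limiting differential lands exactly in $\,\mathcal{K}_-\cup\mathcal{K}_+\,$, not merely near it, and so the Lipschitz bound stays uniform), all while keeping the folds geometrically compatible across cell frames at every scale; this is what forces the explicit self-similar control rather than an abstract convex-integration limit.
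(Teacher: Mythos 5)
Your strategy---an iterated, self-similar origami laminate between the conformal and anticonformal wells---breaks down at the insertion step, and the failure is not the ``bookkeeping'' you flag at the end but a genuine rigidity obstruction. At each stage you propose to replace, inside a subsquare $Q$, an ambient similarity $g(z)=e^{i\alpha}z+a$ (or $e^{i\beta}\bar z+b$) by a rescaled copy of a piecewise \emph{isometric} folding block that agrees with $g$ on $\partial Q$. No such nontrivial block exists: a piecewise isometry is $1$-Lipschitz, so $g^{-1}\circ h$ is a $1$-Lipschitz map of the convex set $Q$ equal to the identity on $\partial Q$, and for any interior point $x$ lying on a chord $[y_1,y_2]$ of $\partial Q$ the chain $|y_1-y_2|\leqslant |y_1-(g^{-1}\circ h)(x)|+|(g^{-1}\circ h)(x)-y_2|\leqslant |y_1-x|+|x-y_2|=|y_1-y_2|$ forces $(g^{-1}\circ h)(x)=x$. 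Hence every insertion is trivial and your iteration never leaves $h_0(z)=z$. Relaxing to non-isometric pieces does not rescue the scheme: the rank-one condition $\det(C-A)=\frac12\abs{C}^2-\frac12\abs{A}^2=0$ that you correctly compute forces equal conformal moduli across every fold line, so the local Lipschitz constant of the inserted map cannot exceed that of the boundary similarity, and the same rescaled argument applies. (The map of \cite{IVV} evades this only because it is a global map that degenerates to a constant on a half-plane; it is not a compactly supported perturbation of a similarity, and cannot be used as one.) Producing a nontrivial solution of $h_z\overline{h_{\bar z}}=0$ with affine boundary data would require genuinely increasing the gradient in the interior, i.e.\ exactly the convex-integration machinery you set out to avoid. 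A secondary issue: with a fixed refined fraction $\lambda<1$ at every stage, the set of points refined infinitely often need not be null (that requires summability of the fractions, not $\prod_k\lambda=0$), so even the ``freezing'' claim is unjustified as stated.

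The paper's proof avoids all two-dimensional gluing by a one-line, one-dimensional construction: $h(z)=u(x)+iy$, where $u(x)=\int_0^x\chi(t)\,\textnormal{d}t$ and $\chi=\pm1$ on a pair of ``well-distributed'' measurable sets $E_\pm\subset\mathbb R$, each of which meets every interval in a set of positive measure \cite{Simoson,Rudin}. Then $h_z=\frac12(u'(x)+1)$ and $h_{\bar z}=\frac12(u'(x)-1)$, so at almost every point exactly one of $h_z,h_{\bar z}$ vanishes; the sets $\mathbb F_\pm$ are products of the (density points of the) sets $E_\mp$ with $\mathbb R$, and all three assertions of the Proposition reduce to the stated one-dimensional properties of $E_\pm$, with differentiability supplied by Lebesgue differentiation of $\int_0^x\chi$. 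This is in effect a single laminate with fold lines parallel to the $y$-axis, built in one shot from a measurable set rather than by iterated insertion---which is precisely why it needs no boundary matching and encounters no rigidity.
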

\begin{proof}
The map in question will be  defined by the rule $\, h(z) =  u(x)  +  i  y  \;$, for $\,z =  x+ i y\,$.  Here $\,u = u(x)\,$ is a Lipschitz function on $\,\mathbb R\,$ whose derivatives at the points of differentiability assume only two values $\,\pm 1\,$, say $\,+1\,$ on a set $\,E_+\,$ and $\,-1\,$ on a set $\,E_-\,$. Moreover,
\begin{itemize}
\item $\,|\,E_+ \cap  E_- | = 0\,$,  and the union $\,E_+ \cup E_- \,$ has full measure in $\,\mathbb R. \,$
\item  The sets $\,E_+\,$ and $\,E_-\,$ are "measure" dense on $\,\mathbb R\,$; meaning that $\,|E_+\cap I | > 0\,$ and $\,|E_-\cap I | > 0\,$, for every open interval $\,I\subset \mathbb R\,$.
\end{itemize}
A construction of such sets $\,E_+\,$ and $\,E_-\,$, known as well-distributed measurable sets, can be found in  \cite{Simoson, Rudin}. Having those sets in hand we define:
\begin{equation}\
\begin{split}
 u(x) \bydef \int_0^x \,\chi(t)\,\textnormal{d} t\;,\;\;\textnormal{where}\;  \chi(t) =
\,\begin{cases} +1 \;, \; \textnormal {on}\; E_+\;  &  \\
-1 \;, \;\textnormal {on}\;\;E_-
\end{cases}\end{split}
\end{equation}
Obviously $\,u\,$ is Lipschitz continuous, so differentiable almost everywhere. Cut slightly those sets to obtain:
\begin{itemize}
\item  $\,{{E}^{'}_+}\,$ - the set of density points of $\,E_+\,$ at which $\,u\,$ is differentiable.
\item  $\,E_-^{'}\,$ - the set of density points of $\,E_-\,$ at which $\,u\,$ is differentiable.
\end{itemize}
We readily infer from these definitions that
\begin{equation}\nonumber
\begin{split}
 u^{,}(x) \,=\, \lim_{\varepsilon \rightarrow 0}\;\frac{u(x +\varepsilon) - u(x)}{\varepsilon}\;= \lim_{\varepsilon \rightarrow 0} \frac{1}{\varepsilon} \int_x^{x+\varepsilon} \chi(t)\,\textnormal d t =
\,\begin{cases} +1 \;, \; \textnormal {on}\; E^{'}_+\;  &  \\
-1 \;, \;\textnormal {on}\;\;E^{'}_-
\end{cases}\end{split}
\end{equation}
as desired.\\
Now the sets in Proposition \ref{StrangeHarmonic} are given by $$\,\mathbb F_+ \bydef\, E_+ ' \times \mathbb R\,\;\;\textnormal{and}\;\;\,\mathbb F_-\bydef\,  E_- ' \times \mathbb R\ .\,$$
The computation of complex derivatives of $\,h\,$ runs as follows:
$$ h_z \;=\; \frac{1}{2} ( h_x - i h_y)  = \frac{1}{2} [ u'(x) + 1 ]  =  0 \;\;\textnormal{on}\;\; \overline{\mathbb Z} \,. $$
$$ h_{\bar z} \;=\; \frac{1}{2} ( h_x +\,i h_y)  = \frac{1}{2} [ u'(x) - 1 ]  =  0 \;\;\textnormal{on}\;\; \overline{\mathbb Z} \,.$$

Furthermore,
$$  |Dh(z) |^2  \bydef |h_z|^2  + |h_{\bar z}|^2  =  \frac{1}{2} \left( [u'(x)]^2 \;+ \,1 \right) \,\equiv  1\;,\;\textnormal{almost everywhere}\,. $$

$$  J_h(z)  =  |h_z|^2  - |h_{\bar z}|^2  =  u'(x) \pm 1 \;,\;\textnormal{almost everywhere}\,. $$
\end{proof}
\begin{remark} Analogously, in higher dimensions, one may consider the singular Hopf $\,n\,$-harmonic map
\begin{equation}
h(x_1, x_2,\, ... ,\, x_n ) \; = (u(x_1) , x_2,\, ... \,, x_n)\;,\;\; D^*h\cdot Dh \; = \frac{1}{n} | Dh|^2 \,\textbf{I}.
\end{equation}
\end{remark}
\begin{remark}
  Complete description of $\,\mathscr W^{1,2}_{\textnormal{loc}} (\Omega)\,$ -solutions to the singular Hopf equation $\,h_z \overline{h_{\bar z}}  = 0 \,$ remains open.
  \end{remark}

\section{Proof of Lemma \ref{InequalityForH} in case $\,\mathcal H(\xi)  = \mathcal A^2(\xi)\,$} \label{Section5}
For the sake of clarity, before we present the full proof of Lemma \ref{InequalityForH}, let us first demonstrate the case when $\,\mathcal H\,$ admits a continuous  branch of the square root, say $\, \sqrt{\mathcal H} =   \mathcal A\,$.

Consider a mapping $\,f \bydef \eta \, \mathcal A \,: \,\Omega \rightarrow \mathbb C\,$. As a starting point,  we record the identity
\begin{equation}\label{AnIdentity1}
\begin{split}\int _\Omega \mathcal A \mathcal A'\, \eta\, \eta_{\bar \xi} \,\, \textbf{d} \xi &= \frac{1}{4} \int _{\Omega} [\mathcal H'\, \eta^2\, ]_{\bar \xi} \, \, \textbf{d} \xi = \frac{i}{8} \,\int _\Omega [\mathcal H'\, \eta^2\, ]_{\bar \xi} \,\,\textnormal d \xi \wedge \textnormal d \bar{\xi} \\& =   \frac{-i}{8} \int _\Omega \textnormal d \left (\mathcal H'\, \eta^2 \,\textnormal d \xi \right )  =   \frac{-i}{8} \int _{\partial \Omega}  \mathcal H'\, \eta^2 \, \textnormal d \xi \; = \, 0
\end{split}
\end{equation}
because $\, \mathcal H'\, \eta^2 = 0 \,\,\textnormal {on}\,\,  \partial \Omega\,$ .

Now the computation runs as follows:
\begin{equation}\label{Chain}
\begin{split}
 \int_\Omega   |\mathcal H(\xi)|  \; \abs{\eta_{\bar \xi}}^2
& = \,\int _{\Omega} | \mathcal A |^2 | \eta_{\bar \xi}|^2  = \,\int _{\Omega} | f_{\bar \xi}|^2 \\
& \; =  \frac{1}{2}  \,\int _{\Omega} \left (| f_{\bar \xi}|^2 \,+\, | f_{\xi}|^2\right)\; \;\;\;\big(\textnormal{due to} \,\int_\Omega J_f(\xi) \, \textbf{d} \xi = 0)\;\big) \\ & \geqslant \,\int _{\Omega} | f_{\bar \xi} f_{\xi}|  \geqslant \,\left |\,\int _{\Omega}  f_{\bar \xi} f_{\xi}\,\right | \\ &  = \left | \,\int _{\Omega}  \left(\mathcal A \,\eta_{\bar \xi} \right)\; \left(\mathcal A \eta_\xi \,+\, \mathcal A' \eta\right)\,\right | = \left |\,\int _{\Omega}  \left(\mathcal A \,\eta_{\bar \xi} \right)\; \left(\mathcal A\eta_\xi \,+\, \mathcal A' \eta\right)\,\right | \\ & = \left | \, \int _{\Omega}  \left(\mathcal A\,\eta_{\bar \xi} \right)\; \left(\mathcal A \eta_\xi \right)\,\right |\;\;\;\big(\textnormal{due to identity (\ref{AnIdentity1})}\;\big) \\ &  =\,  \left |\,\int _{\Omega}  \mathcal A^2 \,\eta_{\bar \xi} \; \eta_\xi\,\right |    = \,  \left |\,\int_\Omega \mathcal H(\xi)  \;\eta_\xi\, \eta_{\bar \xi}\,\right |
\end{split}
\end{equation}
as desired.

\section{A partition into Rectangles} \label{RectangularPartition}
For the full proof of  Lemma  \ref{InequalityForH},  we need additional geometric considerations to deal with the lack of continuous square root of the Hopf product $\,\mathcal H(\xi)  = h_\xi \overline{h_{\bar{\xi}}}\,$.

 Suppose we are given a domain $\,\Omega \subset \mathbb C\,$, a compact subset $\, \textit{\textbf{K }}\subset \Omega\,$ and a finite set $\,\mathcal Z = \{ z_1, z_2, ... , z_n\,\} \subset  \textit{\textbf{K}}\,$. In the applications $\,\mathcal Z\,$ will consist of zeros of a holomorphic Hopf product $\,\mathcal H(z) = h_z \overline{h_{\bar{z}}}\,$ . The goal is to construct disjoint simply connected domains $ R_1, \, R_2, \, ... \,,\, R_N\,$,  whose closures are contained in $\,\Omega\,$ and cover  $\,\textit{\textbf{K}}\,$.   In symbols,
$$ \;\;\textit{\textbf{K}} \subset  \overline{R_1} \,\cup \,\overline{R_2}\, \cup \,...\,\cup\overline{R_N} \,\subset \Omega \,\;\textnormal{and}\;\; R_\alpha\,\cap \, R_\beta\, = \emptyset\,\,\textnormal{for}\,\;  \alpha , \beta =  1,2, ... , N\,,\; \alpha \not =\beta\;$$
It will also be required that for every pair  $\,\{ R_\alpha , R_\beta \}_{\alpha \not =\beta} \,$ the intersection  $\,\overline{R_\alpha}\,\cap \overline{\, R_\beta}\,$ is either empty, a single point called \textit{corner} of the partition, or a $\,\mathscr C^1\,$-regular closed Jordan arc denoted by $\,\Gamma_{\alpha \beta} \bydef \overline{R_\alpha} \cap  \overline{R_\beta }\,$. This is the common side of $\, R_\alpha \,$ and $\, R_\beta \,$. We    refer to such $\, R_\alpha \,$ and $\, R_\beta \,$ as side-wise\,\textit{adjacent} domains. Furthermore, each point  $\,  z_1, z_2, ... , z_n\,$ is a corner of the partition and, as such, does not lie in any of the domains  $ R_1, \, R_2, \, ... \,,\, R_N\,$.
\begin{remark}
 For our purposes here,  the simplest way to build such a partition is to take for $ \,R_1,\, ... \,, R_N\,$ coordinate rectangles (sides parallel to the $\,x, y\,$  coordinate axes). However, for various specific purposes, the theory of critical horizontal and vertical trajectories of the Hopf quadratic differential $\, \mathcal H(z)\, \textnormal dz \otimes \textnormal dz\,$  (as sides of the domains $\,R_\alpha\,$)  gives us a tool of much wider applicability, see Section \ref{LengthArea}  and Figure \ref{Figure2}.
\end{remark}

\subsection{A Rectangular Partition}

 Choose and fix an $\,\varepsilon > 0\,$ small enough so that  $\,\textnormal{dist} (\textit{\textbf{K}} , \partial \Omega) >  2 \varepsilon\,$. As a first step, we divide $\,\mathbb R^2\,$ into squares of side-length $\,\varepsilon\,$ by cutting $\,\mathbb R^2\,$  along the horizontal lines $\, \{ (x, i\varepsilon) \in \mathbb R^2\, : x \in \mathbb R\,\}\,$, $\,i = 0, \pm 1, \, \pm 2,\,...\,$, and the vertical lines  $\, \{ (j\varepsilon, y) \in \mathbb R^2\, : y \in \mathbb R\,\}\,$, $\,j = 0, \pm 1, \, \pm 2,\,...\,$. This gives us an $\,\varepsilon\,$- mesh of Cartesian squares,
$$
\mathscr M_\varepsilon \,\bydef \, \{Q_{ij} \,\}_{i,j \in \mathbb Z} \, , \; \textnormal{where} \; Q_{ij} \,=\, \{ (x,y) \colon  \; i\varepsilon < x < i\varepsilon + \varepsilon\;\,,\, j \varepsilon < y < j\varepsilon  + \varepsilon \; \}
$$
It is not generally possible to  construct a mesh of Cartesian squares whose corners cover all points $\, z_1, z_2, ..., z_n\,$; we need additional (finite number)  horizontal and vertical cuts of $\,\mathbb R^2\,$.  Through every point $\, z_\nu = x_\nu +  i y_\nu\,\in \mathcal Z\,,\, \nu = 1, 2, ... , n\,$, there pass two lines: a horizontal line  $\,\{ (x, \, y_\nu) :\, x \in \mathbb R\,\}\,$,  and the vertical line $\, \{ (x_\nu \,, \,y ) : \,y \in \mathbb R\,\}\,$. Removing all these lines (additional cuts together with the ones for the $\,\varepsilon\,$- mesh of Cartesian squares),  leaves us a family of open rectangles.  Let us denote this family by $\,\mathscr M_\varepsilon(z_1, z_2, ... , z_n)\,$. Clearly, $\,\mathscr M_\varepsilon(z_1, z_2, ... , z_n)\,$ is a refinement of $\,\mathscr M_\varepsilon\,$. It then follows that each side of a rectangle in $\,\mathscr M_\varepsilon(z_1, z_2, ... , z_n)\,$ is shorter or equal to $\,\varepsilon\,$. Let us record this observation as:
$$
\;\textnormal{diam}\, R \leqslant \sqrt{2}\,\varepsilon  < 2\, \varepsilon\;,\; \textnormal{for every}\; R \in \mathscr M_\varepsilon(z_1, z_2, ... , z_n)\,.
$$
Therefore, whenever the closure $\,\overline{R}\,$ of a rectangle $\, R \in \mathscr M_\varepsilon(z_1, z_2, ... , z_n)\,$ intersects $\, \textit{\textbf{K}}\,$ it lies entirely in $\,\Omega\,$. Now comes the construction of the desired family $\,\mathscr F \bydef \{  R_1, \, R_2, \, ... \,, R_N\}\,$,

\begin{definition}{\label{FamilyF}} The family $\,\mathscr F \bydef \{  R_1, \, R_2, \, ... \,, R_N\}\,$ consists of all open rectangles in $\,\mathscr M_\varepsilon(z_1, z_2, ... , z_n)\,$ whose closers intersect $\, \textit{\textbf{K}}\,$\end{definition}

 \begin{figure}[!h]
\begin{center}
\includegraphics*[height=2.5in]{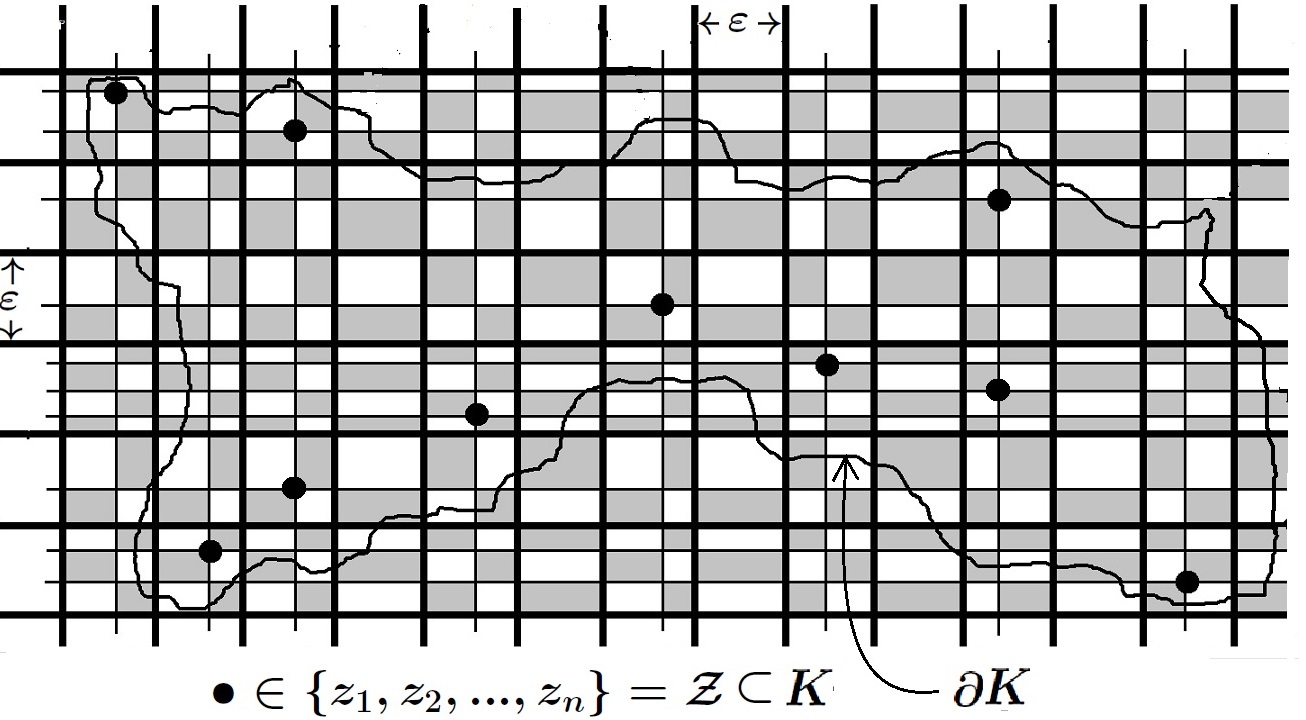}
\caption{The $\varepsilon\,$-mesh of squares in $\,\mathbb R^2\,$, its refinement $\,\mathscr M_\varepsilon(z_1, z_2, ... , z_n)\,$ and the family $\,\mathscr F\,$ of selected rectangles.}\label{Figu1}
\end{center}
\end{figure}
Let us take a look at the sides (horizontal and vertical) of  rectangles in $\, \mathscr F\,$. Every such a side, denoted in a generic way by $\,\Gamma\,$,   either lies entirely in $\, \Omega \setminus \textit{\textbf{K}}\,$ or is a common side of two adjacent rectangles, say $\, \Gamma =\overline{ R_\alpha} \cap \overline{R_\beta}\,$ for some $\, R_\alpha , R_\beta\,\in \,\mathscr F\,$. In this latter case there comes an issue of orientation.\\
Every rectangle $ \, R \in \mathscr M_\varepsilon( z_1, z_2, ... z_n )\,$ will be oriented positively with respect to the orientation of $\,\mathbb R^2\,$. This gives us the so-called positive (with respect to $\,R\,$) orientation of $\, \partial R\,$. Geometrically, traveling along $\, \partial R\,$ in the positive direction (counterclockwise) the rectangle $\, R\,$ remains on the left hand side.\\
Consider a pair of \textit{side-wise} adjacent rectangles $\, R_\alpha , R_\beta \in \mathscr M_\varepsilon( z_1, z_2, ... , z_n )\,$ and their common side $\,\Gamma =\Gamma_{\alpha \beta} \bydef \overline{R_\alpha} \cap  \overline{R_\beta }\,$.  When $\,\Gamma\,$ is positively oriented with respect to $\,R_\alpha\,$, we indicate it by writing  $\,\Gamma = \Gamma_\alpha^\beta\,$. Accordingly, $\,\Gamma_\beta ^\alpha\,$, being positively oriented with respect to $\,R_\beta\,$,  is negatively oriented  with respect to $\,R_\alpha\,$.  In other words, $\, \Gamma_\alpha^\beta\,$\,and $\,\Gamma_\beta ^\alpha\,$ have opposite orientation.
\begin{figure}[!h]
\begin{center}
\includegraphics*[height=2.5in]{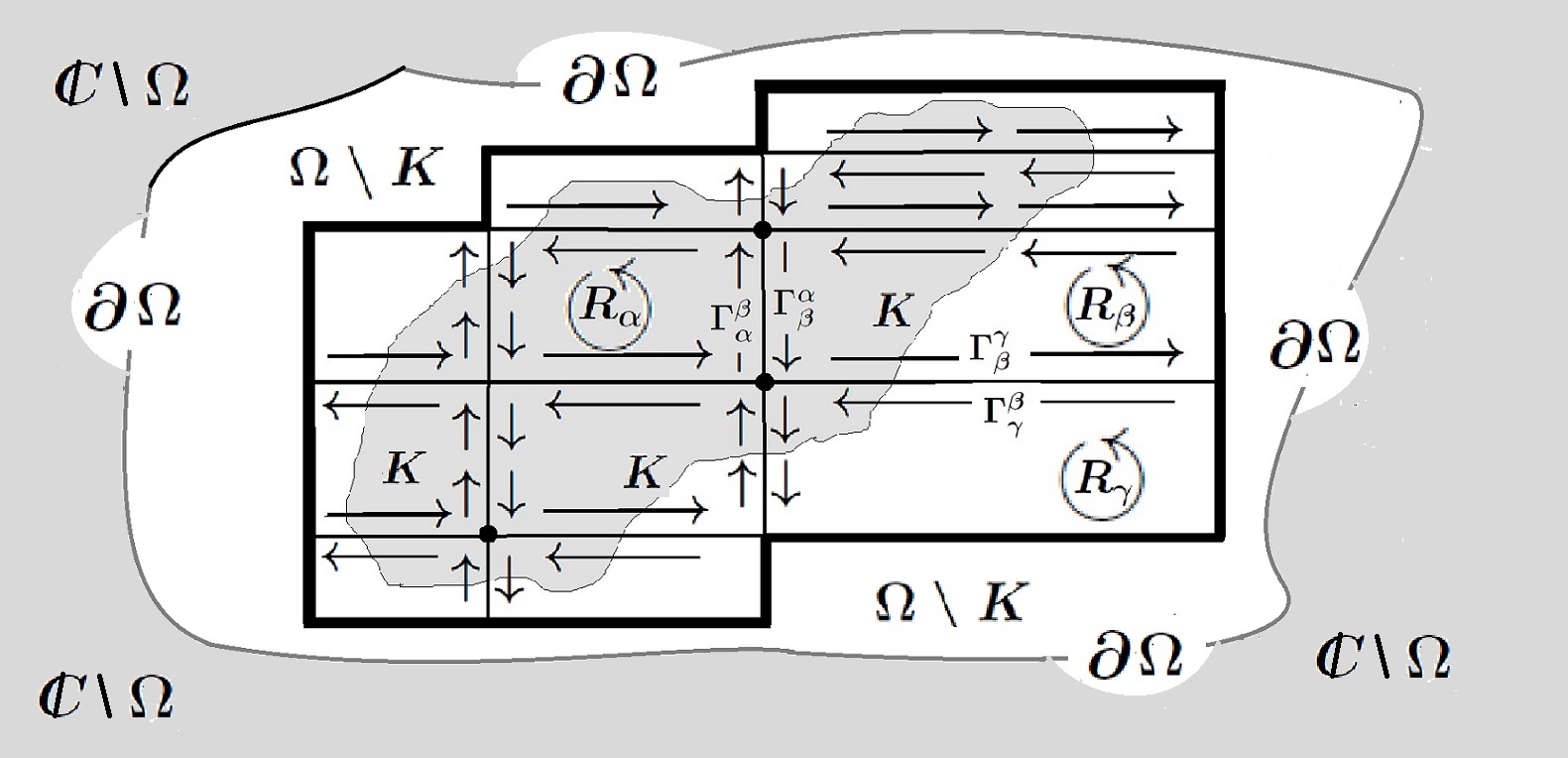}
\caption{Covering a compact subset $\,\textit{\textbf{K}} \Subset  \Omega\,$ by oriented rectangles  in $\,\Omega\,$. Their adjacent sides have opposite  orientation.}\label{Figure2}
\end{center}
\end{figure}

The above family $\,\mathscr F\,$ of oriented rectangles is particularly convenient when integrating an exact differential 2-form  $\, d\,\omega\,$,  where the 1-form $\omega\,$ can only be locally defined. This is typical when one needs to select locally defined branches of $\,\omega\,$  differing in sign. An analogy to taking square root of a holomorphic quadratic differential can be found, see also \cite{HubbardMasur}  for far reaching abstraction.  Let us look at a particular situation of this kind.

\subsection{2-valued mappings} \label{2valeudMappings}

 Recall that we are given a holomorphic function\\$\,\mathcal H \not \equiv 0\,$  in  $\, \Omega \subset \mathbb C \,$ and a complex valued function $\, \eta \in \mathscr C^\infty_0( \Omega)\,$ with compact support in $\, \textit{\textbf{K }}\Subset \Omega\,$. In particular,  $\,\mathcal H\,$ has only a finite number of zeros in  $\, \textit{\textbf{K }}\subset \Omega\,$, say $\,\mathcal Z = \{ z_1, z_2, ... , z_n\,\} \subset  \textit{\textbf{K}}\,$.
\\
 Suppose, as a starting point, that $\, \mathcal H \not = 0 \,$  in some simply connected Lipschitz subdomain $\,R \Subset  \Omega\,$. Thus, in particular, $\,\mathcal H\,$ admits a continuous branch of square root therein; precisely,  $\,H = \mathcal A^2\,$ for a function $\,\mathcal A \,$ that is continuous in $\,\overline{R}\,$ and holomorphic in $\,R\,$. We consider the mapping $\, f \bydef  \mathcal A \, \eta\, = \pm\sqrt{\mathcal H } \,\eta\, $.  Its Jacobian area-form  does not depend on the choice of the $\,\pm$sign for $\,\sqrt{\mathcal H}\,$ ; precisely,
 \begin{equation}
 2 i\, J_f(z)\,\mathbf d z \,= \, \textnormal d \overline{f}  \, \wedge \textnormal d f  \,= \, \textnormal d \, (\overline{f}  \, \wedge \textnormal d f)\; =\;\textnormal d \omega
 \end{equation}
 where $\,\omega\,$ is a  differential 1-form  given on $\,R\,$ by the rule
 \begin{equation}{\label{omegaform}}
 \omega \,\bydef \, \overline{f}  \, \wedge \textnormal d f\; =\;  \frac{ | \eta|^2\,\,\overline{\mathcal H} \,\textnormal d \mathcal H\;\,  +\;\, 2 |\mathcal H |^2\,\, \overline{\eta}\, \textnormal d  \eta }{2\,|\mathcal H |} \,\;
 \end{equation}
It should be noted that the latter expression defines a differential 1-form, still denoted by $\,\omega\,$, on the entire domain $\Omega \setminus \mathscr Z\,$ irrespective of which $\,\pm$ sign for $\,f\,$  is used.
 Also note that $\,\omega\,$ is bounded and $\,\mathscr C^\infty\,$-smooth in $\Omega \setminus \mathscr Z\,$. This makes it legitimate to apply integration by parts.
\begin{equation}
\iint_R\,  J_f(z)\,\mathbf d z = \frac{1}{2 i} \int_{\partial R} \,\omega \;\;\;\;(\partial R \,\,\textnormal { - oriented counterclockwise})
\end{equation}

 We shall now make use of the family  $\,\mathscr F = \{\, R_1, R_2, ...\,, R_N\,\}\,$  of rectangles as simply connected domains in which $\,\mathcal H \not= 0\,$.  On each $\,R \in \mathscr F\,$   we are at liberty to choose a continuous branch of  $\,\sqrt{\mathcal H}\,$. Once this is done, we obtain a family of mappings $\,f^\alpha : R_\alpha \rightarrow \mathbb C\,$, $\,\alpha = 1, 2, ... , N \,$, defined by the rule $\,f^\alpha = \sqrt{\mathcal H} \;\eta\,$ where the branch of $\,\sqrt{\mathcal H}\,$ depends on $\,\alpha\,$, whence the superscript $\,\alpha\,$.\\
 Before proceeding further in this direction, assume for the moment that  $\,\mathcal H\,$  admits continuous square root in the entire domain $\,\Omega\,$, so that  $\,f \in \mathscr C^\infty_0(\Omega)\,$.  Consequently,  $\,\iint _\Omega J_f(z) \,\mathbf d z  =  0\,$, where we recall that  $\,J_f(z) = |f_z|^2  \,-\, |f_{\,\overline{z}}|^2\,$. In our more general setting the above ideas still work to give similar identity.

 \begin{lemma}{\label{IntegralsOfJacobians}}
 Due to the cancellation of boundary integrals we have,
 \begin{equation} \label{SumIntegrals}
 \iint _{R_1} J_{f^1}(z) \,\mathbf d z  \;+\; \iint _{R_2} J_{f^2}(z) \,\mathbf d z \;+\; \cdots \;+\iint _{R_N} J_{f^N}(z) \,\mathbf d z  \; = \;0
 \end{equation}
 Equivalently,

 \begin{equation}\label{sumOfsquares}
 \iint _{R_1} |f^1_z|^2  \;+\; \cdots \;+\iint _{R_N} |f^N_z|^2   \; = \;\iint _{R_1} |f^1_{\,{\overline z}}|^2  \;+\; \cdots \;+\iint _{R_N} |f^N_{\, \overline{z}}|^2
 \end{equation}

 \end{lemma}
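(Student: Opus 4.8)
The plan is to recognize each summand as a boundary integral, via integration by parts, of the \emph{single} globally defined $1$-form $\omega$ from \eqref{omegaform}, and then to exploit the opposite orientations on shared edges together with the compact support of $\eta$.

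First, for each index $\alpha$ I would invoke the identity $2i\,J_{f^\alpha}(z)\,\mathbf d z = \mathrm d\omega$ recorded above, in which $\omega$ is the form \eqref{omegaform} that does \emph{not} depend on which branch of $\sqrt{\mathcal H}$ was selected on $R_\alpha$. Since $\omega$ is bounded and $\mathscr C^\infty$-smooth on $\Omega\setminus\mathscr Z$, and each zero $z_\nu$ sits only at a corner of the partition, Stokes' theorem applies on every Lipschitz rectangle and gives
\[
\iint_{R_\alpha} J_{f^\alpha}(z)\,\mathbf d z \;=\; \frac{1}{2i}\int_{\partial R_\alpha}\omega, \qquad \alpha = 1,2,\ldots,N.
\]

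Summing these $N$ identities, I would next sort the edges appearing in the $\partial R_\alpha$ into two kinds. An edge $\Gamma$ either lies in $\Omega\setminus\textit{\textbf K}$, where $\eta\equiv 0$ and hence $\omega\equiv 0$ by \eqref{omegaform}; or it is a common side $\Gamma_{\alpha\beta}=\overline{R_\alpha}\cap\overline{R_\beta}$ of two side-wise adjacent rectangles, traversed once as $\Gamma_\alpha^\beta$ and once as $\Gamma_\beta^\alpha$ with opposite orientation. Because $\omega$ is literally the same form along both approaches to $\Gamma_{\alpha\beta}$ — this is exactly why $\omega$ was written in the branch-free form \eqref{omegaform}, even though $f^\alpha$ and $f^\beta$ may differ by a sign there — the contributions $\int_{\Gamma_\alpha^\beta}\omega$ and $\int_{\Gamma_\beta^\alpha}\omega$ cancel in pairs. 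Hence the summed boundary integral collapses to $0$, which is \eqref{SumIntegrals}. The equivalent statement \eqref{sumOfsquares} then follows immediately upon writing $J_{f^\alpha}=|f^\alpha_z|^2-|f^\alpha_{\bar z}|^2$ and separating the two sums.

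The delicate point, and the reason the partition was engineered with such care, is precisely this branch-independence: the individual mappings $f^\alpha$ need not agree — even up to a global sign — across a shared edge, yet the $1$-form $\overline{f^\alpha}\,\mathrm d f^\alpha$ is invariant under $f^\alpha\mapsto -f^\alpha$ and coincides with the intrinsic form \eqref{omegaform} built from $\mathcal H$ and $\eta$ alone. I would therefore emphasize that every edge integral is taken against this one common $\omega$, making the cancellation on each $\Gamma_{\alpha\beta}$ an exact identity; the finitely many zeros $z_\nu$ are harmless, since a bounded $1$-form integrated along a curve sees a corner point as a set of length zero.
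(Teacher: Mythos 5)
Your proposal is correct and follows essentially the same route as the paper: integration by parts (Stokes) on each rectangle reduces everything to the boundary integrals of the single branch-independent $1$-form $\omega$ from \eqref{omegaform}, the edges in $\Omega\setminus\textit{\textbf K}$ contribute nothing since $\eta$ vanishes there, and the remaining shared edges cancel in pairs by opposite orientation. Your added emphasis on why $\omega$ is well defined across edges despite the sign ambiguity of $\sqrt{\mathcal H}$, and on the harmlessness of the corner zeros, is exactly the point the construction of the partition is designed to secure.
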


  \begin{proof}
 Upon integration by parts, each integral over $\,R_\alpha\,$ in (\ref{SumIntegrals})\,, $\,\alpha = 1, 2, ... , N\,$,  takes the form
 $$ \,\iint _{R_\alpha} J_{f^\alpha}(z) \,\mathbf d z  =  = \frac{1}{2 i} \int_{\partial R_\alpha} \,\omega \;\;\;\;(\partial R_\alpha \,\,\textnormal { is oriented counterclockwise}) \, $$
  where $\,\omega\,$  is independent of $\,\alpha\,$, see formula at (\ref{omegaform}).  We are reduced to showing that
  \begin{equation}
  \sum_{\alpha = 1}^N \int_{\partial R_\alpha} \,\omega \;= \,0
  \end{equation}

  The oriented boundary  of the rectangle  $\,R_\alpha\,$ consists of four oriented straight line segments. There  is nothing to integrate over a segment that lies entirely in $\,\Omega \setminus \mathbf K\,$, because  $\,\omega \equiv 0\,$ therein.
   Therefore, we need only consider the segments that intersect $\,\mathbf K\,$. These segments are exactly the common sides of two side-wise adjacent rectangles in the family  $\,\mathscr F = \{\, R_1, R_2, ...\,, R_N\,\}\,$, which  is immediate from our definition of $\,\mathscr F\,$. In other words, we are reduced to showing that

   \begin{equation}
  \sum_{\alpha \not = \beta}^N \;\int_{\Gamma_\alpha^\beta\,}  \,\omega \;= \,0
  \end{equation}
      Here $\,\Gamma_\alpha^\beta\,$ and $\,\Gamma_\beta^\alpha\,$ represent the same straight line segment $\, \overline{R_\alpha}\cap\,\overline{R_\beta}\,$,  but with opposite orientation. This results in  $\,\int_{\Gamma_\alpha^\beta\,}  \,\omega \; + \;\int_{\Gamma_\beta^\alpha\,}  \,\omega = 0 \,$, completing the proof of Lemma \ref{IntegralsOfJacobians}.

  \end{proof}

\section{Proof of Lemma \ref{InequalityForH} and Theorem \ref{SecOrdInequality}}\label{Section7}

\subsection{Proof of Lemma \ref{InequalityForH}}

Recall from Section \ref{RectangularPartition} the family  $\,\mathscr F \bydef \{  R_1, \, R_2, \, ... \,, R_N\}\,$  of rectangles. Let  $\,\mathcal A_\alpha\,$ be a continuous branch of $\,\sqrt{\mathcal H}\,$ in $\,R_\alpha\,$, $ \alpha = 1, 2, ... , N\,$; that is, $\,\mathcal A_\alpha ^2 = \mathcal H\,$ in $\, R_\alpha\,$.  Also  recall the mappings $\,f^\alpha : R_\alpha \rightarrow \mathbb C\,$  defined by the rule $\,f^\alpha = \mathcal A_\alpha \;\eta\,$.

First note the following identity
\begin{equation}\label{AnIdentity}
\sum_{\alpha = 1}^N\,\int _{R_\alpha} \mathcal A_\alpha \mathcal A_\alpha'\, \eta\, \eta_{\bar \xi}  = \frac{1}{4} \sum_{\alpha = 1}^N\,\int _{R_\alpha} [\mathcal H'\, \eta^2\, ]_{\bar \xi} =  \frac{1}{4} \,\int _\Omega [\mathcal H'\, \eta^2\, ]_{\bar \xi} = 0,
\end{equation}
because $\, \mathcal H'\, \eta^2 \in \mathscr C_0^\infty (\Omega)\,$ .
\newpage
Now the computation runs as follows:
\begin{equation}\label{Chain}
\begin{split}
 &\int_\Omega   |\mathcal H(\xi)|  \; \abs{\eta_{\bar \xi}}^2\\
& = \sum_{\alpha = 1}^N\,\int _{R_\alpha} | \mathcal A_\alpha |^2 | \eta_{\bar \xi}|^2  = \sum_{\alpha = 1}^N\,\int _{R_\alpha} | f^\alpha_{\bar \xi}|^2 \\
& \; =  \frac{1}{2}  \sum_{\alpha = 1}^N\,\int _{R_\alpha} \left (| f^\alpha_{\bar \xi}|^2 \,+\, | f^\alpha_{\xi}|^2\right)\; \;\;\;\big(\textnormal{by formula (\ref{sumOfsquares})}\;\big) \\ & (iii)\quad \quad  \geqslant \sum_{\alpha = 1}^N\,\int _{R_\alpha} | f^\alpha_{\bar \xi} f^\alpha_{\xi}|\\& (iv)\quad  \quad \geqslant \sum_{\alpha = 1}^N\,\left |\,\int _{R_\alpha}  f^\alpha_{\bar \xi} f^\alpha_{\xi}\,\right | \\ & \;(v)\quad\quad \geqslant \left |\, \sum_{\alpha = 1}^N\,\int _{R_\alpha}  f^\alpha_{\bar \xi} f^\alpha_{\xi}\,\right |  \\ & = \left | \,\sum_{\alpha = 1}^N\,\int _{R_\alpha}  \left(\mathcal A_\alpha \,\eta_{\bar \xi} \right)\; \left(\mathcal A_\alpha \eta_\xi \,+\, \mathcal A_\alpha ' \eta\right)\,\right | \\ & = \left | \,\sum_{\alpha = 1}^N\,\int _{R_\alpha}  \left(\mathcal A_\alpha \,\eta_{\bar \xi} \right)\; \left(\mathcal A_\alpha \eta_\xi \,+\, \mathcal A_\alpha ' \eta\right)\,\right | \\ & = \left | \, \sum_{\alpha = 1}^N\,\int _{R_\alpha}  \left(\mathcal A_\alpha \,\eta_{\bar \xi} \right)\; \left(\mathcal A_\alpha \eta_\xi \right)\,\right |\;\;\;\big(\textnormal{due to identity (\ref{AnIdentity})}\;\big) \\ &  =\,  \left |\, \sum_{\alpha = 1}^N\,\int _{R_\alpha}  \mathcal A_\alpha^2 \,\eta_{\bar \xi} \; \eta_\xi\,\right |    = \,  \left |\,\int_\Omega \mathcal H(\xi)  \;\eta_\xi\, \eta_{\bar \xi}\,\right |
\end{split}
\end{equation}
completing the proof of Lemma \ref{InequalityForH}.

\subsection{Proof of Theorem \ref{SecOrdInequality}}

Take a quick look at two simple estimates:
\begin{equation} \label{FirstInequality}
\begin{split}
\frac{1}{2}  \int_\Omega \left(   \abs{h_\xi}^2 + \abs{h_{\bar \xi}}^2       \right) \abs{\eta_{\bar \xi}}^2  \geqslant  \int_\Omega   |h_\xi\,\overline{h_{\bar \xi}}|    \; \abs{\eta_{\bar \xi}}^2 \;=\; \int_\Omega   |\mathcal H(\xi)|    \; \abs{\eta_{\bar \xi}}^2
\end{split}
\end{equation}
and
\begin{equation}\label{SecondInequality}
\begin{split}
\re \int_\Omega h_\xi \overline{h_{\bar\xi}} \;\eta_\xi\, \eta_{\bar \xi}  \;\textbf{d} \xi\; \geqslant   \;-\, \left |\int_\Omega h_\xi \overline{h_{\bar\xi}} \;\eta_\xi\, \eta_{\bar \xi}\right |\; =  -\, \left |\int_\Omega \mathcal H(\xi)  \;\eta_\xi\, \eta_{\bar \xi}  \;\textbf{d} \xi\;\right |
\end{split}
\end{equation}

If we appeal to (\ref{GeneralHolomorphicInequality}) in Lemma \ref{InequalityForH}, then (\ref{QuadraticInequality})  itself follows as a consequence.

\section{Backwards Analysis}\label{Section8}  When reading the above proof backwards,  we recover  precise circumstances under which we have  equality at  (\ref{QuadraticInequality}) of Theorem  \ref{SecOrdInequality}.

For the equality in (\ref{QuadraticInequality})\;it is necessary and sufficient that equality occurs in  (\ref{FirstInequality}), (\ref{SecondInequality}) and  in every link (iii), (iv), (v) of the chain  (\ref{Chain}).
We begin with  (\ref{FirstInequality}),  where the equality occurs if and only if
\begin{equation}\label {(i)}
\abs{h_\xi}^2 \,|\eta_{\bar \xi }\,|^2\,=\, \abs{h_{\bar \xi}}^2 \,|\eta_{\bar \xi }\,|^2\,\;,\;\; \textnormal{almost everywhere in}\,\,\Omega\,.
\end{equation}
Equivalently\;,
\begin{equation} \label{ZeroJacobian}
J_h(\xi)\, \eta_{\bar{\xi}}(\xi) =   0\;\;\;,\;\;\;\textnormal{where}\;\; J_h(\xi)\;=\; \abs{h_\xi(\xi)}^2 \,-\, \abs{h_{\bar \xi}(\xi)}^2 \,
\end{equation}
Thus $\,J_h(\xi) = 0\,$ almost everywhere  in $\,\Omega_\circ \bydef \{ \xi \;;\;  \eta_{\bar{\xi}}(\xi) \not= 0\,\}\,$. By chance, this observation gives the desired Inequality (\ref{Stronginequality}). Precisely, we have

\begin{theorem} \label{StrictInequality} Let $\,\Omega \subset \mathbb C\,$ be any bounded domain and $\,h \in\mathscr W^{1,2}(\Omega)\,$ a Hopf harmonic map whose Jacobian determinant  $\,J_h(\xi) \not = 0 \,$ almost everywhere in $\,\Omega\,$. Then for every test functions  $\,\eta \not\equiv 0\,$, we have strict inequality
$$
\mathcal E[h]  < \mathcal E[H_\varepsilon]\,,
$$
provided $\, \varepsilon\;$ is sufficiently small and different from  $0$ .
\end{theorem}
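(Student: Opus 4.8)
The plan is to read the strict inequality off the power expansion of Theorem~\ref{Expansion}, using the equality analysis just performed in this section to promote the Infinitesimal Dirichlet Principle to a \emph{strict} one. For the family $\,H_\varepsilon(z)=h(\xi)\,$ with $\,z=\xi+\varepsilon\eta(\xi)\,$, formula \eqref{ExpansionInEpsilon} gives
$$
\mathscr E[H_\varepsilon]=\mathscr E[h]+4\varepsilon\,\re\!\int_\Omega h_\xi\overline{h_{\bar\xi}}\,\eta_{\bar\xi}\,\textbf{d}\xi+4\varepsilon^2 Q(\eta)+o(\varepsilon^2),
$$
where $\,Q(\eta)\,$ is the left-hand side of \eqref{QuadraticInequality}. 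Because $\,h\,$ is Hopf harmonic, $\,\mathcal H=h_\xi\overline{h_{\bar\xi}}\,$ is holomorphic, so integrating by parts (no boundary term, $\,\eta\,$ having compact support) yields $\,\int_\Omega\mathcal H\,\eta_{\bar\xi}\,\textbf{d}\xi=-\int_\Omega\mathcal H_{\bar\xi}\,\eta\,\textbf{d}\xi=0\,$; the linear term therefore disappears, and by Theorem~\ref{SecOrdInequality} the surviving coefficient obeys $\,Q(\eta)\ge 0\,$.

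The decisive step is to show that $\,Q(\eta)>0\,$ for every $\,\eta\not\equiv 0\,$. Suppose, to the contrary, that $\,Q(\eta)=0\,$. Tracing through the chain \eqref{Chain} that proves \eqref{QuadraticInequality}, such an equality forces, in particular, equality in \eqref{FirstInequality}, which by \eqref{ZeroJacobian} is equivalent to $\,J_h(\xi)\,\eta_{\bar\xi}(\xi)=0\,$ almost everywhere; that is, $\,J_h=0\,$ a.e.\ on $\,\Omega_\circ=\{\xi:\eta_{\bar\xi}(\xi)\ne0\}\,$. Under the standing hypothesis $\,J_h\ne0\,$ a.e.\ this is only possible if $\,|\Omega_\circ|=0\,$, i.e.\ $\,\eta_{\bar\xi}=0\,$ a.e. A smooth, compactly supported, holomorphic function vanishes identically, so $\,\eta\equiv0\,$ --- contradicting $\,\eta\not\equiv0\,$. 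Hence $\,Q(\eta)>0\,$.

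It remains to convert strict positivity of the $\,\varepsilon^2\,$-coefficient into the strict energy inequality for small $\,\varepsilon\ne0\,$. For a fixed $\,\eta\,$ the Jacobian $\,J_z(\xi)=|1+\varepsilon\eta_\xi|^2-|\varepsilon\eta_{\bar\xi}|^2\,$ is a quadratic polynomial in $\,\varepsilon\,$, bounded below by a positive constant on $\,\supp\eta\,$ uniformly for $\,\varepsilon\,$ in any compact subinterval of the admissible range. Thus in the exact identity \eqref{powerExpansion} the integrands depend real-analytically on $\,\varepsilon\,$ through $\,1/J_z\,$, and their $\,\varepsilon\,$-derivatives are dominated by fixed $\,L^1(\Omega)\,$ majorants built from $\,|h_\xi|^2+|h_{\bar\xi}|^2\in L^1(\Omega)\,$ (finite since $\,\mathscr E[h]<\infty\,$) and the bounded quantities $\,\eta_\xi,\eta_{\bar\xi}\,$. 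Differentiation under the integral sign is therefore licit, $\,\varepsilon\mapsto\mathscr E[H_\varepsilon]\,$ is smooth, and the remainder above is genuinely $\,O(\varepsilon^3)\,$. Writing $\,\mathscr E[H_\varepsilon]-\mathscr E[h]=\varepsilon^2\big(4Q(\eta)+O(\varepsilon)\big)\,$ and invoking $\,Q(\eta)>0\,$, we obtain $\,\mathscr E[H_\varepsilon]>\mathscr E[h]\,$ for all sufficiently small $\,\varepsilon\ne0\,$, which is precisely \eqref{Stronginequality}.

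The genuine obstacle is the last paragraph rather than the second: the equality analysis of this section hands us strict positivity of the infinitesimal term for free, so the only real work is to certify that this strictly positive quadratic term dominates all higher-order contributions. Concretely, one must justify the smoothness of $\,\varepsilon\mapsto\mathscr E[H_\varepsilon]\,$ and the uniform domination needed for differentiating under the integral --- and it is exactly here that the full $\,\mathscr W^{1,2}(\Omega)\,$ hypothesis (finite global energy), rather than merely $\,\mathscr W^{1,2}_{\mathrm{loc}}\,$, enters.
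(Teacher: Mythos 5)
Your proposal is correct and follows essentially the same route as the paper: the paper derives Theorem \ref{StrictInequality} from the backwards analysis of equality in \eqref{FirstInequality}, which under $J_h\neq 0$ a.e.\ forces $\eta_{\bar\xi}\equiv 0$ and hence $\eta\equiv 0$, so the second-order term is strictly positive while the first-order term vanishes by Hopf harmonicity. The only difference is that you spell out the uniform domination needed to certify the remainder is $O(\varepsilon^3)$, a step the paper leaves implicit (and note that, since all integrands carry a factor of $\eta_{\bar\xi}$, integrability near $\supp\eta$ already suffices there).
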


Now, resuming the backward analysis, we see that equality in (\ref{SecondInequality}) occurs if and only if the following integral is a real nonpositive number,
\begin{equation}\label{(ii)}
\int_\Omega h_\xi \overline{h_{\bar\xi}} \;\eta_\xi\, \eta_{\bar \xi}\;\leqslant 0 \;\;\;\;\textnormal{(a nonpositive real number)}
\end{equation}
Next we take a look at the chain of inequalities in (\ref{Chain}).
For equality in(\ref{Chain}) (iii) it is necessary and sufficient that $| f^\alpha_\xi | \equiv | f^\alpha_{\bar \xi}| $ almost everywhere in  $R_\alpha$ {for all} $\alpha = 1,2, ... ,\,N\,$. This means that  for all \,$\, \alpha = 1,2, ... ,\,N\,$, we should have:
\begin{equation}\label{(iii)}
 f^\alpha_{\bar \xi} \,\equiv  c_\alpha(\xi)\,\overline{ f^\alpha_{\xi}}\,
\end{equation}
where the complex coefficients have constant modulus, $\, | c_\alpha(\xi) | \equiv 1\,$.\\
With these equations in hand, we see that (\ref{Chain}) (iv) becomes an equality if an only if
$$
\int_{R_\alpha}  | f^\alpha_\xi |^2 = \Big |\int_{R_\alpha}  c_\alpha(\xi) \, | f^\alpha_\xi |^2  \Big | \;,\; \textnormal{for every}\; \alpha = 1, 2, ... ,\, N .
$$
 This, in view of (\ref{(iii)}),  is possible if and only if  for all \,$\, \alpha = 1,2, ... ,\,N\,$,
 \begin{equation}\label{(iv)}
  f^\alpha_{\bar \xi} \,\equiv  c_\alpha\,\overline{ f^\alpha_{\xi}}\, ,\,\;\;\textnormal{where the complex coefficients} \, c_\alpha\; \textnormal{are constants.}
 \end{equation}

On the other hand, to have, equality in (\ref{Chain}) (v) it is required that
$$
\sum_{\alpha = 1}^ N  \int_{R_\alpha} |f^\alpha_\xi |^2   \, =\,\Big|  \sum_{\alpha = 1}^ N c_\alpha  \int_{R_\alpha} |f^\alpha_\xi |^2 \, \Big |
$$
This means that $\,c_\alpha \, $ should be the same constants whenever   $\,\int_{R_\alpha} |f^\alpha_\xi |^2 \,\not = 0\,$, $\, \alpha = 1,2, ... ,\,N\,$. \\
 All the above conditions boil down to one equation. Namely, there is a complex constant $\,c\,$ of modulus $\,1\,$ such that $\,f^\alpha_{\bar \xi} \,\equiv  c\,\overline{ f^\alpha_{\xi}}\,,\;\, \textnormal{on every rectangle} \,R_\alpha\,.$ In this way we arrive at the Cauchy-Riemann equations
 $$\,\frac {\partial \xi}{\partial \overline{\xi}} \left[f^\alpha \,- \; c\,\overline{ f^\alpha}\,\right] \,,\;\, \textnormal {  on every rectangle } \,R_\alpha\,.$$

 It is not generally true that the holomorphic functions $\,f^\alpha \,- \; c\,\overline{ f^\alpha}\,$ on $\,R_\alpha\,$ and $\,f^\beta \,- \; c\,\overline{ f^\beta}\,$ on the adjacent rectangle $\,R_\beta\,$  agree  along the common boundary $\,\Gamma _{\alpha \beta} =  \overline{R_\alpha} \cap \overline{R_\beta}\,$. But their squares do agree, so the following function $\,\Psi = \Psi(\xi)\,$\, is holomorphic on the entire domain.

  $$
 \Psi(\xi) =  \mathcal H\,\eta^2 - 2 c\, |\mathcal H |\,|\eta|^2  + c^2\,\overline{\mathcal H} \,\overline{\eta} ^2 \;=\; \left [
 f^\alpha(\xi) -  c\,\overline{ f^\alpha(\xi)}\,\right]^2 \;,\; \textrm{for} \,\xi \in R_\alpha .
 $$
Such a function $\,\Psi\,$, being equal to zero near $\,\partial \Omega\,$, must vanish in the entire domain. This yields

\begin{equation}
 f^\alpha(\xi) \,- \; c\,\overline{ f^\alpha(\xi)} \equiv 0 \;,\;\;\textnormal{on every rectangle}\; R_\alpha \, .
\end{equation}
Since $\,f^\alpha = \mathcal A_\alpha \eta\,$, this reads as $\,\mathcal A_\alpha \eta \,=\, c \overline{\mathcal A_\alpha }\,\overline{\eta }\,$. Multiplying by $\,\mathcal A_\alpha\,$ we arrive at the condition free of the index $\,\alpha \in \{1, 2, ... , N\}\,$; namely, $\,\mathcal H\, \eta \,= c \,|\mathcal H | \,\overline{\eta}\,$. Let us name such $\,\eta \in \mathscr C^\infty_0(\Omega)\,$ a critical direction in the change of the variables.
\begin{theorem} \label{CriticalDirection} Let $\, h \in \mathscr W^{1,2}_{\textnormal{loc}}(\Omega)\,$ be Hopf harmonic and $\,\mathcal H(z) = h_z \overline{h_{\bar z}}\,$. Then we have equality in (1,15)  and in (1,16) if an only if  there is a complex constant $\, c \,$ of modulus 1 such that

\begin{equation}\label{CriticalCircumstances}
\mathcal H\, \eta \,= c \,|\mathcal H | \,\overline{\eta}  \,, \; \, \textnormal {everywhere in } \, \Omega
\end{equation}
\end{theorem}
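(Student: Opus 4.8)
The plan is to read the proof of Lemma \ref{InequalityForH} backwards, exactly as the section title suggests. The chain \eqref{Chain} is a string of inequalities terminating in \eqref{GeneralHolomorphicInequality}, so equality there forces equality at each link (iii), (iv), (v); likewise equality in \eqref{QuadraticInequality} forces equality in the two elementary bounds \eqref{FirstInequality} and \eqref{SecondInequality}. First I would characterize each sharp inequality separately, working throughout with the rectangular partition $\,\mathscr F=\{R_1,\dots,R_N\}\,$ and the branch mappings $\,f^\alpha=\mathcal A_\alpha\,\eta\,$ of Section \ref{RectangularPartition}, where $\,\mathcal A_\alpha^2=\mathcal H\,$ on $\,R_\alpha\,$.

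The arithmetic--geometric step (iii), namely $\,\tfrac12(|f^\alpha_\xi|^2+|f^\alpha_{\bar\xi}|^2)\ge|f^\alpha_\xi f^\alpha_{\bar\xi}|\,$, is an equality precisely when $\,|f^\alpha_\xi|\equiv|f^\alpha_{\bar\xi}|\,$, which I would record as $\,f^\alpha_{\bar\xi}=c_\alpha(\xi)\,\overline{f^\alpha_\xi}\,$ with $\,|c_\alpha(\xi)|\equiv1\,$. The triangle inequality (iv) for $\,\int_{R_\alpha}\,$ then forces the integrand $\,f^\alpha_{\bar\xi}f^\alpha_\xi=c_\alpha(\xi)\,|f^\alpha_\xi|^2\,$ to have constant argument, so $\,c_\alpha(\xi)\equiv c_\alpha\,$ is constant; and the triangle inequality (v) for the finite sum forces every $\,c_\alpha\,$ with $\,\int_{R_\alpha}|f^\alpha_\xi|^2\neq0\,$ to equal one common unimodular constant $\,c\,$. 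Hence on each rectangle $\,f^\alpha_{\bar\xi}=c\,\overline{f^\alpha_\xi}\,$, which is the Cauchy--Riemann condition that $\,f^\alpha-c\,\overline{f^\alpha}\,$ be holomorphic in $\,R_\alpha\,$.

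The hard part is gluing these local conclusions into a global one, since $\,f^\alpha=\mathcal A_\alpha\eta\,$ depends on the chosen branch of $\,\sqrt{\mathcal H}\,$ and is defined only up to sign; thus $\,f^\alpha-c\,\overline{f^\alpha}\,$ need not match $\,f^\beta-c\,\overline{f^\beta}\,$ across a common side, and the holomorphic pieces do not patch. I would resolve this by passing to the square
\[
\Psi \;=\; \bigl(f^\alpha-c\,\overline{f^\alpha}\bigr)^2 \;=\; \mathcal H\,\eta^2 \,-\, 2c\,|\mathcal H|\,|\eta|^2 \,+\, c^2\,\overline{\mathcal H}\,\overline{\eta}^2,
\]
whose right--hand expression is manifestly independent of the branch and therefore defines a single continuous function on $\,\Omega\,$. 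Being holomorphic on every open rectangle, vanishing near $\,\partial\Omega\,$ together with $\,\eta\,$, and extending continuously across the finitely many grid arcs and corner--zeros of $\,\mathcal H\,$, the function $\,\Psi\,$ is holomorphic on all of $\,\Omega\,$ by removability of that exceptional set. The identity theorem then gives $\,\Psi\equiv0\,$, whence $\,f^\alpha=c\,\overline{f^\alpha}\,$; writing this as $\,\mathcal A_\alpha\eta=c\,\overline{\mathcal A_\alpha}\,\overline\eta\,$ and multiplying by $\,\mathcal A_\alpha\,$ (so that $\,\mathcal A_\alpha^2=\mathcal H\,$ and $\,|\mathcal A_\alpha|^2=|\mathcal H|\,$) produces the branch--free identity $\,\mathcal H\,\eta=c\,|\mathcal H|\,\overline\eta\,$ of \eqref{CriticalCircumstances}, valid on all of $\,\Omega\,$ independently of $\,\alpha\,$.

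For the converse I would run the computation forward: starting from \eqref{CriticalCircumstances}, dividing by $\,\mathcal A_\alpha\,$ where $\,\mathcal H\neq0\,$ recovers $\,f^\alpha=c\,\overline{f^\alpha}\,$ and hence, by differentiation, $\,f^\alpha_{\bar\xi}=c\,\overline{f^\alpha_\xi}\,$, restoring equality in (iii)--(v) and therefore in \eqref{GeneralHolomorphicInequality}. The auxiliary estimates enter only to sharpen \eqref{QuadraticInequality}: equality in \eqref{FirstInequality} isolates the independent Jacobian condition $\,J_h\,\eta_{\bar\xi}\equiv0\,$ from \eqref{ZeroJacobian} (in line with Theorem \ref{StrictInequality}), while equality in \eqref{SecondInequality} demands that $\,\int_\Omega\mathcal H\,\eta_\xi\eta_{\bar\xi}\,$ be real and nonpositive; since identity \eqref{AnIdentity} gives $\,\int_\Omega\mathcal H\,\eta_\xi\eta_{\bar\xi}=c\sum_\alpha\int_{R_\alpha}|f^\alpha_\xi|^2\,$, this last condition merely pins down the admissible phase $\,c\,$ and is consistent with the constant produced above. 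I expect the globalization through $\,\Psi\,$ to be the sole genuine difficulty; the remaining steps are routine equality analysis for the arithmetic--geometric and triangle inequalities.
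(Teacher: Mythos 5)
Your proposal is correct and follows essentially the same route as the paper's Section \ref{Section8}: equality analysis of \eqref{FirstInequality}, \eqref{SecondInequality} and of links (iii)--(v) in \eqref{Chain}, yielding the local Cauchy--Riemann relations $f^\alpha_{\bar\xi}=c\,\overline{f^\alpha_\xi}$ with one common unimodular constant, followed by globalization through the branch-independent square $\Psi=(f^\alpha-c\,\overline{f^\alpha})^2$ and the identity theorem. Your explicit treatment of the converse and of the removability of the grid arcs is a welcome fleshing-out of steps the paper leaves terse, but it is the same argument.
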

We leave it to the reader to describe  when such condition  actually occurs.

\section{A Brief Recollection of Quadratic Differentials}
 The reader is referred to \cite{Jenkins3},  \cite{Jenkins4}, \cite{Kuzmina}  for definitions and additional information. There is an interesting abstraction, invented by M. Thurston~\cite{Th} under the name \textit{measured foliations}, of the trajectory structures  and metrics induced by quadratics differentials, see \cite{HubbardMasur}. To a certain extent  the 2-valued mappings in Section \ref{2valeudMappings} are reminiscent of these ideas.
However, our discussion is confined upon results found in the seminal book by K. Strebel  \cite{Strebel}.  Let us extract the following useful facts from this book. \\
\subsection{Simply connected Domains}\label{SimplyConnectedTheorems}
Let us begin with:
\begin{itemize}
\item Theorem 14.2.1 in \cite{Strebel} (page 72) \\
\textit{Let $\,\varphi(z) \, \textnormal dz \otimes \textnormal d z\; \not\equiv 0\,$ be a holomorphic quadratic differential in a simply connected domain $\,\Omega\,$.  Then any two points of $\,\Omega\,$ can be joined by at most one geodesic arc.} \\
In particular, the union of two geodesic arcs cannot contain a closed Jordan curve.
\item Theorem 15.1 (page 74) \\ \textit{Every maximal geodesic arc (in particular every noncritical trajectory) of a holomorphic quadratic differential in a simply connected region is a cross cut.}\\ This means that a noncritical trajectory has two different end-points, both are at the boundary of $\, \Omega\,$. \\
    \item Theorem 16.1 in \cite{Strebel} (page 75)  \\\textit{Let $\,\mathcal H \not \equiv 0\,$ be a holomorphic quadratic differential in a simply connected domain $\,\Omega\,$ and $\,\gamma\,$ its geodesic arc (in particular noncritical trajectory arc) connecting $\,z_\circ\,$ and $\,z_1\,$. Then the $\,\mathcal H\,$-length $\,|\widetilde{\gamma} |_\mathcal H\,$ of any curve $\,\widetilde{\gamma}\not = \gamma\,$ which connects
        $\,z_\circ\,$ and $\,z_1\,$ within $\,\Omega\,$ is larger than } $\,|\gamma |_\mathcal H\,$. \\
\end{itemize}
We recall what this means,
\begin{equation}\label{LengthEstimate}
\begin{split}
&|\widetilde{\gamma} |_\mathcal H \bydef   \int_{\widetilde{\gamma}} \,\sqrt{|\mathcal H(\xi)|}\, \,|\textnormal{d} \xi | \;\;\geqslant  \left |\int_{\widetilde{\gamma}} \,\sqrt{\mathcal H(\xi)}\, \,\textnormal{d} \xi \right| \; \\&  =  \left|\int_\gamma \,\sqrt{\mathcal H(z)}\, \,\textnormal{d} z \right|   =  \int_\gamma \,\sqrt{|\mathcal H(z)}|\, \,|\textnormal{d} z |   \bydef  |\gamma |_\mathcal H
\end{split}
\end{equation}

    As a consequence of the above facts, we see that:
    \begin{theorem}[Partition into Strip Domains] \label{StripPartition}
    Let $\,\varphi(z)\, \textnormal{d}z \otimes \textnormal{d} z\,\not \equiv 0\, $ be a holomorphic quadratic differential defined in a simply connected domain $\,\Omega\,$. Denote by $\,\mathcal C\,\subset \Omega\,$  the union of vertical trajectories passing through the zeros of  $\,\varphi\,$, the so-called critical graph  of $\,\varphi(z)\, \textnormal{d}z \otimes \textnormal{d} z\,$.  Then $\,\Omega\setminus \mathcal C \,$ has full measure in $\,\Omega\,$ which can be decomposed into vertical strips.
    \begin{equation}
    \Omega \setminus \mathcal C  = \bigcup_{\alpha \in \mathbb N}  \Omega_\alpha
    \end{equation}
    \end{theorem}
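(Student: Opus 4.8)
The plan is to argue in three stages: the critical graph $\,\mathcal C\,$ is negligible, its complement splits into countably many components, and each component is a vertical strip carried onto a Euclidean strip by the natural parameter of $\,\varphi\,$.

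First I would dispose of the measure-theoretic claim. Since $\,\varphi\,$ is holomorphic and not identically zero, its zeros are isolated and locally finite in $\,\Omega\,$, hence at most countable; through each zero there emanate only finitely many vertical prongs. Following such a prong produces a maximal critical vertical trajectory, and by Theorem 14.2.1 the union of geodesic arcs in the simply connected domain $\,\Omega\,$ cannot contain a closed Jordan curve, so no concatenation of critical trajectories can close up. Consequently $\,\mathcal C\,$ is a countable union of real-analytic Jordan arcs, each a $\,C^1\,$-image of an interval and therefore of two-dimensional Lebesgue measure zero; thus $\,|\mathcal C| = 0\,$ and $\,\Omega\setminus\mathcal C\,$ has full measure. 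The same structure (no recurrence, since by Theorem 15.1 every noncritical trajectory is a crosscut and so leaves every compact subset) shows that $\,\mathcal C\,$ is closed in $\,\Omega\,$.

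Next I would record the decomposition. Because $\,\mathcal C\,$ is closed, $\,\Omega\setminus\mathcal C\,$ is open, and an open planar set has at most countably many connected components; write $\,\Omega\setminus\mathcal C = \bigcup_{\alpha\in\N}\Omega_\alpha\,$ for this family. Each $\,\Omega_\alpha\,$ is moreover simply connected: a loop $\,\ell\subset\Omega_\alpha\,$ bounds a disk $\,D\subset\Omega\,$ by simple connectivity of $\,\Omega\,$, and $\,D\,$ can contain no point of $\,\mathcal C\,$, for any critical arc meeting $\,D\,$ would have to exit across $\,\partial D = \ell \subset \Omega\setminus\mathcal C\,$ on its way to a zero or to $\,\partial\Omega\,$ (forbidden, as $\,\ell\cap\mathcal C=\emptyset\,$) or else close up inside $\,D\,$ (forbidden by Theorem 14.2.1); hence $\,D\subset\Omega_\alpha\,$ and $\,\ell\,$ is contractible.

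Finally I would identify each $\,\Omega_\alpha\,$ as a vertical strip. On $\,\Omega_\alpha\,$ one has $\,\varphi\neq 0\,$, so the simply connected set admits a single-valued branch of $\,\sqrt{\varphi}\,$ and the natural parameter $\,w = \Phi(z)\bydef \int_{z_0}^{z}\sqrt{\varphi(\zeta)}\,\dtext\zeta\,$ is well defined with $\,\Phi' = \sqrt{\varphi}\neq 0\,$; it normalizes $\,\varphi(z)\,\dtext z\otimes\dtext z\,$ to $\,\dtext w\otimes\dtext w\,$ and sends vertical trajectories to the vertical lines $\,\re w = \const\,$. Through each point of $\,\Omega_\alpha\,$ passes exactly one noncritical vertical trajectory; by Theorem 15.1 it is a crosscut of $\,\Omega\,$, and by Theorem 14.2.1 it is the unique geodesic joining its two endpoints, so $\,\Phi\,$ is injective along it. Using position along a leaf together with the transverse label of the leaf as global coordinates, and invoking geodesic uniqueness to forbid the leaves from reconnecting, $\,\Phi\,$ is globally injective and maps $\,\Omega_\alpha\,$ conformally onto a strip $\,\{\,a < \re w < b\,\}\,$ foliated by the vertical trajectories $\,\re w = \const\,$; this is precisely what it means for $\,\Omega_\alpha\,$ to be a vertical strip. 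The main obstacle is exactly this last step: local injectivity of $\,\Phi\,$ is free from $\,\Phi'\neq 0\,$, but proving \emph{global} injectivity and pinning the image down as an honest strip, rather than a ring, spiral, or other trajectory domain, is where the simply connected hypothesis is essential, entering through the absence of closed geodesics (Theorem 14.2.1) and the crosscut property (Theorem 15.1), with the length comparison of Theorem 16.1 guaranteeing that the transverse measure is the genuine Euclidean width $\,b-a\,$.
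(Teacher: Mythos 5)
Your proposal is essentially correct, and it is worth noting that the paper gives no proof of Theorem \ref{StripPartition} at all: the statement is announced as ``a consequence of the above facts,'' namely Strebel's Theorems 14.2.1 and 15.1, which are exactly the two inputs you use. Your three stages (the critical graph is a countable union of analytic arcs and hence null; its complement decomposes into countably many open, simply connected components; each component is foliated by noncritical vertical trajectories, each a crosscut of $\,\Omega\,$ by Theorem 15.1) supply the details the paper delegates to \cite{Strebel}.

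Two small points deserve attention. First, your justification that $\,\mathcal C\,$ is closed in $\,\Omega\,$ rules out recurrence of a single trajectory, but when $\,\varphi\,$ has infinitely many zeros (necessarily accumulating only at $\,\partial \Omega\,$) one must also exclude the accumulation of infinitely many \emph{distinct} critical arcs onto an interior point of $\,\Omega\setminus\mathcal C\,$; that is a statement about local finiteness of the critical graph, not about non-recurrence, and it is the one place where your sketch leans on more than the two cited theorems. Second, your final identification overshoots the paper's definition: a vertical strip here is a simply connected domain swept out by vertical crosscuts, and the natural parameter $\,\Phi\,$ carries $\,\Omega_\alpha\,$ onto a domain swept out by the vertical segments $\,\{\,\re w = t\,\}\cap \Phi(\Omega_\alpha)\,$, $\,a<t<b\,$ --- not, in general, onto the full Euclidean strip $\,\{\,a<\re w<b\,\}\,$. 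The case $\,\varphi\equiv 1\,$ on the unit disc already shows this: the critical graph is empty, the unique component is the disc itself, swept by its vertical chords, and $\,\Phi\,$ is the identity. Neither point invalidates your argument, since the theorem only asserts the decomposition into vertical strips in the paper's sense, which your second and third stages do establish.
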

\begin{definition} Here and in the sequel the term \textit{vertical strip} refers to a simply connected domain swept out by vertical crosscuts of   $\,\varphi(z)\, \textnormal{d}z \otimes \textnormal{d} z\,\not \equiv 0\, $. We emphasize that in our terminology the vertical crosscuts are the noncritical vertical trajectories with two different  endpoints in $\,\partial \Omega\,$.
\end{definition}
\subsection{Multiply Connected Domains}
One of the inherent difficulties to deal with the multiply connected domains is the presence  of recurrent trajectories of a Hopf differential. Actually, it holds that:
\begin{itemize}
\item
 \textit{No trajectory ray of a Hopf differential $\,\mathcal H(z)\,\textnormal d z\otimes \textnormal d z\,$  in a domain of connectivity $\,\leqslant 3\,$ is recurrent.} \\
For a proof  see  J.A. Jenkins \cite{Jenkins1} and \cite {Jenkins2},  and  W. Kaplan \cite{Kaplan} . \\
\item Theorem 17.4 in \cite{Strebel} (page 82)\\
  \textit{Suppose $\,\mathcal H(z)\, \textnormal{d}z \otimes \textnormal{d} z\,\not \equiv 0\, $ is a holomorphic quadratic differential defined in a domain $\,\Omega\,$ and $\,\gamma \subset \Omega\,$ is a closed geodesic of $\,\mathcal H(z)\, \textnormal{d}z \otimes \textnormal{d} z\,$ . Then, any closed curve $\,\widetilde{\gamma} \subset \Omega\,$ in the homotopy class of\, $\,\gamma\,$ has $\,\mathcal H\,$-length $\,|\widetilde{\gamma }|_{\mathcal H} \geqslant \,|\gamma |_{\mathcal H}\,$.}
\end{itemize}

\begin{figure}[!h]
\begin{center}
\includegraphics*[height=2.0in]{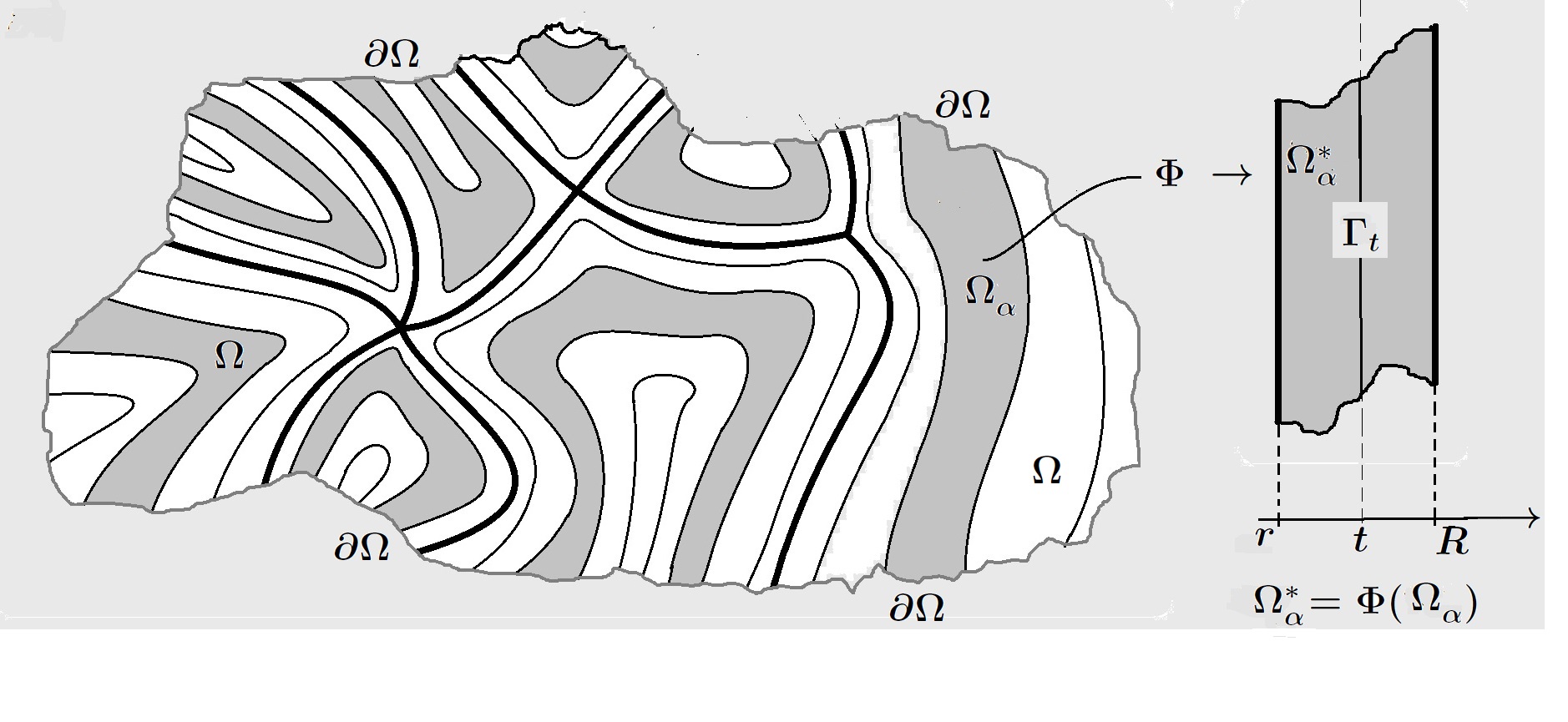}
\caption{A strip type domain $\,\Omega_\alpha\,$ is swept out by vertical trajectory arcs with endpoints at $\,\partial \Omega\,$. The conformal transformation  $\,\Phi = \Phi(z) \bydef \int \sqrt{\phi(z)} \textnormal d z\,$ (so-called distinguished parameter) takes those arcs into vertical straight line segments $\,\Gamma_t\,\,, r< t < R\,$, which form a Euclidean strip $\,\Omega_\alpha ^*\,$. \,}\label{Figure2}
\end{center}
\end{figure}

From now on,  we make a standing assumption  that $\,\mathcal H(z) \,\textnormal d z \otimes \textnormal d z \,$ admits only two types of configuration domains (possibly a countable number of them) ; namely,
\begin{itemize}
\item  The \textit{strip domains} and
\item The \textit{circular domains}; each of which is swept out by closed vertical trajectories.
\end{itemize}
Precisely, we have a disjoint union of full area in $\,\Omega\,$
 \begin{equation}\label{ConfigurationDomains}
 \Omega ^{'} \; \bydef\; \bigcup_{\alpha \in \mathbb N}  \Omega_\alpha\; \subset \Omega,\;\; |\Omega \setminus \Omega^{'} |  =  0
 \end{equation}
where $\,\Omega_\alpha\,$ is either a circular domain or a strip domain. Such configurations typically occur upon restriction to $\Omega\,$ of a  Strebel quadratic differential on the Riemann sphere $\,\widehat{\mathbb C}\,$ (that is, having only closed trajectories). In this case the vertical crosscuts are  non other than the fragments of closed trajectories that lay within $\Omega\,$,  see Figures  \ref{FigureLT},  \ref{FourPoles}.  We refer to such  $\,\mathcal H(z) \,\textnormal d z \otimes \textnormal d z \,$ as \textit{Strebel type differential on $\,\Omega\,$}.

\section {The Length-Area Inequalities}\label{LengthArea}
We note that for $\,h \in \mathscr W^{1,2}(\Omega)\,$ the differential  $\,\mathcal H(z) \,\textnormal d z \otimes \textnormal d z \,$ has finite area; meaning that $\,\int_\Omega  |\mathcal H(z) |  \,\textbf{d} z  < \infty\,$.
\begin{proposition} [Length-Area Inequalities]\label{LengthAreaIneq}
Let $\,\mathcal H(z)\, \textnormal{d}z \otimes \textnormal{d} z\,\not \equiv 0\, $  be a Strebel differential in $\,\Omega\,$ of finite area,  and let $\,F\,$ and $\,G\,$ be measurable functions in $\,\Omega\,$   such that
$$
\int_\Omega |F(z)|\, |\mathcal H(z) |\,\textnormal{\textbf{d}} z\; < \infty\;\;\; \textnormal{and}\;\;\;\int_\Omega |G(z)|\, |\mathcal H(z) |\,\textnormal{\textbf{d}} z\; < \infty
$$
Suppose that for every vertical trajectory $\,\gamma \subset \Omega \,$ (either circular or crosscut, see formula \ref{ConfigurationDomains}) the following inequality holds:
\begin{equation}\label{LineInequalities}
\int_\gamma |F(z)|\, \sqrt{|\mathcal H(z) |}\,|\textnormal{d} z|\; \leqslant \;\int_\gamma |G(z)|\, \sqrt{|\mathcal H(z) |}\,|\textnormal{d} z |\;
\end{equation}
Then
\begin{equation}\label{AreaInequalities}
\int_\Omega |F(z)|\, |\mathcal H(z) |\,\textnormal{\textbf{d}} z\; \leqslant  \;\;\int_\Omega |G(z)|\, |\mathcal H(z) |\,\textnormal{\textbf{d}} z\;
\end{equation}
\end{proposition}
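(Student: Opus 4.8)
The plan is to deduce the two-dimensional inequality \eqref{AreaInequalities} from the one-dimensional hypothesis \eqref{LineInequalities} by straightening each configuration domain with the distinguished parameter and then integrating across the foliation by vertical trajectories. Since $\mathcal H$ has finite area and $|\Omega \setminus \Omega'| = 0$, the critical set contributes nothing to either side of \eqref{AreaInequalities}; by countable additivity it therefore suffices to establish
\[
\int_{\Omega_\alpha} |F(z)|\, |\mathcal H(z)|\,\textbf{d} z \;\leqslant\; \int_{\Omega_\alpha} |G(z)|\, |\mathcal H(z)|\,\textbf{d} z
\]
for each single configuration domain $\Omega_\alpha$ from \eqref{ConfigurationDomains}, and then sum over $\alpha \in \mathbb N$, the total being finite by the integrability hypotheses on $F$ and $G$.

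On a fixed $\Omega_\alpha$ I would introduce the distinguished (natural) parameter $w = \Phi(z) = \int \sqrt{\mathcal H(z)}\,\textnormal d z$, with $w = u + iv$. Because $\Omega_\alpha$ contains no zeros of $\mathcal H$ and is, by the standing assumption, either a strip domain swept out by vertical crosscuts or a circular domain swept out by closed vertical trajectories, $\Phi$ realises a conformal identification of $\Omega_\alpha$ with a Euclidean model $\Omega_\alpha^\ast$ --- an open rectangle in the strip case and a flat cylinder $\{\,a<u<b\,,\; v \in \mathbb R / c\mathbb Z\,\}$ in the circular case --- under which the vertical trajectories become the vertical slices $\{u = \text{const}\}$. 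Two transformation rules then carry all the analytic content: the area rule $|\mathcal H(z)|\,\textbf{d} z = |\Phi'(z)|^2\,\textbf{d} z = \textnormal d u\,\textnormal d v$, and the length rule $\sqrt{|\mathcal H(z)|}\,|\textnormal d z| = |\Phi'(z)|\,|\textnormal d z| = \textnormal d v$ along a vertical trajectory (where $u$ is constant).

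Writing $\widetilde F = F\circ\Phi^{-1}$ and $\widetilde G = G\circ\Phi^{-1}$ on $\Omega_\alpha^\ast$, the length rule turns the hypothesis \eqref{LineInequalities}, applied to the vertical trajectory $\gamma_u$ with $u$ fixed, into the one-dimensional slice inequality
\[
\int |\widetilde F(u,v)|\,\textnormal d v \;\leqslant\; \int |\widetilde G(u,v)|\,\textnormal d v ,
\]
valid for every admissible $u$. Integrating this in $u$ and invoking Tonelli's theorem for the nonnegative measurable integrands (the iterated integral equals the double integral) gives $\int_{\Omega_\alpha^\ast} |\widetilde F|\,\textnormal d u\,\textnormal d v \leqslant \int_{\Omega_\alpha^\ast} |\widetilde G|\,\textnormal d u\,\textnormal d v$; the area rule then returns the per-domain inequality on $\Omega_\alpha$, and summation over $\alpha$ finishes the proof.

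The crux is the clean passage from the whole domain to its one-dimensional slices, and the honest obstacles are organisational rather than estimative. They are: (i) the Strebel structure theory must be invoked to guarantee that the natural parameter really does map each $\Omega_\alpha$ onto the stated Euclidean model with vertical trajectories going to vertical segments --- this is precisely why the standing assumption in \eqref{ConfigurationDomains} restricts to strip and circular domains; (ii) although $\sqrt{\mathcal H}$, and hence $\Phi$, is only two-valued (and, on a circular domain, genuinely multivalued), the quantities that actually enter --- $|\mathcal H|$ in the area rule and $\sqrt{|\mathcal H|}$ in the length rule --- are single-valued, and the monodromy of $\Phi$ is a Euclidean isometry, so the pulled-back Lebesgue measure on $\Omega_\alpha^\ast$ is well defined and Tonelli applies without ambiguity; and (iii) in the circular case one must match the periodic coordinate $v \in \mathbb R/c\mathbb Z$ of the cylinder with a single traversal of the closed trajectory, so that the slice integral over $\gamma_u$ is exactly the integral over one period $0 \leqslant v < c$. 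Each of these is routine within the framework already set up, so the proof is essentially a conformal change of variables followed by Fubini--Tonelli.
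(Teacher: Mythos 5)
Your proposal is correct and follows essentially the same route as the paper: reduce to a single configuration domain, straighten it with the distinguished parameter $\Phi(z)=\int\sqrt{\mathcal H(z)}\,\textnormal{d}z$ (respectively its exponential in the circular case), convert the area and length elements, and apply Fubini--Tonelli to integrate the slice inequality across the foliation. The paper itself remarks that the Proposition ``reduces to Fubini's Theorem upon a conformal change of variables,'' which is exactly your argument; your flat-cylinder model for the circular case is an equivalent (and if anything slightly cleaner) packaging of the paper's annulus picture.
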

\begin{remark}
This Proposition  reduces to Fubini's Theorem upon a conformal change of variables in both the line and the area integrals.
\end{remark}
\begin{proof} Since the set $\,\Omega \subset \Omega\,$ is a disjoint union of configuration domains in  which the line inequalities (\ref{LineInequalities})  hold, the problem reduces equivalently to showing that
\begin{equation}\label{AreaWithAlpha}
\int_{\Omega_\alpha} |F(z)|\, |\mathcal H(z) |\,\textnormal{\textbf{d}} z\; \leqslant  \;\;\int_{\Omega_\alpha} |G(z)|\, |\mathcal H(z) |\,\textnormal{\textbf{d}} z\;,\;\;\textnormal{for every}\; \alpha .
\end{equation}
\textbf{Case 1.} \textit{$\,\Omega_\alpha\,$ is a strip domain}. The so-called distinguished parameter $\,\Phi(z) \bydef \int  \sqrt{\mathcal H(z)} \,\textnormal d z\,$ defines a conformal transformation of $\,\Omega_\alpha\,$ onto Euclidean strip $\, \Phi(\Omega_\alpha)\,\subset \mathbb C\,$ which is swept out by straight line vertical segments, say $\,\Gamma_t = \{ w \in \Phi(\Omega_\alpha) \,;\; \Re e\, w  =  t\,\}\,$ for $\,r < t < R\,$, see Figure \ref{Figure2}.

The  area element $\, \textbf{d} z\,$ upon  the transformation  $\, z = \Phi^{-1}(w)\,$, reads as $\, \textnormal d w = \; |\Phi'(z) |^2 \textnormal d z\,$ where  $\, w \in \Phi(\Omega_\alpha)\,$
 Accordingly, we have

\begin{equation}\label{ChainOfEqualities}
\begin{split} & \quad \quad \quad \quad\int_{\Omega_\alpha} |F(z)|\, |\mathcal H(z) |\,\textnormal{\textbf{d}} z\; = \int_{\Omega_\alpha} |F(z)|\, |\Phi'(z) |^2\,\textnormal{\textbf{d}} z\; \;\; = \\& = \;\int_{\Phi(\Omega_\alpha)} | F(\Phi^{-1}(w) ) | \,\textnormal{\textbf{d}} w\,  \;=\; \int_r^R \left( \int_{\Gamma_t} | F(\Phi^{-1}(w) ) | \,|\textnormal{d} w| \right)\; \textnormal d t   = \\& = \int_r^R \left(\int_{\gamma_t}  | F(z) |  \sqrt{| \Phi'(z) |} \, |\textnormal dz|  \right)  \textnormal d t   = \int_r^R \left(\int_{\gamma_t}  | F(z) |  \sqrt{| \mathcal H(z) |} \, |\textnormal dz|  \right)  \textnormal d t
\end{split}
\end{equation}
Here $\,\gamma_t\,\bydef \Phi^{-1}(\Gamma_t) \subset \Omega_\alpha\,$ is one of the vertical trajectory arcs in $\,\Omega_\alpha\,$ with endpoints in $\,\partial \Omega\,$. By virtue of (\ref{LineInequalities}), if one replaces $\,F\,$ with $\,G\,$ in the  line integral over $\,\gamma_t\,$ it will increase the integral. Then, upon such replacement, we return to the area integral for $\,G\,$ by reversing the sequence of the  identities in (\ref{ChainOfEqualities}). This results in the desired inequality (\ref{AreaWithAlpha}).\\

\textbf{Case 2.} \textit{$\,\Omega_\alpha\,$ is a circular domain}. The proof goes through in much the same way as for the strip domains. In this case, however,  $\,\Omega_\alpha\,$ is swept out by closed vertical trajectories $\,\gamma \subset \Omega_\alpha\,$. They have  the same $\,\mathcal H\,$-length  $\,\ell \bydef \int_\gamma \sqrt{|\mathcal H(z)|}\;| \textnormal d z |\, = \pm i \int_\gamma \sqrt{\mathcal H(z)}\; \textnormal d z \, $. Here we choose a continuous branch of $\,\sqrt{\mathcal H(z)}\,$ in $\,\Omega_\alpha \setminus \mathcal C\,$, where $\,\mathcal C \,$ is a horizontal cut of $\,\Omega_\alpha\,$. This gives us  a conformal transformation
$$  \Phi(z)  \bydef \exp \left(\frac{2\pi}{\ell} \int \sqrt{\mathcal H(z)}\,  \textnormal d z\right) \,$$
of $\,\Omega_\alpha\,$ onto an annulus swept out by concentric circles, say
$$ \Phi(\Omega_\alpha)   =  \bigcup_{r<t<R} \Gamma_t\;,\;\textnormal{where}\; \Gamma_t  = \{ w \in \mathbb C\,;\, |w| = t\; \}  $$
The rest of the proof runs as in (\ref{ChainOfEqualities}) with hardly any changes.
\end{proof}

\section{Proof of Theorem  \ref{FullDirichletPrinciple}}
We follow analysis similar to that in \cite{MonHopfHar}.
Let $\,h : \Omega \rightarrow \mathbb C \,$ be a mapping of Sobolev class  $\,\mathscr W^{1,2} (\Omega)\,$. For the moment  both $\,\Omega\,$ and $\,h\,$ are arbitrary, to be specified later. Consider a diffeomorphism $  \,f : \Omega \onto \Omega\,$ and the corresponding inner variation of $\,h\,$  defined by the rule
$$ H(w)  =  h(f^{-1}(w)\, \textnormal {; equivalently,} \; H(w)  =  h(z) \;,\textnormal{for} \; w = f(z)\;$$

\begin{lemma}\label{MainInequality} We have the following inequality:
\begin{equation}
\begin{split} \mathscr E[H]  - \mathscr E[h]  & \;\; = \\& = \; 2\int_\Omega \left [ \frac{| f_z \,-\, \frac{\varphi}{|\varphi|}\, f_{\bar z}\; |^2}{| f_z|^2 \;-\; |f_{\bar z} |^2}   \; -\; 1 \right ] \,| \varphi | \, \textbf d z \,+ \\& + 2\int_\Omega  \frac{\left (| h_z |\,-\, |\, h_{\bar z}\; |\right )^2\; |f_{\bar z} |^2}{| f_z|^2 \;-\; |f_{\bar z} |^2}  \, \, \textbf d z \,+ \\& \geqslant \; 2\int_\Omega \left [ \frac{| f_z \,-\, \frac{\varphi}{|\varphi|}\, f_{\bar z}\; |^2}{| f_z|^2 \;-\; |f_{\bar z} |^2}   \; -\; 1 \right ] \,| \varphi | \, \textbf d z
\end{split}
\end{equation}
where $\,  \varphi \bydef h_z \overline{h_{\bar z}}   \in \mathscr L^1(\Omega) \,$ is a Hopf type product (not necessarily holomorphic).
By a convention,   $\, \frac{\varphi}{|\varphi|} \bydef 0\,$ at the points where $\,\varphi = 0\,$.
\end{lemma}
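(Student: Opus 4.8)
The plan is to recognize that $H = h\circ f^{-1}$ is precisely the inner variation governed by the master formula (\ref{ExpansionFormula}), and then to extract the stated decomposition by an elementary completing-of-the-square. Writing $H(w) = h(z)$ with $z = f^{-1}(w)$, one reads (\ref{ExpansionFormula}) with $z$ now playing the role of the $h$-variable (the old $\xi$), $w$ playing the role of the new variable, and the diffeomorphism $w = f(z)$ playing the role of the old map $z = z(\xi)$. Under this dictionary the inverse-map derivatives become $f_z$ and $f_{\bar z}$, the Jacobian becomes $J_f = |f_z|^2 - |f_{\bar z}|^2 > 0$ (positive because $f$ is an orientation-preserving diffeomorphism), and $h_\xi, h_{\bar \xi}$ become $h_z, h_{\bar z}$. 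Thus (\ref{ExpansionFormula}) collapses to the single area integral
\[
\mathscr E[H] - \mathscr E[h] = 2\int_\Omega \frac{\big(|h_z|^2 + |h_{\bar z}|^2\big)\,|f_{\bar z}|^2 \;-\; 2\,\re\big(\varphi\, f_{\bar z}\,\overline{f_z}\big)}{|f_z|^2 - |f_{\bar z}|^2}\,\textbf{d} z,
\]
with $\varphi = h_z\overline{h_{\bar z}}$ and $|\varphi| = |h_z|\,|h_{\bar z}|$. The existence of this integral is guaranteed by $\varphi \in \mathscr L^1(\Omega)$ together with the positivity of $J_f$.

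The heart of the matter is then a pointwise algebraic identity for the numerator $N \bydef (|h_z|^2 + |h_{\bar z}|^2)|f_{\bar z}|^2 - 2\re(\varphi f_{\bar z}\overline{f_z})$. I would set $\mu \bydef \varphi/|\varphi|$, with the convention $\mu = 0$ where $\varphi = 0$, so that $|\mu| = 1$ off the zero set of $\varphi$ and, crucially, $|\varphi|\,\mu = \varphi$ everywhere. Expanding $|f_z - \mu f_{\bar z}|^2 = |f_z|^2 - 2\re(\mu f_{\bar z}\overline{f_z}) + |f_{\bar z}|^2$ and subtracting $J_f$ gives $|f_z - \mu f_{\bar z}|^2 - J_f = 2|f_{\bar z}|^2 - 2\re(\mu f_{\bar z}\overline{f_z})$; multiplying by $|\varphi|$ and using $|\varphi|\mu = \varphi$ turns this into $2|f_{\bar z}|^2|\varphi| - 2\re(\varphi f_{\bar z}\overline{f_z})$. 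Adding $(|h_z| - |h_{\bar z}|)^2|f_{\bar z}|^2$ and invoking $|\varphi| = |h_z||h_{\bar z}|$, the cross terms $\pm 2|h_z||h_{\bar z}||f_{\bar z}|^2$ cancel and one recovers exactly $N$. This is the claimed identity, that is, the first displayed equality of the lemma.

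With the identity established the inequality is immediate: the second integrand,
\[
\frac{(|h_z| - |h_{\bar z}|)^2\,|f_{\bar z}|^2}{|f_z|^2 - |f_{\bar z}|^2},
\]
is a ratio of a nonnegative quantity over the strictly positive Jacobian $J_f$, hence nonnegative, and discarding it can only decrease the right-hand side. I expect the only genuinely delicate points to be bookkeeping rather than conceptual. First, one must correctly identify the inverse-map derivatives in the dictionary, so that $f_z, f_{\bar z}$ (and not the derivatives of $f^{-1}$) appear in the denominator $J_f$; this is exactly the transformation rule $\textbf{d} z = \textbf{d}\xi/J_\xi$ used in deriving (\ref{ExpansionFormula}). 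Second, one must verify that the completing-the-square identity survives on the measure-zero set $\{\varphi = 0\}$ under the convention $\varphi/|\varphi| = 0$. For the latter I would simply check directly that where $\varphi = 0$ one has $h_z = 0$ or $h_{\bar z} = 0$; in either case the first integrand's numerator vanishes (being proportional to $|\varphi| = 0$) while $(|h_z| - |h_{\bar z}|)^2|f_{\bar z}|^2$ equals $N$, so the identity persists almost everywhere and the integral decomposition is valid.
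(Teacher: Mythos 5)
Your proposal is correct and follows essentially the same route as the paper: the paper rederives the single-integral formula by applying the chain rule to $H=h\circ f^{-1}$ and substituting $w=f(z)$ (which is exactly how (\ref{ExpansionFormula}) was obtained in Section \ref{Section3}), and then leaves the completing-the-square identity to the reader as a ``routine computation.'' You simply cite (\ref{ExpansionFormula}) directly and carry out that algebra explicitly, including the correct check on the set $\{\varphi=0\}$, so the argument matches the paper's in substance.
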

\begin{proof}
We begin with the inverse map $\,f^{-1} : \Omega \onto \Omega\,$ and its complex derivatives.
$$ \,\frac{\partial f^{-1}(w)}{\partial w}  = \frac{\overline{f_z(z)}}{J_f(z)}\;\;  ,\;\; \frac{\partial f^{-1}(w)}{\partial \overline{w}}  = \frac{ -\; f_{\overline{z}}(z)}{J_f(z)}\;,\;\textnormal{where}\;            z = f^{-1}(w)\,$$
  Using chain rule we obtain
\begin{equation}
\begin{split} &H_w(w)  =  h_z(z) \,\frac{\partial f^{-1}(w)}{\partial w}\; +\; h_{\bar z}(z) \,\frac{\overline{\partial f^{-1}(w)}}{\partial \bar w}\; =\; \frac{h_z\,\overline{f_z}\,-\,h_{\bar z}\,\overline{f_{\bar z}}  }{J_f(z)}\\& H_{\bar w} (w)  =  h_z(z) \,\frac{\partial f^{-1}(w)}{\partial \bar w}\; +\; h_{\bar z}(z) \,\frac{\overline{\partial f^{-1}(w)}}{\partial  w}\,=\; \frac{h_{\bar z}\,\overline{f_z}\,-\,h_z\,f_{\bar z}  }{J_f(z)}
\end{split}
\end{equation}
Hence
\begin{equation}
\begin{split} & \mathscr E[H] \bydef  \int_\Omega \left (\, |H_w|^2  +  |H_{\bar w} |^2  \,\right )\, \textbf d w \, = \\& =
 \int_\Omega \frac{|\,h_z\,\overline{f_z}\,-\,h_{\bar z}\,\overline{f_{\bar z}}\,|^2 \; + \; |\,h_{\bar z}\,\overline{f_z}\,-\,h_z\,f_{\bar z}  \,|^2}{J_f(z) } \textbf d z
\end{split}
\end{equation}
Here we have made a substitution $ w = f(z)\,$, so $\,\textbf d w  = J_f(z) \, \textbf d z\,$. Recall the energy formula for $\,h\,$,  $\,\mathscr E[h]  =    \int _\Omega \left (  |h_z |^2 \; + |\, h_{\bar z} |^2 \right )\,\textbf d z  \,$. Therefore,

\begin{equation}
\begin{split}
&\quad\quad\quad\quad \mathscr E[H] \;-\mathscr E[h] \; =\\&
 \int_\Omega \frac{|\,h_z\,\overline{f_z}\,-\,h_{\bar z}\,\overline{f_{\bar z}}\,|^2 \; + \; |\,h_{\bar z}\,\overline{f_z}\,-\,h_z\,f_{\bar z}  \,|^2\,- \left (  |h_z |^2 \; + |\, h_{\bar z} |^2 \right )\, \left (  |f_z |^2 \; -\, |\, f_{\bar z} |^2 \right ) }{J_f(z) } \textbf d z
\end{split}
\end{equation}
We leave it to the reader a routine computation that leads to the desired formula

\begin{equation}
\begin{split} \mathscr E[H]  - \mathscr E[h]  & \;\; = \\& = \;2 \int_\Omega \left [ \frac{| f_z \,-\, \frac{\varphi}{|\varphi|}\, f_{\bar z}\; |^2}{| f_z|^2 \;-\; |f_{\bar z} |^2}   \; -\; 1 \right ] \,| \varphi | \, \textbf d z \,+ \\& +2 \int_\Omega  \frac{\left (| h_z |\,-\, |\, h_{\bar z}\; |\right )^2\; |f_{\bar z} |^2}{| f_z|^2 \;-\; |f_{\bar z} |^2}  \, \, \textbf d z
\end{split}
\end{equation}
Hence
\begin{equation}\label{EHminusEh}
\mathscr E[H]  - \mathscr E[h]  \; \geqslant \; \;2 \int_\Omega\, \frac{|\, f_z \,-\, \frac{\varphi}{|\varphi|}\, f_{\bar z}\; |^2}{| f_z|^2 \;-\; |f_{\bar z} |^2}  \, \,| \varphi(z) | \, \textbf d z   \;-\; 2 \int_\Omega  \,| \varphi(z) | \, \textbf d z
\end{equation}
This ends the proof of Lemma \ref{MainInequality}.
\end{proof}

Next, using H\"{o}lder's inequality,  we  estimate the first integral in the right hand side of (\ref{EHminusEh}).

\begin{equation} \label{LowerBound}
\begin{split}
&\int_\Omega\, \frac{|\, f_z \,-\, \frac{\varphi}{|\varphi|}\, f_{\bar z}\; |^2}{| f_z|^2 \;-\; |f_{\bar z} |^2}  \, \,| \varphi(z) | \, \textbf d z  \geqslant  \\& \left[\int_\Omega | \varphi ( w) | \,\textbf d w \right]^{-1}\left(\int_\Omega |\, f_z \,-\, \frac{\varphi}{|\varphi|}\, f_{\bar z}\; |\;  \sqrt{|\varphi(z)| } \; \sqrt{|\varphi (f(z)) |}\; \textbf d z\right)^2
\end{split}
\end{equation}

Here is a simple direct computation for this.

\begin{equation}
\begin{split}
& \int_\Omega |\, f_z \,-\, \frac{\varphi}{|\varphi|}\, f_{\bar z}\; |\;  \sqrt{|\varphi(z)| } \; \sqrt{|\varphi (f(z)) |}\; \textbf d z  =\\ &
 =\int_\Omega \frac{|\, f_z \,-\, \frac{\varphi}{|\varphi|}\, f_{\bar z}\; |\;  \sqrt{|\varphi(z)| }}{\sqrt{J_f(z)} } \; \sqrt{J_f(z)\, |\varphi (f(z)) |}\; \textbf d z \\ &
 \leqslant \left [\int_\Omega \frac{|\, f_z \,-\, \frac{\varphi}{|\varphi|}\, f_{\bar z}\; |^2\;  |\varphi(z)| }{J_f(z) } \;  \textbf d z\;\right]^\frac{1}{2}\;\; \left[ \int_\Omega J_f(z)\, |\varphi (f(z)) |\; \textbf d z\,\right]^\frac{1}{2}\\ & =
\left[\int_\Omega\, \frac{|\, f_z \,-\, \frac{\varphi}{|\varphi|}\, f_{\bar z}\; |^2}{| f_z|^2 \;-\; |f_{\bar z} |^2}  \, \,| \varphi(z) | \, \textbf d z \right]^\frac{1}{2}\;\; \left[\int_\Omega | \varphi ( w) | \,\textbf d w \right]^\frac{1}{2} \end{split}
\end{equation}
Whence the estimate (\ref{LowerBound}) is readily inferred.\\

  For the proof of Theorem \ref{FullDirichletPrinciple} we need the following identity.
\begin{lemma} \label{ChangeInLineIntegral}
Let $\,\varphi = \mathcal H \not \equiv 0\,$ be any holomorphic function and $\, \gamma \subset \Omega\,$ a vertical trajectory arc of $\,\mathcal H(z)\, \textnormal d z \otimes \textnormal d z\,$ . Then for every diffeomorphism $\, f : \Omega \onto \Omega\,$ it holds that
\begin{equation}\label{ChangOfVariables1}
\int_\gamma |\, f_z \,-\, \frac{\varphi}{|\varphi|}\, f_{\bar z}\; | \; \sqrt{|\varphi (f(z)) |}\; |\textnormal d z| = \int_{f(\gamma)}\sqrt{ |\varphi(w) |}\, |\textnormal d w | \bydef  | f(\gamma) |_\varphi
\end{equation}
\end{lemma}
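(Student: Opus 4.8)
The plan is to reduce the stated identity to a single pointwise computation: to express the arc-length element $|\mathrm d w|$ of the image curve $f(\gamma)$ in terms of the arc-length element along $\gamma$. The whole content of the lemma is that along a \emph{vertical} trajectory the tangent direction is rigidly prescribed by $\varphi$, and it is precisely this rigidity that manufactures the combination $f_z-\frac{\varphi}{|\varphi|}f_{\bar z}$ out of the Jacobian data of $f$.

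First I would parametrize $\gamma$ by a smooth curve $t\mapsto z(t)$ and record the defining property of a vertical trajectory of $\varphi(z)\,\mathrm d z\otimes\mathrm d z$, namely $\varphi(z)\,z'(t)^2<0$ along $\gamma$. Since $\gamma$ is a noncritical trajectory arc, $\varphi\neq 0$ on $\gamma$, so $\varphi/|\varphi|$ is a genuine unimodular factor there (the convention $\varphi/|\varphi|\bydef 0$ at zeros of $\varphi$ is never invoked). Writing $\varphi=|\varphi|e^{i\alpha}$ and $z'(t)=|z'(t)|e^{i\theta}$, the condition $\alpha+2\theta\equiv\pi\pmod{2\pi}$ gives the tangent relation
\[
\frac{\overline{z'(t)}}{z'(t)}=e^{-2i\theta}=-\frac{\varphi}{|\varphi|}\qquad\textnormal{along }\gamma .
\]

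Next I would differentiate $w(t)=f(z(t))$ along $\gamma$ and substitute this relation:
\[
w'(t)=f_z\,z'(t)+f_{\bar z}\,\overline{z'(t)}=\left(f_z+f_{\bar z}\,\frac{\overline{z'(t)}}{z'(t)}\right)z'(t)=\left(f_z-\frac{\varphi}{|\varphi|}f_{\bar z}\right)z'(t),
\]
so that, on taking moduli, $|w'(t)|=\bigl|f_z-\frac{\varphi}{|\varphi|}f_{\bar z}\bigr|\,|z'(t)|$. Finally, since $f$ is a diffeomorphism the image $f(\gamma)$ is the rectifiable arc parametrized by $w(t)$, and the standard change of variables for the arc-length line integral, together with $\varphi(f(z))=\varphi(w)$, yields
\[
\int_{f(\gamma)}\sqrt{|\varphi(w)|}\,|\mathrm d w|=\int_\gamma\sqrt{|\varphi(f(z))|}\,\left|f_z-\frac{\varphi}{|\varphi|}f_{\bar z}\right|\,|\mathrm d z|,
\]
which is exactly the asserted identity $\int_\gamma\bigl|f_z-\frac{\varphi}{|\varphi|}f_{\bar z}\bigr|\sqrt{|\varphi(f(z))|}\,|\mathrm d z|=|f(\gamma)|_\varphi$.

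The only real obstacle is bookkeeping rather than substance. One must be careful to carry out the tangent-direction computation for the honest derivative $z'(t)$ of a parametrization (rather than manipulating the formal differential $\mathrm d\bar z=\overline{\mathrm d z}$), and to confirm that $\varphi\neq 0$ along $\gamma$ so that $\varphi/|\varphi|$ is the genuine unimodular direction factor; both are guaranteed by $\gamma$ being a noncritical vertical trajectory arc. No holomorphy of $f$ is used, and the argument is local along $\gamma$, so it applies verbatim whether $\gamma$ is a crosscut or a closed vertical trajectory, which is what will be needed when feeding this into the length-area Proposition~\ref{LengthAreaIneq}.
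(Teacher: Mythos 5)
Your proof is correct and follows essentially the same route as the paper: the paper substitutes $w=f(z)$ and invokes the relation $\textnormal d\bar z=-\frac{\varphi}{|\varphi|}\,\textnormal d z$ along a vertical trajectory, which is exactly the tangent identity $\overline{z'(t)}/z'(t)=-\varphi/|\varphi|$ that you derive from $\varphi(z(t))[z'(t)]^2<0$. Your version merely spells out the parametrized computation that the paper leaves implicit.
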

\begin{proof}
We use the arc-length parametrization of $\,\gamma\,$, $\, \gamma = \{ z(t) ; \,a < t < b ,\; |{\dot z}(t)|  \equiv 1\,\}\,$. Upon a substitution  $\,w = f(z)\,$ in the line integral over $\,f(\gamma)\,$ , we obtain,
$$
\int_{f(\gamma)}\sqrt{ |\varphi(w) |}\, |\textnormal d w |  =   \int_\gamma  \; \sqrt{|\varphi (f(z)) |}\; |\textnormal d f(z)|  =  \int_\gamma  \; \sqrt{|\varphi (f(z)) |}\; | f_z \textnormal dz  +  f_{\bar z} \textnormal d{\bar z}\,|
$$
Recall the relation $\,\textnormal d{\bar z} = - \frac{\varphi(z)}{| \varphi(z) |}\,\textnormal d{z}\,$  along any vertical trajectory, in which $\, \varphi(z(t))\, [\dot{z}(t) ]^2  < 0\,$. This results in (\ref{ChangOfVariables1}), completing the proof of Lemma \ref{ChangeInLineIntegral}.
\end{proof}

From now on $\,\gamma\,$ will  any noncritical trajectory of $\,\phi(z)\,\textnormal dz \otimes \textnormal{d} z\,,\; \phi = \mathcal H\,$. We shall appeal to  the theorems   listed in Section \ref{SimplyConnectedTheorems}. \\
First, Theorem 15.1 in \cite{Strebel} (page 74) tells us that $\,\gamma\,$ is a cross-cut connecting two different boundary points. Since the diffeomorphism $\, f : \Omega \onto \Omega\,$  equals the identity map near $\,\partial \Omega\,$ the arcs $\,\gamma\,$ and $\,f(\gamma)\,$ coincides near the boundary. By Theorem 16.1 in \cite{Strebel} (page 75), the $\mathcal H\,$-length of $\,f(\gamma)\,$ is larger (or equal)  than the $\mathcal H\,$-length of $\,\gamma\,$. In symbols,
\begin{equation}\label{LengthInequality1}
| f(\gamma)|_{_\phi} \; \geqslant | \gamma|_{_\phi}  =  \;\int _\gamma \sqrt{|\phi(z)|} \, | \textnormal d z|
\end{equation}
Now Lemma \ref{ChangeInLineIntegral} gives the inequality \\
$$
\;\int _\gamma \sqrt{|\phi(z)|} \, | \textnormal d z|  \leqslant \int_\gamma |\, f_z \,-\, \frac{\varphi}{|\varphi|}\, f_{\bar z}\; | \; \sqrt{|\varphi (f(z)) |}\; |\textnormal d z|
$$
Next the length-area inequalities (\ref{LineInequalities}) and (\ref{AreaInequalities})   combined give
$$
 \int_\Omega |F(z)|\, |\phi(z) |\, \textbf{d} z \leqslant \int_\Omega |G(z)|\, |\phi(z) | \, \textbf{d} z
$$
where $\,F(z) \equiv 1\,$ and
$$
 G(z) =  |\, f_z \,-\, \frac{\varphi}{|\varphi|}\, f_{\bar z}\; | \; \sqrt{|\varphi (f(z)) |}\,\; \text{\huge$/$}\;\, \sqrt{|\phi(z)|}
$$
This reads as:
\begin{equation}\label{AreaInequality}
\int_\Omega |\, f_z \,-\, \frac{\varphi}{|\varphi|}\, f_{\bar z}\; |\;  \sqrt{|\varphi(z)| } \; \sqrt{|\varphi (f(z)) |}\; \textbf d z\;\geqslant \; \int_\Omega | \varphi ( w) | \,\textbf d w
\end{equation}
Substituting this into (\ref{LowerBound}), in view of ({\ref{EHminusEh}}),  we conclude with  the desired inequality $\,\mathscr E[H]  -  \mathscr E[h]  \geqslant  0\,$.

\section{Dirichlet Principle in Multiply Connected Domains}\label{sec:mcd}

\begin{theorem}[Dirichlet Principle for Multiply Connected Domains] \label{MultiplyConnectedPrinciple}
 Suppose that a Hopf holomorphic differential $\,h_z \overline{h_{\bar z}} \,$  for $\,h \in \mathscr W^{1,2} (\Omega)\,$ is of a Strebel type.  Then every nontrivial inner variation  of $\,h\,$ increases its energy.
 \end{theorem}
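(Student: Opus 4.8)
The plan is to run the same machinery that proves Theorem~\ref{FullDirichletPrinciple}, now feeding it through the two families of configuration domains guaranteed by the Strebel type hypothesis. Write $\varphi = \mathcal H = h_z\overline{h_{\bar z}}$, let $\xi:\Omega \onto \Omega$ be the diffeomorphism of the (nontrivial) inner variation, set $f := \xi$ and $H(w) = h(f^{-1}(w))$. The first two steps are insensitive to the topology of $\Omega$: Lemma~\ref{MainInequality} already reduces the claim $\mathscr E[H]\ge\mathscr E[h]$ to the single lower bound displayed in \eqref{EHminusEh}, and the H\"older estimate \eqref{LowerBound} further reduces that to the area inequality \eqref{AreaInequality}, namely $\int_\Omega |\,f_z - \tfrac{\varphi}{|\varphi|}f_{\bar z}|\sqrt{|\varphi(z)|}\,\sqrt{|\varphi(f(z))|}\,\textbf d z \ge \int_\Omega |\varphi(w)|\,\textbf d w$. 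So I would first transcribe those reductions verbatim, since they use nothing about connectivity.

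The second step is to convert this global area inequality into a family of one-dimensional inequalities. Here the Strebel type hypothesis is exactly what licenses Proposition~\ref{LengthAreaIneq}: $\Omega$ decomposes up to measure zero into strip domains and circular domains as in \eqref{ConfigurationDomains}, and the Length--Area principle says the area inequality \eqref{AreaInequalities} follows once the line inequality \eqref{LineInequalities} is verified on every vertical trajectory $\gamma$. Taking $F\equiv 1$ and $G = |\,f_z - \tfrac{\varphi}{|\varphi|}f_{\bar z}|\,\sqrt{|\varphi(f(z))|}\big/\sqrt{|\varphi(z)|}$ and invoking Lemma~\ref{ChangeInLineIntegral}, the required line inequality is precisely the length comparison $|\gamma|_\varphi \le |f(\gamma)|_\varphi$, uniformly over the two types of $\gamma$.

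The heart of the proof is therefore to establish $|f(\gamma)|_\varphi \ge |\gamma|_\varphi$ for each vertical trajectory, and this is where the two geometries split. If $\gamma$ lies in a circular domain it is a closed vertical trajectory, i.e.\ a closed geodesic of $\varphi\,\textnormal dz\otimes\textnormal dz$, and Theorem 17.4 of \cite{Strebel} gives the comparison provided $f(\gamma)$ stays in the free homotopy class of $\gamma$. I would secure this from the boundary normalization: since $f$ is the identity near $\partial\Omega$, each peripheral loop (a small circle around a complementary component, pushed into the collar where $f=\id$) is fixed by $f$, so $f_{\ast}$ is the identity on $\pi_1(\Omega)$; free homotopy classes, being conjugacy classes, are then preserved, whence $f(\gamma)\simeq\gamma$ and Theorem 17.4 applies. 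If $\gamma$ lies in a strip domain it is a crosscut with two endpoints on $\partial\Omega$; because $f=\id$ near $\partial\Omega$, the image $f(\gamma)$ joins the same two boundary points and agrees with $\gamma$ on the collar, and the crosscut minimality (Theorem 16.1 of \cite{Strebel}, read inside the distinguished parameter of the strip) yields $|f(\gamma)|_\varphi\ge|\gamma|_\varphi$ as long as $f(\gamma)$ is homotopic to $\gamma$ rel endpoints.

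I expect this strip/crosscut homotopy requirement to be the main obstacle. For a closed trajectory the condition $f_{\ast}=\id$ on $\pi_1$ suffices, but for a crosscut $f$ may act like a Dehn twist around a complementary component, carrying $\gamma$ into a genuinely different relative homotopy class (a spiralling arc with the same endpoints), and then $\gamma$ need not be the $\varphi$-shortest competitor. Controlling this is exactly the point where the unresolved global structure of trajectories --- recurrence and winding --- enters, and it is why the theorem is stated under the Strebel/leminiscate hypothesis rather than for arbitrary multiply connected $\Omega$: the leminiscate configuration is meant to force every crosscut fragment to sit in a prescribed homotopy class that $f$ cannot alter. Once all the line inequalities are in hand, Proposition~\ref{LengthAreaIneq} delivers \eqref{AreaInequality} and hence $\mathscr E[H]\ge\mathscr E[h]$. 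For the strict increase claimed (the energy \emph{increases}) I would run the backwards analysis of Section~\ref{Section8}: equality throughout forces equality in each length comparison, pinning $f(\gamma)$ to the trajectory $\gamma$ for a.e.\ $\gamma$, which together with $f=\id$ near $\partial\Omega$ and the argument of Theorem~\ref{StrictInequality} collapses $f$ to the identity, contradicting nontriviality.
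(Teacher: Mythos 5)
Your route is exactly the paper's: its entire proof of Theorem \ref{MultiplyConnectedPrinciple} is a four-sentence remark that the argument of Theorem \ref{FullDirichletPrinciple} (Lemma \ref{MainInequality}, the H\"older step \eqref{LowerBound}, Proposition \ref{LengthAreaIneq}) goes through verbatim, with the length inequality \eqref{LengthInequality1} supplied by Strebel's Theorem 17.4 for closed trajectories in circular domains and by the old crosscut argument on strips. Two comments on your justifications. For closed trajectories, your inference that $f_{*}=\mathrm{id}$ on $\pi_1(\Omega)$ because peripheral loops in the collar are fixed is not valid: fixing representatives of the generating conjugacy classes does not make the induced automorphism trivial, or even inner. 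What you actually need, and what is true, is weaker: a closed vertical trajectory is a \emph{simple} closed curve; $f$ extends by the identity to a homeomorphism of $\widehat{\mathbb C}$ fixing each complementary component pointwise, hence preserving winding numbers about those components; and essential simple closed curves in a planar domain enclosing the same complementary components are freely homotopic, so Theorem 17.4 applies to $f(\gamma)$. For crosscuts, the obstacle you flag --- that $f(\gamma)$ may leave the rel-endpoint homotopy class of $\gamma$, while Strebel's Theorem 16.1 is a statement about simply connected domains --- is genuine, and the paper does not address it either: it simply asserts that the strip estimates are ``exactly the same.'' Naming this gap without closing it leaves your proof, like the paper's, incomplete at precisely this one point; note also that neither your chain of inequalities nor the paper's yields the \emph{strict} increase asserted in the statement, and your appeal to the backwards analysis of Section \ref{Section8} (which concerns the second-order term, not the length--area argument) would need to be reworked to extract strictness here.
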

\begin{proof}
 The arguments are essentially the same as presented in the proof of Theorem \ref{FullDirichletPrinciple}. The estimates over a strip domains $\Omega_\alpha\,$ are exactly the same. If, however,  $\,\Omega_\alpha\,$ is a circular domain and $\,\gamma \subset \Omega_\alpha\,$ is a closed trajectory,  we still have the desired  length inequality (\ref{LengthInequality1}). The rest of the proof runs in  the same way.
\end{proof}
\subsection{Illustrations of Theorem \ref{MultiplyConnectedPrinciple}  by hyperelliptic trajectories}
The term hyperelliptic quadratic diffferential refers to a meromorphic quadratic differential on the Riemann sphere $\,\widehat{\mathbb C}\,$, see \cite{JenkinsSpencer}.
\subsubsection{Leminiscate}
Consider a quadratic differential
\begin{equation}
\mathcal H(z) \,\textnormal d z \otimes \textnormal d z\; =\; \left ( \frac{z}{1 - z^2} \right )^2 \textnormal d z \otimes \textnormal d z \;,\;\; z \not =\pm 1
\end{equation}
Thus $\,\mathcal H\,$ has one critical point (double zero at $\,z = 0\,$) and two double poles at $\, z = \pm 1\,$.  To every parameter $\, 0 < r \leqslant 1\,$ there corresponds a closed vertical trajectory around the pole at $\,+1\,$.
 \begin{equation}
 z(t)  =  \sqrt{1 \, +\, r^2 e^{4it}}\;,\;\;\; \textnormal{where}\;\, -\frac{\pi}{4} \leqslant  t \leqslant \frac{\pi}{4}
 \end{equation}
 Here the continuous branch of the square root is chosen to make $\, z(0) =  \sqrt{1 + r^2} \,$. Indeed, we have
 $$
 \mathcal H(z(t))\,[ \dot{z}(t) ]^2 =  \frac{1}{4} \left[ \frac{\textnormal d z^2(t) / \textnormal d t}{1 - z^2(t)} \right]^2\; \equiv\,- 4  < 0
 $$
 The borderline case $\, r = 1\,$ results in a closed geodesic curve passing through the critical point $\, z(\frac{\pi}{4} )  \,=\, z(\frac{- \pi}{4}\,) =\, 0\,$. In fact this is the right-half portion of a leminiscate,  $\,z(t) = \sqrt{2 \cos 2t} \; e^{it}\,$, see Figure \ref{FigureLT}.  Changing the sign of the square root gives us closed trajectories around the pole at $\-1\,$. In particular, the borderline case $\,r = 1\,$ results in the left-half portion of the leminiscate.\\
 There are also closed trajectories surrounding both poles. To every $\; R > 1\,$ there corresponds a closed trajectory:
 \begin{equation}
 z(t)\,=\, F(R e^{it} )\;, \, 0 \leqslant t \leqslant 2 \pi, \;\, \,\textnormal{where}\;  F(\xi) = \xi \sqrt{ 1 + \xi^{-2}}\,, \; |\xi| > 1
 \end{equation}
 The continuous branch of square root is chosen to make $\,F(1) =  \sqrt{2}\,$.\\

 Let us restrict  $\,\mathcal H(z) \,\textnormal d z \otimes \textnormal d z\,$  to a bounded domain $\,\Omega\,$ which contains no poles,    $\, \pm 1 \not \in \overline{\Omega}\,$.   Every vertical noncritical trajectory in $\,\Omega\,$  is either a closed Jordan curve or its intersection with $\,\Omega\,$. The latter consists of a number (possibly countable) of cross-cuts. In Figure \ref{FigureLT} the shaded area occupies the domain $\,\Omega\,$ of connectivity 4. Two darker fragments represent ring and strip regions.  Every closed curve $\,\widetilde{\gamma} \subset \Omega\,$ that is homotopic to a closed trajectory $\,\gamma \subset \Omega\,$ around the double pole at 1 has $\,\mathcal H\,$-length $\, |\,\widetilde{\gamma}|_\mathcal H  \geqslant |\gamma\,|_\mathcal H = \pi\,.$

 \begin{figure}[!h]
\begin{center}
\includegraphics*[height=2.0in]{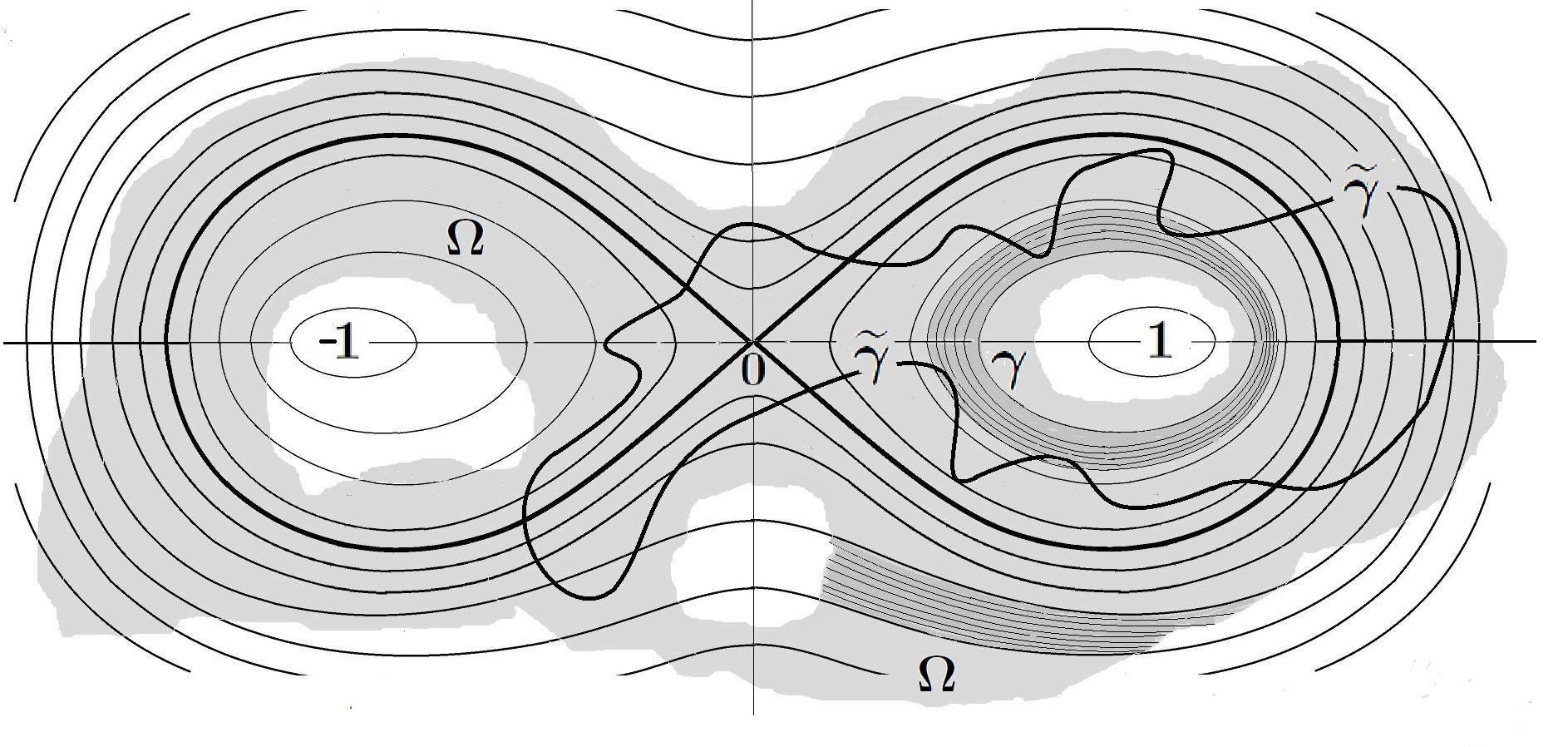}
\caption{Leminiscate as a critical graph \,}\label{FigureLT}
\end{center}
\end{figure}

\subsubsection{Leminiscates with four poles}
Here is another example of a rational quadratic differential with leminiscates as trajectories, see Figure \ref{FourPoles}.
\begin{equation}
 \left [\frac{5}{z-2} \;+\;\frac{5}{z+2}\;+\; \frac{7}{z-4}\;+\; \frac{7}{z+4}\right ]^2\; \textnormal d z \otimes \textnormal d z
\end{equation}

\begin{figure}[!h]
\begin{center}
\includegraphics*[height=2.1in]{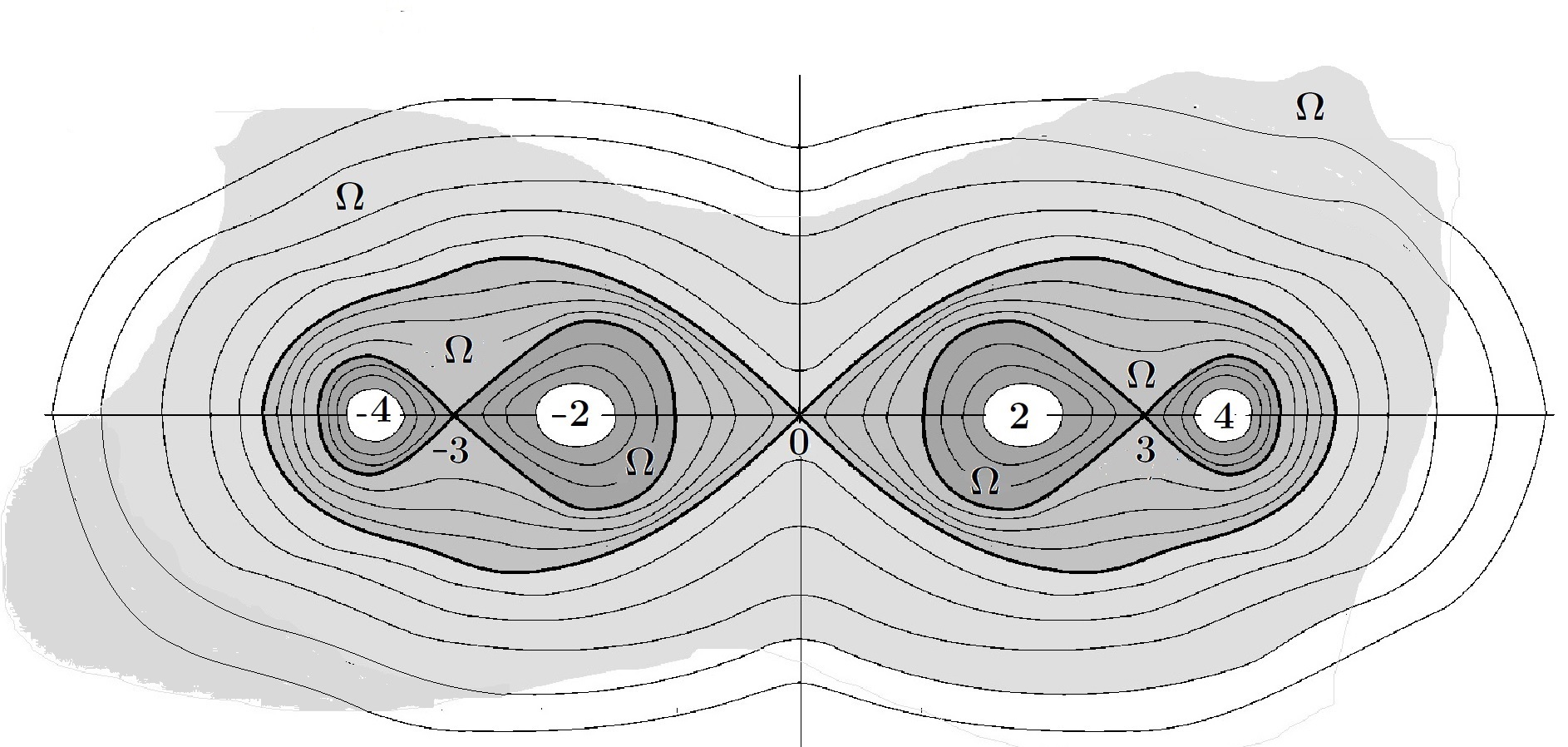}
\caption{Meromorphic  differential with three critical points at $\,0\,$  and $\,\pm 3\,$,  and with four double poles at $\,\pm 2\,$ and $\, \pm 4\,$ .}\label{FourPoles}
\end{center}
\end{figure}
\begin{remark} In the above examples of leminiscates the meromorphic function $\,\mathcal H\,$ admits a continuous square root on $\,\widehat{\mathbb C}\,$. In this case there is a simple direct proof of the minimal length property of closed trajectories  as stated in Theorem 17.4 in \cite{Strebel} (page 52). The proof goes through  as for (\ref{LengthEstimate})\, in two lines with hardly any changes.
\end{remark}

\subsubsection{A hyperelliptic differential having no square root}

Consider the polynomial with roots $\; a_k = \exp \frac{(2k+1)\pi i}{n}\,,\; k = 0, 1, ... , n-1$
$$
z^n + 1 = (z - a_0) (z -a_1) \cdots  (z-a_{n-1} )\;,
$$
Upon differentiation we get the formula,
$$
\frac{n z^{n-1}}{z^n + 1}\; =\; \frac{1}{z-a_0}\;+\;\frac{1}{z - a_1}\;+ \cdots + \frac{1}{z - a_{n-1}}
$$
Second differentiation yields,
$$
\frac{n z^{n-2} \left(z^n -n + 1   \right)}{(z^n + 1 )^2} \; =\; \frac{1}{(z-a_0)^2}\;+\;\frac{1}{(z - a_1)^2}\;+ \cdots + \frac{1}{(z - a_{n-1})^2} \; \bydef \mathcal H(z)
$$
whence  $\,\mathcal H(z) \,\textnormal d z \otimes \textnormal d z \,$ has a critical point of order $\,n-2\,$ at $\,z = 0\,$.  Moreover,  it has $\,n\,$ critical points of order 1 at  $\; z_k = \sqrt[n]{n-1}\,\exp \frac{2 k \pi i}{n}\,,\; k = 0, 1, ... , n-1\,$, see Figure \ref{OrangeSections}.
\begin{figure}[!h]
\begin{center}
\includegraphics*[height=2.1in]{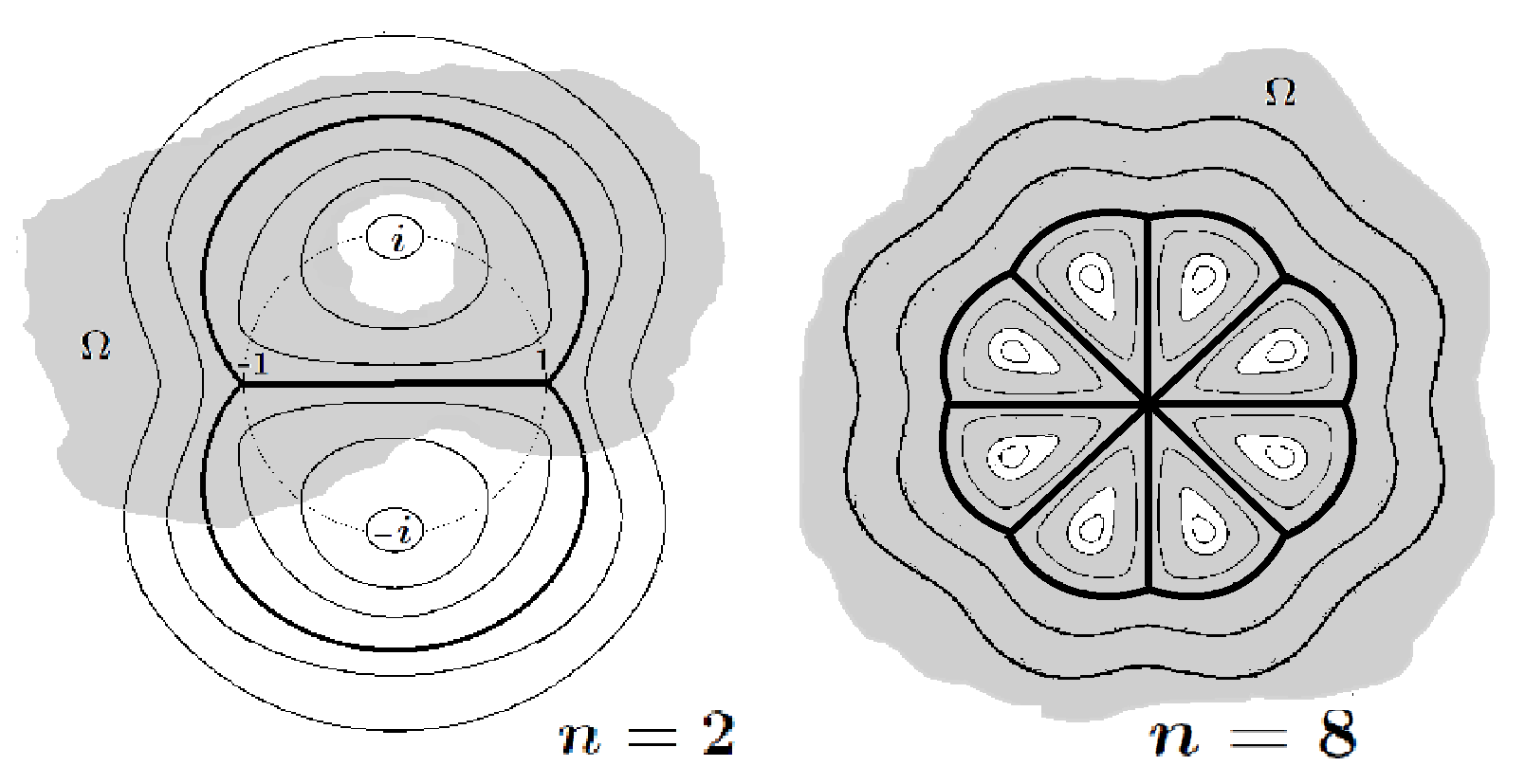}
\caption{Hyperelliptic trajectories with $\,n\,$ double poles and $\,2n-2\,$ critical points counting multiplicity.\label{OrangeSections}}
\end{center}
\end{figure}
Our interest in this example comes from  \cite{IwaniecMichigan}, where certain sharp estimates for hyperelliptic differentials have been established in connection with the area distortion inequality for quasiconformal mappings.

\newpage

\end{document}